\documentclass[12pt,reqno]{amsart}
\usepackage{setspace}
\usepackage[inner=1in,outer=1in,tmargin=1in,bmargin=1in]{geometry}
\usepackage{mathtools}
\usepackage{amssymb}
\usepackage[utf8]{inputenc}
\usepackage[T1]{fontenc}
\usepackage[english]{babel}
\usepackage{parskip}
\usepackage{tensor}
\usepackage{hyperref}
\usepackage{amsmath}
\usepackage{afterpage}
\usepackage{tikz}
\usepackage{tikz-3dplot}
\usetikzlibrary{decorations.markings}
\usetikzlibrary{intersections}
\usetikzlibrary{arrows.meta}
\usepackage{lmodern}
\usepackage{graphicx}
\usepackage[capposition=bottom]{floatrow}
\usepackage{empheq}
\usepackage{amsthm}
\usepackage{lipsum}
\usepackage{cite}
\usepackage{cases}
\usepackage{hyperref}
\usepackage{pgfplots}
\pgfplotsset{compat=1.18}
\usepackage{enumitem}
\usepackage{pdfpages}
\usepackage{slashed}
\usepackage[colorinlistoftodos]{todonotes}
\usepackage{comment}
\usepackage{dsfont}
\usepackage{comment}
\usepackage{mathtools}
\mathtoolsset{showonlyrefs}
\numberwithin{equation}{section}

\allowdisplaybreaks

\hypersetup{
	colorlinks,
	linkcolor={red!80!black},
	citecolor={blue!80!black},
	urlcolor={blue!80!black},
	linktoc=page
}

\usetikzlibrary{decorations.pathmorphing,calc}
\makeatletter

\newcommand{\bg}{\Box_{g}}

\makeatletter
\newcommand\incircbin
{%
  \mathpalette\@incircbin
}
\newcommand\@incircbin[2]
{%
  \mathbin%
  {%
    \ooalign{\hidewidth$#1#2$\hidewidth\crcr$#1\bigcirc$}%
  }%
}

\makeatother

\theoremstyle{definition}

\theoremstyle{plain}
\newtheorem{theorem}{Theorem}[section]
\newtheorem{lemma}[theorem]{Lemma}
\newtheorem{proposition}[theorem]{Proposition}

\newtheorem{corollary}[theorem]{Corollary}

\theoremstyle{definition}
\newtheorem{definition}[theorem]{Definition}

\theoremstyle{definition}

\newtheorem{remark}[theorem]{Remark}

\newtheorem{assumption}[theorem]{Assumption}

\newcommand{\R}{\mathbb{R}}

\newcommand{\U}{\uu}

\newcommand{\n}{\mathcal{H}}

\renewcommand{\U}{\uu}

\newcommand{\M}{\mathcal{M}}

\newcommand{\Ht}{\mathcal{H}_{\tau}}
\newcommand{\diam}{\mathsf{D}} 
\newcommand{\dg}{\diam_\gamma}

\newcommand{\uu}{M}

\DeclareMathOperator{\supp}{supp}

\let\div\relax
\DeclareMathOperator{\div}{div}

\def\R{\mathbb R}
\DeclarePairedDelimiter{\norm}{\lVert}{\rVert}

\let\div\relax
\DeclareMathOperator{\div}{div}

\begin{document}

\begin{abstract}
We consider the inverse problem of recovering a potential $q$, depending on space and time, in the wave equation $(\Box_g+q)u=0$ on a Lorentzian manifold. We prove Lipschitz stability for a formally determined version of this problem under a \textit{null-cone foliation} assumption. As a consequence, we obtain uniqueness for the Lorentzian Calder\'on problem in a fixed conformal class. Our approach uses distorted spherical waves.
\end{abstract}

\title[Stability for a Lorentzian inverse problem]{Stability for a formally determined Lorentzian inverse problem}

\author[S. FILIPPAS, L. OKSANEN]{Spyridon Filippas, Lauri Oksanen}

\address{Department of Mathematics and Statistics, University of Helsinki, PO BOX 68, 00014, HELSINKI, Finland}
\email {spyridon.filippas@helsinki.fi}

\address{Department of Mathematics and Statistics, University of Helsinki, PO BOX 68, 00014, HELSINKI, Finland}
\email {lauri.oksanen@helsinki.fi}

\maketitle
\tableofcontents

\section{Introduction}

The Lorentzian Calder\'on problem asks to recover the coefficients of a wave equation on a compact Lorentzian manifold with boundary $(\overline \uu, g)$ from the Cauchy data of its solutions. We consider the operator $\Box_g+q$ where the unknown is a smooth compactly supported potential $q$, and $\Box_{g}u= - \textnormal{div}_{g} \nabla^{g}u$. 

Our main result, Theorem~\ref{thm_main}, is Lipschitz stability for a formally determined inverse problem: the Cauchy data on $\partial\uu$ of one wave per leaf of a null-cone foliation determine $q$, with a Lipschitz estimate. The main novelty of this paper is the geometric generality under which this holds, the null-cone foliation assumption which we now state.

Let $(\M,g)$ be a globally hyperbolic Lorentzian manifold of dimension $1+n$ and let $M \subset \mathcal M$ be an open set with smooth boundary. We refer the reader to Appendix~\ref{ap_lorentz} for facts in Lorentzian geometry used in this article.

\begin{assumption}[Null-cone foliation]
\label{assumption_null_cone_foliation}
We assume that there exists an open causal diamond $\diam = I^+(p_-) \cap I^-(p_+)$ generated by two points $p_-, p_+ \in \M$, such that the following holds. There is a smooth future-directed timelike curve $\gamma \colon I \to \diam$ such that
\[
  \uu \Subset \diam,
  \qquad
  \overline{\uu} \cap \gamma(I) = \emptyset.
\]
For every $\tau \in I$, consider the \emph{regular part} of the future null
cone of $\gamma(\tau)$,
\[
  \mathcal{H}_\tau
  =
  \bigl(\partial J^+(\gamma(\tau)) \setminus \{\gamma(\tau)\}\bigr)
  \cap \diam,
\]
and assume that the family
$\{\mathcal{H}_\tau\}_{\tau \in I}$ forms a smooth foliation of
$\dg=\diam \setminus \gamma(I)$.
More precisely,
\[
  \dg = \bigsqcup_{\tau \in I} \mathcal{H}_\tau,
\]
and there exists a smooth function $f \colon \dg
\to I$ with $df \neq 0$ such that $\mathcal{H}_\tau = f^{-1}(\tau)$
for all $\tau \in I$.
\end{assumption}

\medskip

Before stating the stability result, let us give a uniqueness result it implies as a corollary. For $q\in C_0^\infty(\uu)$, the Cauchy data set of $\Box_g+q$ on $\uu$ is
\begin{equation}\label{eq_cds}
\mathcal C_q(\uu)=\bigl\{(u|_{\partial\uu},\partial_{\nu}u|_{\partial\uu}) : u\in C^\infty(\overline\uu),\ (\Box_g+q)u=0\text{ in }\uu\bigr\},
\end{equation}
where $\partial_\nu$ is the outward normal derivative of $u$ with respect to some fixed reference Riemannian metric on $\overline \uu$.

\begin{theorem}\label{thm_calderon}
Under Assumption \ref{assumption_null_cone_foliation}, let $q_1,q_2\in C_0^\infty(\uu)$. If $\mathcal C_{q_1}(\uu)\subset\mathcal C_{q_2}(\uu)$, then $q_1=q_2$.
\end{theorem}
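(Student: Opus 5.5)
We plan to deduce Theorem~\ref{thm_calderon} from the Lipschitz stability estimate of Theorem~\ref{thm_main}, stated below. The structure of that result is that $q$ is controlled by the Cauchy data on $\partial\uu$ of one distorted spherical wave per leaf $\mathcal H_\tau$. Concretely, for each $\tau\in I$ one takes a solution $u_\tau$ of $(\Box_g+q)u=0$ whose leading singularity is conormal to the null cone $\mathcal H_\tau$ emanating from $\gamma(\tau)\notin\overline\uu$. The stability estimate then bounds $\|q_1-q_2\|$ by a norm of the difference of the corresponding boundary data. The uniqueness statement should therefore follow once we show that the hypothesis $\mathcal C_{q_1}(\uu)\subset\mathcal C_{q_2}(\uu)$ makes the relevant boundary data for $q_1$ and $q_2$ coincide.

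\textbf{Step 1: reduction to an integral identity.} Let $u_1$ solve $(\Box_g+q_1)u_1=0$ in $\uu$. By the inclusion of Cauchy data sets there is $u_2$ with $(\Box_g+q_2)u_2=0$ having the same Cauchy data. Set $w=u_2-u_1$. Then $w$ has vanishing Cauchy data on $\partial\uu$ and satisfies
\[
(\Box_g+q_2)w=(q_1-q_2)u_1.
\]
Let $v$ be any solution of $(\Box_g+q_2)v=0$ in $\uu$. Green's formula gives
\[
\int_\uu (q_1-q_2)u_1 v\,dV_g=0,
\]
since the boundary terms vanish because $w$ and $\partial_\nu w$ vanish. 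Because $q_j$ is supported in $\uu$, this identity holds for all smooth solutions in a neighbourhood of $\overline\uu$.

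\textbf{Step 2: choice of test waves.} Take $u_1$ to be the distorted spherical wave for $q_1$ associated with the leaf $\mathcal H_\tau$. Take $v$ to be a solution for $q_2$ adapted so that the product $u_1v$ concentrates on $\mathcal H_\tau$. One option is for $v$ to be a smooth solution equal to $1$ to leading order (a geometric-optics solution with trivial phase), or a backward-in-time spherical wave, depending on how the paper normalises its waves. With such a choice, the leading term of the identity in Step~1 is a weighted integral of $q_1-q_2$ over $\mathcal H_\tau$, that is, a light-ray (or null-cone) transform of $q_1-q_2$. Equivalently, the hypothesis forces the data functional used in Theorem~\ref{thm_main} to vanish. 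Applying the stability estimate with zero right-hand side then gives $q_1=q_2$.

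\textbf{Main obstacle.} The difficulty is matching precisely what Theorem~\ref{thm_main} measures. If it compares the Cauchy data of $u_\tau^{(1)}$ and $u_\tau^{(2)}$ generated by the same source outside $\uu$, we must show these coincide. Inclusion of data sets only provides \emph{some} $q_2$-solution with matching data, not necessarily the one with the same source. To resolve this we plan to use the fact that $q_j$ vanishes near $\partial\uu$ together with unique continuation for the wave equation across $\partial\uu$ from full Cauchy data; this is valid in a region such as the domain of dependence inside $\diam$, and it also uses global hyperbolicity. The argument glues $u_2$ to $u_\tau^{(1)}$ outside $\uu$, producing a global $q_2$-solution with the same singular source as $u_\tau^{(1)}$, which is therefore $u_\tau^{(2)}$. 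If unique continuation turns out to be insufficient, we fall back on the integral identity of Step~1. In that case we verify that the proof of Theorem~\ref{thm_main} in fact bounds $q_1-q_2$ by exactly the quantity $\int_\uu (q_1-q_2)u_\tau^{(1)}v\,dV_g$, which vanishes under the hypothesis.
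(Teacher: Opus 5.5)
Your main route is the paper's: glue the $q_2$-solution inside $\overline\uu$ to the $q_1$-wave outside, conclude $V_{q_1,\tau}=V_{q_2,\tau}$ in $\diam\setminus\overline\uu$, deduce by continuity from $Q_\tau\setminus\overline\uu$ that $w_\tau$ and $dw_\tau$ vanish on $\Gamma_\tau$ for $\tau\in\mathcal R$, so $\mathcal E=0$ and Theorem~\ref{thm_main} applies. However, the gluing fails as you state it. The Cauchy data set \eqref{eq_cds} contains only traces of solutions in $C^\infty(\overline\uu)$. The distorted spherical wave $V_{q_1,\tau}=\alpha\,\delta(\varphi_\tau)+v\,H(\varphi_\tau)$ is singular on $\mathcal H_\tau$, which crosses $\uu$ with $\alpha\ge\delta_\alpha>0$ on $K$. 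So there is no $u_2$ with the same Cauchy data to glue to it. The paper proves the gluing (Lemma~\ref{lem_gluing}) first for smooth sources $F\in C^\infty_c(\diam\setminus\overline\uu)$, where $U_1$ is smooth on $\overline\uu$. It then reaches $F=\chi\,\delta'(\varphi_\tau)$ by approximating with smooth sources supported away from $\overline\uu$ and using continuity of the forward solution operator.

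You also do not need unique continuation across $\partial\uu$, and it is generally unavailable for time-dependent coefficients. Matching both Dirichlet and Neumann data makes the glued function $\tilde U$ of class $C^1$, so $(\Box_g+q_2)\tilde U=F$ in all of $\diam$ with no layer on $\partial\uu$. Then $\tilde U-U_2$ solves the homogeneous equation and vanishes in the past of $\overline\uu\cup\supp F$. Plain uniqueness for the Cauchy problem in the globally hyperbolic $\diam$ gives $\tilde U=U_2$.

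Two further points need fixing. First, Theorem~\ref{thm_main} is stated for a fixed $\chi$ satisfying Assumption~\ref{assumption_sources}, and you need such a $\chi$ with $\supp\chi\cap\overline\uu=\emptyset$. This is not automatic: the source is a cut-off of $\delta'(\varphi_\tau)$ on the leaf, not a point source at $\gamma(\tau)$. The paper obtains it in Lemma~\ref{lem_exterior_source}, by flowing $K=\supp q_1\cup\supp q_2$ backwards along the null generators into a neighbourhood of $\gamma(I)$ disjoint from $\overline\uu$ and summing bump functions there.

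Second, Steps~1--2 and your fallback should be dropped. The identity $\int_\uu(q_1-q_2)u_1v\,dV_g=0$ does not make $\mathcal E$ vanish, and the proof of Theorem~\ref{thm_main} does not bound $q_1-q_2$ by such an integral. It bounds $\bar q$ on each leaf through the recovery formula of Lemma~\ref{lem_recovery} and the Carleman estimate of Theorem~\ref{thm_carleman}, whose right-hand side consists of the boundary traces of $w_\tau$ on $\Gamma_\tau$. Pursuing the integral identity would instead require inverting a null-cone or light-ray transform, which is exactly what this approach avoids.
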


\begin{figure}[ht]
\centering
\begin{tikzpicture}[scale=0.8]

\fill[gray!10, draw=black, thick]
    (-1.5,0) -- (5,6.2) -- (11.5,0) -- (5,-6.2) -- cycle;

\node[above] at (5,6.2) {\Large $p_+$};
\node[below] at (5,-6.2) {\Large $p_-$};

\draw[blue!70!black, very thick, domain=-0.8:0.8, samples=100]
    plot ({2.25+2.75*\x*\x},{6.2*\x});

\node[blue!70!black] at (2,2.7) {\Large $\gamma$};

\fill[white, opacity=0.45]
    (3.15,-2.05)
    .. controls (3.65,-2.45) and (8.15,-2.30) .. (8.75,-0.95)
    .. controls (9.05,0.0) and (8.55,1.25) .. (7.15,1.65)
    .. controls (5.25,1.95) and (3.55,1.15) .. (3.30,0.05)
    .. controls (3.00,-0.75) and (2.95,-1.45) .. cycle;

\draw[black!50, thick, dashed]
    (3.15,-2.05)
    .. controls (3.65,-2.45) and (8.15,-2.30) .. (8.75,-0.95)
    .. controls (9.05,0.0) and (8.55,1.25) .. (7.15,1.65)
    .. controls (5.25,1.95) and (3.55,1.15) .. (3.30,0.05)
    .. controls (3.00,-0.75) and (2.95,-1.45) .. cycle;

\node at (8.05,-2.15) {\Large $\uu$};

\fill[black!10]
    (3.55,-1.45)
    .. controls (4.00,-1.85) and (7.85,-1.70) .. (8.30,-0.70)
    .. controls (8.55,0.0) and (8.05,0.90) .. (6.95,1.25)
    .. controls (5.35,1.45) and (3.85,0.90) .. (3.65,0.0)
    .. controls (3.35,-0.60) and (3.30,-1.05) .. cycle;

\draw[black!70, thick]
    (3.55,-1.45)
    .. controls (4.00,-1.85) and (7.85,-1.70) .. (8.30,-0.70)
    .. controls (8.55,0.0) and (8.05,0.90) .. (6.95,1.25)
    .. controls (5.35,1.45) and (3.85,0.90) .. (3.65,0.0)
    .. controls (3.35,-0.60) and (3.30,-1.05) .. cycle;

\node at (5.95,-0.65) {\Large $K$};


\coordinate (G1) at ({2.25+2.75*0.25},{6.2*(-0.50)});

\coordinate (G2) at ({2.25+2.75*0.42*0.42},{6.2*(-0.42)});

\coordinate (G3) at ({2.25+2.75*0.34*0.34},{6.2*(-0.34)});

\coordinate (G4) at ({2.25+2.75*0.26*0.26},{6.2*(-0.26)});

\fill[blue!70!black] (G1) circle (2.5pt);
\fill[blue!70!black] (G2) circle (2.5pt);
\fill[blue!70!black] (G3) circle (2.5pt);
\fill[blue!70!black] (G4) circle (2.5pt);


\draw[red, very thick]
    (G1) -- (-0.81545,0.65295);
\draw[red, very thick]
    (G1) -- (8.70423,2.66673);

\draw[red, very thick]
    (G2) -- (-0.66519,0.79629);
\draw[red, very thick]
    (G2) -- (8.34678,3.00768);

\draw[red, very thick]
    (G3) -- (-0.49690,0.95680);
\draw[red, very thick]
    (G3) -- (8.00735,3.33145);

\draw[red, very thick]
    (G4) -- (-0.31060,1.13450);
\draw[red, very thick]
    (G4) -- (7.68593,3.63803);

\node[red] at (7.75,-0.05) {\Large $\mathcal H_\tau$};

\end{tikzpicture}

\caption{The null-cone foliation of the causal diamond
$\diam=I^+(p_-)\cap I^-(p_+)$ containing $\overline\uu$. The timelike curve $\gamma$ joins $p_-$ to
$p_+$ and remains disjoint from $\overline{\uu}$. The family
$\{\mathcal H_\tau\}_{\tau\in I}$ foliates $\dg$. Here $K\subset\uu$ is a compact set containing $\supp q$.}
\label{fig:null_cone_foliation}
\end{figure}
\begin{remark}
The condition $\overline{\U}\cap\gamma(I)=\emptyset$
ensures that $\overline\uu$ stays uniformly away
from the vertices of the null cones. This allows us to disregard the
singular behavior of the cones at their vertices.
For each $\tau\in I$, the set $\n_\tau$ is assumed to be a smooth null hypersurface. This is not automatic on a
general Lorentzian manifold: null cones may develop conjugate points
and caustics, and their causal boundaries need not be smooth globally.
Thus, this condition amounts to assuming that the relevant portions of
the null cones are free of such singularities in $\diam$.
\end{remark}

\subsection{The main stability result}

The quantities $ \uu, \diam, f$ are from now on fixed by Assumption~\ref{assumption_null_cone_foliation}. We fix as well a compact set $K \subset \uu$. For $\tau\in I$ we write
 \begin{equation}
 \label{def_of_quantities}
     \varphi_{\tau} = f-\tau, \qquad \mathcal{H}_{\tau} = \{\varphi_{\tau}=0\}, \qquad Q_{\tau} = \{f>\tau\},\qquad L = -\nabla_g f .
 \end{equation}
Notice that since $f$ is defined on $\dg$, we have $Q_\tau\subset\dg$. The vector field $L$ is null, tangent to every leaf, and future-directed (see Lemma~\ref{lem_monotone}(c)).

We fix the Riemannian metric $h$ having the same density as $g$ given in Lemma~\ref{lem_aux_riem_metric}. Throughout the article, $\nu$ will be the outward unit normal to $\partial\uu$ with respect to the auxiliary Riemannian metric $h$, and $\partial_\nu$ the corresponding normal derivative.

To define a distorted spherical wave adapted to our geometric setting, we solve an inhomogeneous wave equation whose source is concentrated on one leaf $\mathcal H_\tau$ and localized by a cut-off $\chi$. For $x\in\dg$, the null generator of $\mathcal H_{f(x)}$ through $x$ is a null geodesic emanating from the vertex $\gamma(f(x))$ (see Section~\ref{sec_progressive_wave}). We will require that $\chi$ meets every such generator segment ending in $K$.

\begin{assumption}[On the sources]\label{assumption_sources}
We assume that $\chi\in C^\infty_c(\dg; [0,1])$ satisfies the following. For every $x\in K$ the segment of the null generator through $x$ from $\gamma(f(x))$ to $x$ meets $\{\chi>0\}$.
\end{assumption}

\begin{figure}[ht]
\centering
\begin{tikzpicture}[scale=1.05]
\fill[gray!8, draw=black, thick] (0,-5) -- (5,0) -- (0,5) -- (-5,0) -- cycle;
\node[below] at (0,-5) {$p_-$};
\node[above] at (0,5) {$p_+$};
\node at (3.7,-2.6) {$\diam$};

\coordinate (V1) at (-1.753,-2.253);\coordinate (X1) at (1.25,0.75);
\coordinate (R1) at (2.750,2.250);\coordinate (L1) at (-4.503,0.497);
\coordinate (V2) at (-1.440,-2.940);\coordinate (X2) at (2.0,0.5);
\coordinate (R2) at (3.250,1.750);\coordinate (L2) at (-4.690,0.310);
\coordinate (V3) at (-1.094,-3.544);\coordinate (X3) at (2.75,0.3);
\coordinate (R3) at (3.725,1.275);\coordinate (L3) at (-4.819,0.181);

\fill[red!10] (-1.789,-2.162) -- (1.194,0.821) -- (2.806,0.179) -- (-1.026,-3.653)
  -- plot[domain=-3.653:-2.162, samples=20] ({-2.2*(1-(\x/5)*(\x/5))},{\x}) -- cycle;

\foreach \i in {1,2,3}
  \draw[red!35] (L\i) -- (V\i) -- (R\i);

\draw[black!55, thick, dashed] (2.0,0.55) ellipse[x radius=1.45, y radius=1.15];
\node[black!60] at (3.1,-0.75) {$\uu$};
\fill[black!15, draw=black!70, thick] (2.0,0.5) ellipse[x radius=0.95, y radius=0.6];
\node at (2,0.9) {$K$};

\fill[orange!45, draw=orange!80!black, rotate around={-45:(0.15,-1.25)}]
  (0.15,-1.25) ellipse[x radius=1.15, y radius=0.33];
\node[orange!80!black] at (1.75,-2.05) {$\{\chi>0\}$};

\foreach \i in {1,2,3}
  \draw[red!75!black, very thick] (V\i) -- (X\i);
\foreach \i in {1,2,3}
  \fill (X\i) circle (1.7pt);
\node[above left, xshift=1pt] at (X1) {$x$};

\draw[blue!70!black, very thick, domain=-5:5, samples=60]
  plot ({-2.2*(1-(\x/5)*(\x/5))},{\x});
\node[blue!70!black, left] at (-2.2,0.6) {$\gamma$};
\foreach \i in {1,2,3}
  \fill[blue!70!black] (V\i) circle (1.9pt);
\node[blue!70!black, left, xshift=-30pt] at (V1) {$\gamma(f(x))$};
\end{tikzpicture}
\caption{For $x\in K$, the null generator segment from $\gamma(f(x))$ to $x$ (thick) is part of the leaf $\mathcal H_{f(x)}$ (light). These segments sweep out the shaded region and Assumption~\ref{assumption_sources} requires that each of them meets $\{\chi>0\}$.}
\label{fig:sources}
\end{figure}

Since such a  $\chi$ is compactly supported in $\dg$ one has in particular that $\supp(\chi)\cap\gamma(I)=\emptyset$. Any $\chi\in C^\infty_c(\U)$ with $\chi \geq 0$ and $\chi=1$ on $K$ satisfies Assumption~\ref{assumption_sources}, since $x$ itself lies on the segment. Lemma~\ref{lem_exterior_source} gives $\chi$ satisfying Assumption~\ref{assumption_sources} with $\supp\chi\cap\overline\uu=\emptyset$.

\begin{remark}\label{rem_D_globhyp}
The causal diamond $\diam=I^+(p_-)\cap I^-(p_+)$ is globally
hyperbolic. It is causally convex: if $x,y\in\diam$ and
$\lambda$ is a future-directed causal curve from $x$ to $y$, then
for any $z$ on $\lambda$ one has $p_-\ll x\leq z$, hence $p_-\ll z$,
and $z\leq y\ll p_+$, hence\footnote{This uses the chronological transitivity property: $x \ll y$ and $y \leq z$ implies $x \ll z$.}$z\ll p_+$, so $z\in\diam$. For $x,y\in\diam$, the diamond $J^+(x)\cap J^-(y)$ in
$\diam$ equals the one in $\mathcal{M}$
hence is compact. As a consequence $\diam$ is globally hyperbolic.
\end{remark}

Global hyperbolicity and Proposition~\ref{prop_wave_lorentz} allow to define $V_{\tau} \in \mathcal{D}^\prime(\diam)$ the unique solution of
\begin{equation}
 \label{eq_wave_lorentz}
\begin{cases}
    (\bg+q) V_{\tau}=\chi \delta^\prime(\varphi_\tau)  \quad \textnormal{in } \diam,\\
    \supp (V_{\tau}) \subset J^+(\supp (\chi \delta^\prime(\varphi_\tau))).
    \end{cases}
\end{equation}

The source in~\eqref{eq_wave_lorentz} is a distribution supported on the null leaf $\mathcal H_\tau$. We refer to Appendix~\ref{ap_distrib_manifold} where we have collected some useful facts about distributions on manifolds.

A key feature of the null-cone foliation assumption is that it allows to prove that $V=V_\tau$ admits a progressive wave representation. Indeed, we prove in Section~\ref{sec_progressive_wave} that $V=V_\tau$ can be written
\begin{equation}\label{eq_progressive_decomposition}
V=\alpha\delta(\varphi_\tau)+v\,H(\varphi_\tau)\qquad\text{in }\diam,
\end{equation}
where $\alpha$ and $v=v_{q,\tau}$ belong to $C^\infty(\diam)$. The important feature of the coefficients $\alpha,v $  is that $\alpha$ solves a transport equation involving $\theta=\Box f$ and $\chi$ and it is independent of $q$. On the other hand, $v$ solves a transport equation involving $\alpha$ and $q$, see~\eqref{system_a_v}.

Consider the set
\begin{equation}
\Gamma_\tau=\overline{Q_\tau}\cap\partial\uu.
\end{equation}
We define as well $\mathcal R\subset I$ the set of regular values of $f|_{\partial\uu}$. We restrict to $\tau\in\mathcal R$ because, for such $\tau$, $\mathcal H_\tau$ is transversal to $\partial\uu$. In particular, this will allow to integrate by parts in $Q_\tau\cap\uu$ (see Section~\ref{sec_carleman}). The restriction is harmless since by Sard's theorem, $I\setminus\mathcal R$ has measure zero.

Now for $\tau \in \mathcal R$ the traces of $V_\tau$ and $\partial_\nu V_\tau$ on $\partial\uu$ are well defined \cite[Theorem~8.2.4 and Corollary~8.2.7]{hormander1990} and one can define the map
\begin{equation}\label{eq_data_V}
\tilde{\mathcal L}:\ q\longmapsto\bigl(V_{q,\tau}|_{\Gamma_\tau},\ \partial_\nu V_{q,\tau}|_{\Gamma_\tau}\bigr)_{\tau\in\mathcal R}.
\end{equation}

It is however more convenient to work with smooth functions. Since $V=v$ on $Q_\tau$, $V=0$ on $\{\varphi_\tau<0\}$, and $\alpha$ does not depend on $q$, these data are equivalent to
\begin{equation}\label{eq_data_v}
\mathcal L:\ q\longmapsto\bigl(v_{q,\tau}|_{\Gamma_\tau},\ \partial_\nu v_{q,\tau}|_{\Gamma_\tau}\bigr)_{\tau\in\mathcal R},
\end{equation}
that is, $\tilde{\mathcal L}q_1=\tilde{\mathcal L}q_2$ if and only if $(\mathcal Lq_1)_\tau=(\mathcal Lq_2)_\tau$ for all $\tau\in\mathcal R$. Given two potentials $q_1,q_2$ we shall write $v_{j,\tau}=v_{q_j,\tau}$, $j \in \{1,2\}$, for the corresponding data used in $\mathcal{L}q_j$, and $w_\tau=v_{1,\tau}-v_{2,\tau}$.

Fix $A>0$, an integer $k\ge0$, and $[\tau_a,\tau_b]\subset I$ with $f(K)\subset[\tau_a,\tau_b]$. The integer $k$ is fixed large enough depending only on $n$. A potential $q\in C^\infty(\M)$ is \textit{admissible} if
\begin{equation}
\label{def_of_admissible}
\supp q\subset K \textnormal{ and } \|q\|_{C^k}\le A.
\end{equation}
For admissible $q_1,q_2$ and $\tau\in\mathcal R$ we set
\begin{equation}\label{eq_data_E}
 E_\tau=|v_{1,\tau}-v_{2,\tau}|^2_{\Gamma_\tau}+|\partial_\nu v_{1,\tau}-\partial_\nu v_{2,\tau}|^2_{\Gamma_\tau}+|\nabla_h^{\tan} v_{1,\tau}-\nabla_h^{\tan}v_{2,\tau}|^2_{\Gamma_\tau},\qquad \mathcal E= \sqrt{\int_{\tau_a}^{\tau_b} E_\tau\,d\tau },
\end{equation}
where $|\cdot|_{\Gamma_\tau}$ is the $L^2$ norm on $\Gamma_\tau$ for the measure induced by $h$, and $\nabla_h^{\tan}$ is the tangential $h$-gradient on $\partial\uu$. Since $\nabla_h^{\tan}w_\tau$ is determined by $w_\tau|_{\Gamma_\tau}$, $\mathcal E$ is determined by $\mathcal Lq_1$ and $\mathcal Lq_2$.

\begin{theorem}\label{thm_main}
There is $C>0$, depending only on $A$ in~\eqref{def_of_admissible} and on the geometric quantities of Assumption~\ref{assumption_null_cone_foliation} such that for all admissible $q_1,q_2$
\[
\|q_1-q_2\|_{L^2(K)}\le C\mathcal E.
\]
In particular, if $(\mathcal Lq_1)_\tau=(\mathcal Lq_2)_\tau$ for almost every $\tau\in I$, then $q_1=q_2$.
\end{theorem}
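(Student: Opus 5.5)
The argument follows the Bukhgeim--Klibanov / Stefanov--Uhlmann route adapted to progressive waves: derive an equation for $w_\tau$, read off $q_1-q_2$ from the jump of $w_\tau$ across $\mathcal H_\tau$, and control the interior by the boundary data $\mathcal E$ via a Carleman-type energy estimate with weight $f$.

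First, subtract the transport systems \eqref{system_a_v} for $v_{1,\tau},v_{2,\tau}$. Since $\alpha$ is independent of $q$, the wave equation $(\Box_g+q)V=\chi\delta'(\varphi_\tau)$, together with the decomposition \eqref{eq_progressive_decomposition}, gives $(\Box_g+q_1)v_{j,\tau}=0$-type equations in $Q_\tau$ with $q_j$. More precisely, $(\Box_g+q_1)w_\tau=-(q_1-q_2)v_{2,\tau}$ in $Q_\tau$. On the leaf $\mathcal H_\tau$, the coefficient of $\delta(\varphi_\tau)$ yields a transport equation along $L$ of the form
\[
2Lv+\theta v=-q\alpha+(\text{terms in }\alpha),
\]
so that $2Lw_\tau+\theta w_\tau=-(q_1-q_2)\alpha$ on $\mathcal H_\tau$.

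Since $\alpha$ is nonvanishing on $K$ by Assumption~\ref{assumption_sources}, one can integrate this ODE along the null generators from the vertex, where $w_\tau=0$ because $\supp\chi$ lies away from the cones' earlier parts. This gives
\[
\|q_1-q_2\|^2_{L^2(K\cap\mathcal H_\tau)}\lesssim \|Lw_\tau\|^2_{L^2(\mathcal H_\tau\cap \uu)}+\|w_\tau\|^2_{L^2(\mathcal H_\tau\cap \uu)}.
\]
Because $\alpha$ is positive and controlled, combined with a Gr\"onwall argument along generators, this can be reversed: $|q_1-q_2|\le C(|Lw_\tau|+|w_\tau|)$ pointwise on $\mathcal H_\tau\cap K$. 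Integrating over $\tau\in[\tau_a,\tau_b]$ with the coarea formula ($df\neq0$) then gives
\[
\|q_1-q_2\|^2_{L^2(K)}\le C\int_{\tau_a}^{\tau_b}\int_{\mathcal H_\tau\cap\uu}\bigl(|Lw_\tau|^2+|w_\tau|^2\bigr)\,d\tau .
\]

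The second step bounds the leaf energy $\int_{\mathcal H_\tau}(|Lw|^2+|w|^2)$ by boundary terms plus the source. I would multiply $(\Box_g+q_1)w_\tau$ by $e^{\sigma\varphi_\tau}$ times a vector field such as $\partial_t$-like $T$ (timelike, e.g. $\nabla f$ perturbed) and integrate by parts over $Q_\tau\cap\uu$. This is allowed for $\tau\in\mathcal R$ by transversality, and the $h$-density identity of Lemma~\ref{lem_aux_riem_metric} makes the boundary terms comparable to $E_\tau$. The boundary of $Q_\tau\cap\uu$ consists of $\mathcal H_\tau\cap \uu$, where the flux of the energy current through a null surface controls exactly $|Lw|^2$ plus lower order, and $\Gamma_\tau$, which is bounded by $E_\tau$. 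The interior source term is $|q_1-q_2|\,|v_{2,\tau}|$, and $v_{2,\tau}$ is uniformly bounded by admissibility ($C^k$ bounds and smooth dependence of transport solutions). This yields
\[
\int_{\mathcal H_\tau}\bigl(|Lw_\tau|^2+|w_\tau|^2\bigr)\le C E_\tau+C\int_{Q_\tau\cap\uu} e^{2\sigma(f-\tau)}|q_1-q_2|^2 .
\]

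The main obstacle is absorbing that interior term. After integrating in $\tau$ it becomes
\[
\int_K |q_1-q_2|^2(x)\int_{\tau_a}^{f(x)} e^{-2\sigma(f(x)-\tau)}\,d\tau\,dx\le \frac{C}{\sigma}\|q_1-q_2\|^2_{L^2(K)}.
\]
This is the Carleman gain from the weight $e^{\sigma\varphi_\tau}$, used with the sign so that the weight decays away from the leaf. To obtain it, the weight must be arranged so that the energy identity has a positive bulk term (pseudoconvexity of $f$ level sets, or simply a large parameter making the $\sigma|Tw|^2$ bulk term dominate the terms from $\nabla T$ and $q_1$). I would check the sign carefully via the deformation tensor of $T$ relative to $L$, and establish it as a Carleman estimate in Section~\ref{sec_carleman}. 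Choosing $\sigma$ large, depending on $A$ and the geometry, absorbs the interior term into the left side and gives $\|q_1-q_2\|_{L^2(K)}\le C\mathcal E$. Uniqueness for a.e.\ $\tau$ follows immediately, since then $\mathcal E=0$.
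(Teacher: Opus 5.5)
\textbf{There is a genuine gap, in the absorption step.} Your first step is fine and is Lemma~\ref{lem_recovery}: the pointwise bound $|\bar q|\le C(|Lw_\tau|+|w_\tau|)$ on $\mathcal H_\tau\cap K$ follows directly from $2Lw_\tau+\theta w_\tau=-\bar q\,\alpha$ and $\alpha\ge\delta_\alpha$, with no integration along generators.

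The absorption step needs a per-leaf estimate with weight $1$ on $\mathcal H_\tau$, weight $e^{-2\sigma(f-\tau)}$ on the source, and constants independent of $\sigma$. No such estimate holds. $\mathcal H_\tau$ is the \emph{past} boundary of $X_\tau=\overline{Q_\tau}\cap\overline M$, and $w_\tau$ is controlled there by propagating backward from $\Gamma_\tau$. A source at $f$-distance of order one from the leaf therefore affects $w_\tau|_{\mathcal H_\tau}$ by an amount of order one, not $e^{-\sigma}$. Think of a wave packet that crosses $\mathcal H_\tau$ transversally and is cut off only near its exit from $M$.

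In the energy identity this is a sign problem. With $J=\mathbb T(T,\cdot)$ the energy current,
$\operatorname{div}(e^{-2\sigma\varphi_\tau}J)=e^{-2\sigma\varphi_\tau}\operatorname{div}J+2\sigma e^{-2\sigma\varphi_\tau}\,\mathbb T(T,L)$ with $\mathbb T(T,L)\ge0$. After integrating over $Q_\tau\cap M$, the large $\sigma$-term lands on the same side as the source.

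The weight with the good sign is $e^{2\sigma\varphi_\tau}$, as in your first display. But it equals $1$ on every leaf and grows into $Q_\tau$. After integrating in $\tau$, the source then carries a factor of size $e^{2\sigma(f-\tau_a)}$ (up to powers of $\sigma$), while the left side is the unweighted $\|\bar q\|^2_{L^2(K)}$. The sign change between your two displays hides exactly this.

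Moreover, any weight that is a function of $f$ is degenerate, because $df$ is null: $G(\nabla f)=0$ and $D^2f(\nabla f,\nabla f)=\tfrac12\nabla f\bigl(G(\nabla f)\bigr)=0$. So the bulk gives neither a $\sigma^3w^2$ term nor control of the derivative transversal to the leaves. You only get $\mathbb T(T,L)$, i.e.\ $Lw$ and derivatives tangent to the leaves. Also, the null flux through $\mathcal H_\tau$ does not contain the $|w_\tau|^2$ that the $\theta w_\tau$ term of the recovery formula requires.

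\textbf{What is missing} is a weight that does not depend on $\tau$ and has timelike gradient. The paper uses $e^{\sigma\tilde\psi}$, $\tilde\psi=e^{\zeta\psi}$, with $\psi$ increasing toward the future (e.g.\ a temporal function). Theorem~\ref{thm_carleman} then gives
$\sigma^3|e^{\sigma\tilde\psi}w_\tau|^2_{\mathcal H_\tau}+\sigma|e^{\sigma\tilde\psi}Lw_\tau|^2_{\mathcal H_\tau}\le C\|e^{\sigma\tilde\psi}\bar q\|^2_{L^2(K)}+CE_{\tau,\sigma}$.
The $\sigma^3w^2$ term on the leaf comes from $G(\nabla\psi)\,g(\nabla\psi,\nabla f)>0$, and the $\sigma(Lw)^2$ term from Lemma~\ref{lem_algebra} and Corollary~\ref{cor_algebra}. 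Because the weight is the same function on the leaves and in the bulk, the coarea formula turns $\int|e^{\sigma\tilde\psi}\bar q|^2_{\mathcal H_\tau\cap K}\,d\tau$ into $\|e^{\sigma\tilde\psi}\bar q\|^2_{L^2(K)}$ up to constants. The gain $1/\sigma$ comes from the powers of $\sigma$ on the left, not from decay of the weight.

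Your final computation can also be rescued as a Gr\"onwall argument in $\tau$:
\begin{itemize}
\item Prove a $\tau$-uniform \emph{unweighted} estimate $|Lw_\tau|^2_{\mathcal H_\tau}+|w_\tau|^2_{\mathcal H_\tau}\le CE_\tau+C\|\bar q\|^2_{L^2(Q_\tau\cap K)}$. Use energy on spacelike slices of $X_\tau$, with a $w^2$ term in the current to control $|w_\tau|^2$ on the null leaf.
\item Multiply it by $e^{2\sigma\tau}$ and work with the $\tau$-independent weight $e^{2\sigma f}$.
\item Since $e^{2\sigma\tau}=e^{2\sigma f}e^{-2\sigma(f-\tau)}$ on $Q_\tau$, this produces precisely your factor $e^{-2\sigma(f(x)-\tau)}$ and an absorbable $1/(2\sigma)$.
\end{itemize}
This works because the source for the leaf $\mathcal H_\tau$ only involves $\bar q$ on $\{f>\tau\}$.
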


So Theorem~\ref{thm_main}, applied with sources outside $\overline\uu$, is a formally determined Calder\'on-type result: the traces on $\partial\uu$ of one wave per leaf, sent from outside $\overline\uu$, determine $q$, with Lipschitz stability.

\begin{figure}[ht]
\centering
\begin{tikzpicture}[scale=0.85]


\draw[gray!40, dashed, thin]
    (0,-4.5) ++(0:4 and 0.9) arc(0:-180:4 and 0.9);

\fill[gray!7]
    (-4,-4.5) -- (-4,4.5)
    arc(180:0:4 and 0.9)
    -- (4,-4.5)
    arc(0:-180:4 and 0.9)
    -- cycle;

\draw[gray!50, thin] (-4,-4.5) -- (-4, 4.5);
\draw[gray!50, thin] ( 4,-4.5) -- ( 4, 4.5);

\draw[gray!50, thin] (0,4.5) ellipse (4 and 0.9);

\draw[gray!50, thin]
    (0,-4.5) ++(0:4 and 0.9) arc(0:180:4 and 0.9);

\node[gray!70] at (3.0, -4.5) {$N$};

%
%

\fill[blue!8, opacity=0.8]
    (-0.5,-3.2) -- (2.7,0) -- (-0.5,3.2) -- (-3.7,0) -- cycle;
\draw[blue!50!black, thick]
    (-0.5,-3.2) -- (2.7,0) -- (-0.5,3.2) -- (-3.7,0) -- cycle;

\node[blue!50!black, right=2pt] at (2.7,0) {$\diam$};
\node[above, blue!50!black] at (-0.3,3.1) {$p_+$};
\node[below, blue!50!black] at (-0.5,-3.2) {$p_-$};

\draw[blue!80!black, very thick] (-0.5,-3.1) -- (-0.5,3.1);
\node[blue!80!black, left=3pt] at (-0.7, 2.2) {$\gamma$};

\foreach \t in {-2.5,-1.0,0.5,2.0} {
    \fill[blue!80!black] (-0.5,\t) circle (2pt);
}

\draw[red!70, thin] (-0.5,-2.5) -- (2.35, 0.35);
\draw[red!70, thin] (-0.5,-2.5) -- (-3.35, 0.35);
\draw[red!70, thin] (-0.5,-1.0) -- (1.6, 1.1);
\draw[red!70, thin] (-0.5,-1.0) -- (-2.6, 1.1);
\draw[red!70, thin] (-0.5, 0.5) -- (0.85, 1.85);
\draw[red!70, thin] (-0.5, 0.5) -- (-1.85, 1.85);
\draw[red!70, thin] (-0.5, 2.0) -- (0.1,  2.6);
\draw[red!70, thin] (-0.5, 2.0) -- (-1.1, 2.6);

\node[red!80, right=1pt] at (2.35,0.55) {\small$\mathcal{H}_\tau$};

\fill[black!15] (1.2,0) ellipse (1.1 and 0.7);
\draw[black!60, dashed, thick]
    (1.2,0) ellipse (1.1 and 0.7);
\node at (1.2,0) {\small$\uu$};

\draw[black!35, dashed] (0.1, 0) -- (0.1, -4.5);
\draw[black!35, dashed] (2.3, 0) -- (2.3, -4.5);

\fill[black!20, draw=black!60]
    (1.2,-4.5) ellipse (1.1 and 0.34);
\node[below=1pt] at (1.2,-4.6) {\small$\pi_N(\overline\uu)$};

\fill[blue!80!black] (-0.5,-4.5) circle (2.5pt);
\node[below=2pt, blue!80!black] at (-0.5,-4.5) {$x_0$};
\draw[blue!40, dashed] (-0.5,-3.2) -- (-0.5,-4.5);

\draw[->, gray!60, thin] (-4.5,-4.5) -- (-4.5,4.5)
    node[left, gray!70] {\small$t$};

\end{tikzpicture}
\caption{The spacetime $\mathbb{R}\times N$ represented as a cylinder. The null-cone foliation assumption~\ref{assumption_null_cone_foliation} is satisfied as soon as there is a point $x_0$ outside of the spatial projection of $\overline\uu$, such that $\exp_{x_0}$ is a global diffeomorphism.}
\label{fig:ultrastatic-3d}
\end{figure}

\medskip

We conclude this subsection with two remarks concerning the Lorentzian Calder\'on problem.

\begin{remark}
In the Lorentzian Calder\'on problem, only a compact Lorentzian manifold with boundary is given, and the data are the Cauchy data of the solutions on it. One can obtain an \textit{intrinsic} version of Theorem~\ref{thm_calderon} by stating it in terms of a \textit{null-cone extension} assumption:  a globally hyperbolic Lorentzian manifold $(\M,\tilde g)$ of dimension $1+n$, together with an isometric, time-orientation preserving embedding $\overline\uu\subset\M$, such that Assumption~\ref{assumption_null_cone_foliation} holds for $(\M,\tilde g)$ and $\uu$. Since $\mathcal C_q(\uu)$ depends only on $(\overline\uu,g)$ and $q$, Theorem~\ref{thm_calderon} then holds for every $(\overline\uu,g)$ admitting a null-cone extension.
\end{remark}

\begin{remark}\label{rem_conformal}
For $n\ge2$, Theorem~\ref{thm_calderon} also recovers positive conformal factors $c$ inside $\uu$ by the same gauge transformation as in \cite[Section~1]{alexakis2022}. Let $c_1,c_2$ be positive conformal factors with $c_j=1$ near $\partial\uu$, and assume that the Cauchy data set of $\Box_{c_1g}$ on $\uu$ is contained in that of $\Box_{c_2g}$. The map $u\mapsto c_j^{(n-1)/4}u$ sends the solutions of $\Box_{c_jg}u=0$ to those of $(\Box_g+q_j)w=0$, where
\[
q_j=-c_j^{-(n-1)/4}\,\Box_g\,c_j^{(n-1)/4}\in C_0^\infty(\uu),
\]
and it does not change the Cauchy data on $\partial\uu$. Hence $\mathcal C_{q_1}(\uu)\subset\mathcal C_{q_2}(\uu)$, and Theorem~\ref{thm_calderon} gives $q_1=q_2$. Then $v=c_1^{(n-1)/4}-c_2^{(n-1)/4}\in C_0^\infty(\uu)$ solves $(\Box_g+q_1)v=0$, so $v=0$ by uniqueness for the Cauchy problem, and we obtain $c_1=c_2$.
\end{remark}

\subsection{Examples}
\label{sec_examples}

\subsubsection{Ultrastatic spaces}  

Let $(N,h_N)$ be a complete Riemannian manifold, and let $\overline\uu$ be a compact domain with smooth boundary in the ultrastatic spacetime $(\R\times N,-dt^2+h_N)$. Assume that there is a point $x_0\in N$ outside the compact set $\pi_N(\overline\uu)$ such that $\exp_{x_0}\colon T_{x_0}N\to N$ is a diffeomorphism, where $\pi_N\colon\R\times N\to N$ is the projection. Then Assumption~\ref{assumption_null_cone_foliation} holds, by Lemma~\ref{lem:ultrastatic-foliation} (see Figure~\ref{fig:ultrastatic-3d}). In particular, Assumption~\ref{assumption_null_cone_foliation} holds for every compact domain with smooth boundary in Minkowski space. Since Assumption~\ref{assumption_null_cone_foliation} only involves the causal structure, it depends only on the conformal class of the metric. Hence the same holds for $c\,(-dt^2+h_N)$, with $c>0$ smooth on $\R\times N$ and possibly depending on time.

\begin{figure}[ht]
\centering
\begin{tikzpicture}[scale=1.5, baseline={(0,1)}]
\fill[black!4, even odd rule] (-0.931,-0.372) -- (-0.980,-0.215) -- (-0.997,-0.103) -- (-1.003,0.000) -- (-0.997,0.103) -- (-0.980,0.215) -- (-0.953,0.314) -- (-0.911,0.419) -- (-0.864,0.509) -- (-0.802,0.603) -- (-0.694,0.724) -- (-0.610,0.796) -- (-0.520,0.858) -- (-0.363,0.935) -- (-0.251,0.971) -- (-0.163,0.990) -- (-0.064,1.001) -- (0.022,1.003) -- (0.141,0.993) -- (0.230,0.976) -- (0.302,0.957) -- (0.406,0.917) -- (0.468,0.887) -- (0.569,0.825) -- (0.648,0.765) -- (0.729,0.689) -- (0.831,0.562) -- (0.889,0.465) -- (0.931,0.372) -- (0.980,0.215) -- (0.997,0.103) -- (1.003,-0.000) -- (0.997,-0.103) -- (0.980,-0.215) -- (0.953,-0.314) -- (0.911,-0.419) -- (0.864,-0.509) -- (0.802,-0.603) -- (0.694,-0.724) -- (0.610,-0.796) -- (0.520,-0.858) -- (0.363,-0.935) -- (0.251,-0.971) -- (0.163,-0.990) -- (0.064,-1.001) -- (-0.022,-1.003) -- (-0.141,-0.993) -- (-0.230,-0.976) -- (-0.302,-0.957) -- (-0.406,-0.917) -- (-0.468,-0.887) -- (-0.569,-0.825) -- (-0.648,-0.765) -- (-0.729,-0.689) -- (-0.831,-0.562) -- (-0.889,-0.465) -- cycle;
\fill[black!15, even odd rule] (-0.976,-0.233) -- (-0.992,-0.148) -- (-1.001,-0.054) -- (-0.996,0.116) -- (-0.983,0.201) -- (-0.959,0.292) -- (-0.899,0.441) -- (-0.896,0.438) -- (-0.902,0.416) -- (-0.896,0.396) -- (-0.859,0.361) -- (-0.771,0.321) -- (-0.664,0.292) -- (-0.502,0.263) -- (-0.346,0.246) -- (-0.000,0.231) -- (0.346,0.246) -- (0.502,0.263) -- (0.664,0.292) -- (0.771,0.321) -- (0.859,0.361) -- (0.896,0.396) -- (0.902,0.416) -- (0.899,0.442) -- (0.942,0.343) -- (0.976,0.232) -- (0.999,0.086) -- (1.001,-0.065) -- (0.980,-0.211) -- (0.939,-0.353) -- (0.901,-0.440) -- (0.873,-0.470) -- (0.807,-0.505) -- (0.689,-0.542) -- (0.561,-0.567) -- (0.411,-0.587) -- (0.071,-0.607) -- (-0.142,-0.605) -- (-0.313,-0.596) -- (-0.473,-0.580) -- (-0.615,-0.558) -- (-0.790,-0.511) -- (-0.850,-0.484) -- (-0.882,-0.463) -- (-0.901,-0.440) -- (-0.931,-0.374) -- cycle;
\draw[black!60, thin] plot[smooth, tension=0.5] coordinates {(0.000,0.228) (0.063,0.229) (0.126,0.230) (0.188,0.232) (0.249,0.236) (0.309,0.240) (0.372,0.245) (0.432,0.251) (0.493,0.259) (0.554,0.268) (0.614,0.278) (0.675,0.291) (0.734,0.307) (0.791,0.325) (0.846,0.350) (0.894,0.387) (0.900,0.435)};
\draw[black!60, thin] plot[smooth, tension=0.5] coordinates {(-0.901,0.435) (-0.884,0.377) (-0.833,0.343) (-0.776,0.320) (-0.718,0.302) (-0.659,0.288) (-0.600,0.276) (-0.538,0.265) (-0.476,0.257) (-0.414,0.249) (-0.354,0.243) (-0.291,0.238) (-0.230,0.235) (-0.169,0.232) (-0.106,0.230) (-0.043,0.229) (-0.000,0.228)};
\draw[black!60, thin] plot[smooth, tension=0.5] coordinates {(0.000,-0.605) (0.063,-0.604) (0.126,-0.603) (0.188,-0.600) (0.249,-0.597) (0.309,-0.593) (0.372,-0.588) (0.432,-0.582) (0.493,-0.574) (0.554,-0.565) (0.614,-0.555) (0.675,-0.542) (0.734,-0.526) (0.791,-0.508) (0.846,-0.483) (0.894,-0.446) (0.901,-0.435)};
\draw[black!60, thin] plot[smooth, tension=0.5] coordinates {(-0.900,-0.435) (-0.857,-0.477) (-0.803,-0.503) (-0.746,-0.523) (-0.685,-0.539) (-0.626,-0.552) (-0.566,-0.563) (-0.506,-0.572) (-0.446,-0.580) (-0.382,-0.587) (-0.321,-0.592) (-0.257,-0.597) (-0.196,-0.600) (-0.134,-0.603) (-0.071,-0.604) (-0.008,-0.605) (-0.000,-0.605)};
\draw[red!50, thin] plot[smooth, tension=0.5] coordinates {(0.000,0.978) (0.000,0.928) (0.000,0.877) (0.000,0.826) (0.000,0.774) (0.000,0.722) (0.000,0.671) (0.000,0.620) (0.000,0.570) (0.000,0.517) (0.000,0.466) (0.000,0.413) (0.000,0.362) (0.000,0.310) (0.000,0.257) (0.000,0.204) (0.000,0.154) (0.000,0.103) (0.000,0.052) (0.000,0.001) (0.000,-0.050) (0.000,-0.101) (0.000,-0.152) (0.000,-0.202) (0.000,-0.256) (0.000,-0.309) (0.000,-0.360) (0.000,-0.411) (0.000,-0.464) (0.000,-0.516) (0.000,-0.568) (0.000,-0.619) (0.000,-0.670) (0.000,-0.721) (0.000,-0.773) (0.000,-0.825) (0.000,-0.877) (0.000,-0.927) (0.000,-0.978) (0.000,-1.000)};
\draw[red!40, thin, densely dashed, opacity=0.6] plot[smooth, tension=0.5] coordinates {(0.000,-1.000) (0.000,-0.978)};
\draw[red!50, thin] plot[smooth, tension=0.5] coordinates {(0.000,0.978) (0.048,0.961) (0.094,0.938) (0.137,0.912) (0.177,0.882) (0.217,0.847) (0.253,0.811) (0.288,0.771) (0.319,0.730) (0.350,0.686) (0.377,0.643) (0.403,0.597) (0.426,0.553) (0.447,0.506) (0.468,0.459) (0.487,0.410) (0.504,0.359) (0.520,0.308) (0.533,0.259) (0.545,0.210) (0.556,0.160) (0.565,0.110) (0.572,0.059) (0.578,0.009) (0.583,-0.042) (0.586,-0.093) (0.588,-0.143) (0.588,-0.197) (0.586,-0.250) (0.582,-0.303) (0.577,-0.354) (0.570,-0.405) (0.561,-0.454) (0.549,-0.505) (0.536,-0.555) (0.520,-0.605) (0.502,-0.654) (0.480,-0.702) (0.456,-0.747) (0.429,-0.792) (0.398,-0.834) (0.364,-0.872) (0.327,-0.906) (0.286,-0.937) (0.240,-0.962) (0.193,-0.979) (0.149,-0.989)};
\draw[red!40, thin, densely dashed, opacity=0.6] plot[smooth, tension=0.5] coordinates {(0.149,-0.989) (0.097,-0.992) (0.046,-0.988) (0.000,-0.978)};
\draw[red!50, thin] plot[smooth, tension=0.5] coordinates {(0.000,0.978) (0.052,0.973) (0.104,0.965) (0.156,0.954) (0.208,0.940) (0.258,0.924) (0.308,0.904) (0.357,0.882) (0.402,0.859) (0.446,0.834) (0.488,0.807) (0.529,0.777) (0.569,0.746) (0.607,0.712) (0.643,0.677) (0.678,0.640) (0.712,0.601) (0.743,0.561) (0.772,0.519) (0.800,0.476) (0.825,0.431) (0.848,0.386) (0.869,0.339) (0.888,0.292) (0.904,0.243) (0.918,0.195) (0.929,0.145) (0.938,0.096) (0.945,0.046) (0.949,-0.005) (0.951,-0.055) (0.950,-0.108) (0.946,-0.162) (0.940,-0.215) (0.930,-0.267) (0.918,-0.319) (0.903,-0.369) (0.885,-0.418) (0.864,-0.467) (0.841,-0.513) (0.815,-0.559) (0.787,-0.602) (0.757,-0.644) (0.724,-0.683) (0.689,-0.721) (0.652,-0.756) (0.613,-0.790) (0.572,-0.820) (0.538,-0.843)};
\draw[red!40, thin, densely dashed, opacity=0.6] plot[smooth, tension=0.5] coordinates {(0.538,-0.843) (0.494,-0.869) (0.449,-0.893) (0.402,-0.914) (0.354,-0.932) (0.305,-0.947) (0.255,-0.960) (0.204,-0.969) (0.153,-0.976) (0.101,-0.979) (0.049,-0.980) (0.000,-0.978)};
\draw[red!50, thin] plot[smooth, tension=0.5] coordinates {(0.000,0.978) (0.052,0.980) (0.104,0.979) (0.156,0.975) (0.208,0.969) (0.258,0.959) (0.308,0.946) (0.357,0.931) (0.405,0.912) (0.452,0.891) (0.497,0.867) (0.541,0.841)};
\draw[red!40, thin, densely dashed, opacity=0.6] plot[smooth, tension=0.5] coordinates {(0.541,0.841) (0.584,0.812) (0.624,0.780) (0.663,0.747) (0.699,0.710) (0.733,0.672) (0.766,0.632) (0.796,0.590) (0.823,0.546) (0.848,0.500) (0.870,0.453) (0.890,0.405) (0.907,0.355) (0.922,0.304) (0.933,0.252) (0.942,0.200) (0.948,0.147) (0.951,0.093) (0.951,0.039) (0.948,-0.011) (0.943,-0.061) (0.936,-0.111) (0.926,-0.160) (0.914,-0.210) (0.899,-0.258) (0.882,-0.306) (0.863,-0.353) (0.841,-0.400) (0.817,-0.445) (0.791,-0.489) (0.763,-0.532) (0.733,-0.573) (0.702,-0.613) (0.668,-0.651) (0.632,-0.688) (0.595,-0.723) (0.557,-0.756) (0.516,-0.786) (0.475,-0.815) (0.432,-0.842) (0.388,-0.867) (0.344,-0.889) (0.294,-0.910) (0.244,-0.929) (0.193,-0.945) (0.142,-0.958) (0.090,-0.968) (0.037,-0.975) (0.000,-0.978)};
\draw[red!50, thin] plot[smooth, tension=0.5] coordinates {(0.000,0.978) (0.051,0.989) (0.101,0.993) (0.151,0.989)};
\draw[red!40, thin, densely dashed, opacity=0.6] plot[smooth, tension=0.5] coordinates {(0.151,0.989) (0.202,0.977) (0.248,0.958) (0.294,0.931) (0.335,0.900) (0.371,0.864) (0.404,0.826) (0.434,0.784) (0.461,0.740) (0.484,0.694) (0.504,0.648) (0.522,0.599) (0.538,0.548) (0.551,0.498) (0.562,0.447) (0.571,0.398) (0.578,0.347) (0.583,0.295) (0.586,0.243) (0.588,0.189) (0.588,0.139) (0.586,0.089) (0.583,0.038) (0.578,-0.012) (0.572,-0.063) (0.564,-0.114) (0.555,-0.164) (0.544,-0.214) (0.532,-0.263) (0.519,-0.311) (0.503,-0.363) (0.485,-0.413) (0.466,-0.462) (0.446,-0.510) (0.424,-0.556) (0.401,-0.600) (0.375,-0.646) (0.348,-0.689) (0.317,-0.733) (0.286,-0.773) (0.251,-0.813) (0.214,-0.849) (0.175,-0.884) (0.133,-0.915) (0.090,-0.941) (0.044,-0.963) (0.000,-0.978)};
\draw[red!50, thin] plot[smooth, tension=0.5] coordinates {(0.000,0.978) (0.000,1.000)};
\draw[red!40, thin, densely dashed, opacity=0.6] plot[smooth, tension=0.5] coordinates {(0.000,1.000) (0.000,0.949) (0.000,0.898) (0.000,0.847) (0.000,0.795) (0.000,0.745) (0.000,0.693) (0.000,0.640) (0.000,0.588) (0.000,0.536) (0.000,0.485) (0.000,0.433) (0.000,0.382) (0.000,0.331) (0.000,0.278) (0.000,0.225) (0.000,0.175) (0.000,0.125) (0.000,0.074) (0.000,0.023) (0.000,-0.028) (0.000,-0.079) (0.000,-0.130) (0.000,-0.181) (0.000,-0.231) (0.000,-0.284) (0.000,-0.336) (0.000,-0.388) (0.000,-0.438) (0.000,-0.490) (0.000,-0.541) (0.000,-0.592) (0.000,-0.645) (0.000,-0.697) (0.000,-0.749) (0.000,-0.801) (0.000,-0.852) (0.000,-0.902) (0.000,-0.953) (0.000,-0.978)};
\draw[red!50, thin] plot[smooth, tension=0.5] coordinates {(-0.000,0.978) (-0.051,0.989) (-0.101,0.993) (-0.151,0.989)};
\draw[red!40, thin, densely dashed, opacity=0.6] plot[smooth, tension=0.5] coordinates {(-0.151,0.989) (-0.202,0.977) (-0.248,0.958) (-0.294,0.931) (-0.335,0.900) (-0.371,0.864) (-0.404,0.826) (-0.434,0.784) (-0.461,0.740) (-0.484,0.694) (-0.504,0.648) (-0.522,0.599) (-0.538,0.548) (-0.551,0.498) (-0.562,0.447) (-0.571,0.398) (-0.578,0.347) (-0.583,0.295) (-0.586,0.243) (-0.588,0.189) (-0.588,0.139) (-0.586,0.089) (-0.583,0.038) (-0.578,-0.012) (-0.572,-0.063) (-0.564,-0.114) (-0.555,-0.164) (-0.544,-0.214) (-0.532,-0.263) (-0.519,-0.311) (-0.503,-0.363) (-0.485,-0.413) (-0.466,-0.462) (-0.446,-0.510) (-0.424,-0.556) (-0.401,-0.600) (-0.375,-0.646) (-0.348,-0.689) (-0.317,-0.733) (-0.286,-0.773) (-0.251,-0.813) (-0.214,-0.849) (-0.175,-0.884) (-0.133,-0.915) (-0.090,-0.941) (-0.044,-0.963) (-0.000,-0.978)};
\draw[red!50, thin] plot[smooth, tension=0.5] coordinates {(-0.000,0.978) (-0.052,0.980) (-0.104,0.979) (-0.156,0.975) (-0.208,0.969) (-0.258,0.959) (-0.308,0.946) (-0.357,0.931) (-0.405,0.912) (-0.452,0.891) (-0.497,0.867) (-0.541,0.841)};
\draw[red!40, thin, densely dashed, opacity=0.6] plot[smooth, tension=0.5] coordinates {(-0.541,0.841) (-0.584,0.812) (-0.624,0.780) (-0.663,0.747) (-0.699,0.710) (-0.733,0.672) (-0.766,0.632) (-0.796,0.590) (-0.823,0.546) (-0.848,0.500) (-0.870,0.453) (-0.890,0.405) (-0.907,0.355) (-0.922,0.304) (-0.933,0.252) (-0.942,0.200) (-0.948,0.147) (-0.951,0.093) (-0.951,0.039) (-0.948,-0.011) (-0.943,-0.061) (-0.936,-0.111) (-0.926,-0.160) (-0.914,-0.210) (-0.899,-0.258) (-0.882,-0.306) (-0.863,-0.353) (-0.841,-0.400) (-0.817,-0.445) (-0.791,-0.489) (-0.763,-0.532) (-0.733,-0.573) (-0.702,-0.613) (-0.668,-0.651) (-0.632,-0.688) (-0.595,-0.723) (-0.557,-0.756) (-0.516,-0.786) (-0.475,-0.815) (-0.432,-0.842) (-0.388,-0.867) (-0.344,-0.889) (-0.294,-0.910) (-0.244,-0.929) (-0.193,-0.945) (-0.142,-0.958) (-0.090,-0.968) (-0.037,-0.975) (-0.000,-0.978)};
\draw[red!50, thin] plot[smooth, tension=0.5] coordinates {(-0.000,0.978) (-0.052,0.973) (-0.104,0.965) (-0.156,0.954) (-0.208,0.940) (-0.258,0.924) (-0.308,0.904) (-0.357,0.882) (-0.402,0.859) (-0.446,0.834) (-0.488,0.807) (-0.529,0.777) (-0.569,0.746) (-0.607,0.712) (-0.643,0.677) (-0.678,0.640) (-0.712,0.601) (-0.743,0.561) (-0.772,0.519) (-0.800,0.476) (-0.825,0.431) (-0.848,0.386) (-0.869,0.339) (-0.888,0.292) (-0.904,0.243) (-0.918,0.195) (-0.929,0.145) (-0.938,0.096) (-0.945,0.046) (-0.949,-0.005) (-0.951,-0.055) (-0.950,-0.108) (-0.946,-0.162) (-0.940,-0.215) (-0.930,-0.267) (-0.918,-0.319) (-0.903,-0.369) (-0.885,-0.418) (-0.864,-0.467) (-0.841,-0.513) (-0.815,-0.559) (-0.787,-0.602) (-0.757,-0.644) (-0.724,-0.683) (-0.689,-0.721) (-0.652,-0.756) (-0.613,-0.790) (-0.572,-0.820) (-0.538,-0.843)};
\draw[red!40, thin, densely dashed, opacity=0.6] plot[smooth, tension=0.5] coordinates {(-0.538,-0.843) (-0.494,-0.869) (-0.449,-0.893) (-0.402,-0.914) (-0.354,-0.932) (-0.305,-0.947) (-0.255,-0.960) (-0.204,-0.969) (-0.153,-0.976) (-0.101,-0.979) (-0.049,-0.980) (-0.000,-0.978)};
\draw[red!50, thin] plot[smooth, tension=0.5] coordinates {(-0.000,0.978) (-0.048,0.961) (-0.094,0.938) (-0.137,0.912) (-0.177,0.882) (-0.217,0.847) (-0.253,0.811) (-0.288,0.771) (-0.319,0.730) (-0.350,0.686) (-0.377,0.643) (-0.403,0.597) (-0.426,0.553) (-0.447,0.506) (-0.468,0.459) (-0.487,0.410) (-0.504,0.359) (-0.520,0.308) (-0.533,0.259) (-0.545,0.210) (-0.556,0.160) (-0.565,0.110) (-0.572,0.059) (-0.578,0.009) (-0.583,-0.042) (-0.586,-0.093) (-0.588,-0.143) (-0.588,-0.197) (-0.586,-0.250) (-0.582,-0.303) (-0.577,-0.354) (-0.570,-0.405) (-0.561,-0.454) (-0.549,-0.505) (-0.536,-0.555) (-0.520,-0.605) (-0.502,-0.654) (-0.480,-0.702) (-0.456,-0.747) (-0.429,-0.792) (-0.398,-0.834) (-0.364,-0.872) (-0.327,-0.906) (-0.286,-0.937) (-0.240,-0.962) (-0.193,-0.979) (-0.149,-0.989)};
\draw[red!40, thin, densely dashed, opacity=0.6] plot[smooth, tension=0.5] coordinates {(-0.149,-0.989) (-0.097,-0.992) (-0.046,-0.988) (-0.000,-0.978)};
\draw[black!75, thick] plot[smooth, tension=0.5] coordinates {(-0.027,-1.000) (-0.071,-0.997) (-0.111,-0.994) (-0.152,-0.988) (-0.192,-0.981) (-0.232,-0.973) (-0.272,-0.962) (-0.312,-0.950) (-0.351,-0.936) (-0.389,-0.921) (-0.428,-0.904) (-0.465,-0.886) (-0.500,-0.866) (-0.535,-0.845) (-0.570,-0.822) (-0.604,-0.797) (-0.636,-0.771) (-0.668,-0.745) (-0.698,-0.716) (-0.726,-0.687) (-0.754,-0.657) (-0.780,-0.626) (-0.805,-0.594) (-0.828,-0.560) (-0.850,-0.526) (-0.871,-0.492) (-0.890,-0.456) (-0.908,-0.420) (-0.924,-0.383) (-0.938,-0.345) (-0.952,-0.307) (-0.963,-0.269) (-0.973,-0.230) (-0.982,-0.191) (-0.989,-0.151) (-0.994,-0.111) (-0.997,-0.071) (-1.000,-0.031) (-1.000,0.009) (-0.999,0.049) (-0.996,0.089) (-0.992,0.129) (-0.986,0.169) (-0.978,0.208) (-0.969,0.247) (-0.958,0.286) (-0.946,0.324) (-0.932,0.362) (-0.917,0.400) (-0.900,0.436) (-0.882,0.472) (-0.862,0.507) (-0.840,0.542) (-0.818,0.575) (-0.794,0.608) (-0.768,0.640) (-0.742,0.671) (-0.714,0.701) (-0.684,0.729) (-0.654,0.757) (-0.622,0.783) (-0.589,0.808) (-0.554,0.832) (-0.520,0.854) (-0.485,0.874) (-0.449,0.893) (-0.412,0.911) (-0.375,0.927) (-0.336,0.942) (-0.298,0.955) (-0.258,0.966) (-0.217,0.976) (-0.177,0.984) (-0.134,0.991) (-0.093,0.996) (-0.049,0.999) (0.027,1.000) (0.071,0.997) (0.111,0.994) (0.152,0.988) (0.192,0.981) (0.232,0.973) (0.272,0.962) (0.312,0.950) (0.351,0.936) (0.389,0.921) (0.428,0.904) (0.465,0.886) (0.500,0.866) (0.535,0.845) (0.570,0.822) (0.604,0.797) (0.636,0.771) (0.668,0.745) (0.698,0.716) (0.726,0.687) (0.754,0.657) (0.780,0.626) (0.805,0.594) (0.828,0.560) (0.850,0.526) (0.871,0.492) (0.890,0.456) (0.908,0.420) (0.924,0.383) (0.938,0.345) (0.952,0.307) (0.963,0.269) (0.973,0.230) (0.982,0.191) (0.989,0.151) (0.994,0.111) (0.997,0.071) (1.000,0.031) (1.000,-0.009) (0.999,-0.049) (0.996,-0.089) (0.992,-0.129) (0.986,-0.169) (0.978,-0.208) (0.969,-0.247) (0.958,-0.286) (0.946,-0.324) (0.932,-0.362) (0.917,-0.400) (0.900,-0.436) (0.882,-0.472) (0.862,-0.507) (0.840,-0.542) (0.818,-0.575) (0.794,-0.608) (0.768,-0.640) (0.742,-0.671) (0.714,-0.701) (0.684,-0.729) (0.654,-0.757) (0.622,-0.783) (0.589,-0.808) (0.554,-0.832) (0.520,-0.854) (0.485,-0.874) (0.449,-0.893) (0.412,-0.911) (0.375,-0.927) (0.336,-0.942) (0.298,-0.955) (0.258,-0.966) (0.217,-0.976) (0.177,-0.984) (0.134,-0.991) (0.093,-0.996) (0.049,-0.999) (0.027,-1.000)};
\fill[blue!80!black] (0,0.978) circle (1.3pt);
\node[blue!80!black, above, font=\small] at (0,1.0) {$x_0$};
\draw[black!80, fill=white] (0,-0.978) circle (1.3pt);
\node[below, font=\small] at (0,-1.0) {$x_S$};
\draw[black!70, thin] (-0.98,0.02) -- (-1.3,0.2) node[left, font=\small] {$\pi_N(\overline\uu)$};
\node[font=\small] at (-1.0,0.8) {$S^2$};
\end{tikzpicture}
\hspace{1.3cm}
\begin{tikzpicture}[scale=1.5, baseline={(0,0.7)}]
\fill[black!4, even odd rule] (-1.688,-2.174) -- (-1.807,-2.136) -- (-1.906,-2.095) -- (-1.985,-2.053) -- (-2.041,-2.009) -- (-2.076,-1.963) -- (-2.085,-1.939) -- (-2.085,-1.893) -- (-2.062,-1.847) -- (-2.016,-1.802) -- (-0.896,-1.010) -- (-0.822,-0.931) -- (-0.794,-0.881) -- (-0.775,-0.825) -- (-0.768,-0.765) -- (-0.773,-0.705) -- (-0.790,-0.646) -- (-0.818,-0.592) -- (-0.870,-0.526) -- (-1.019,-0.389) -- (-1.076,-0.327) -- (-1.142,-0.230) -- (-1.176,-0.154) -- (-1.199,-0.053) -- (-1.202,0.030) -- (-1.189,0.112) -- (-1.146,0.222) -- (-1.082,0.320) -- (-1.001,0.406) -- (-0.895,0.489) -- (-0.750,0.572) -- (-0.600,0.634) -- (-0.461,0.676) -- (-0.289,0.710) -- (-0.131,0.727) -- (0.044,0.731) -- (0.229,0.718) -- (0.416,0.687) -- (0.581,0.641) -- (0.722,0.585) -- (0.868,0.507) -- (1.001,0.406) -- (1.082,0.320) -- (1.146,0.222) -- (1.189,0.112) -- (1.203,-0.005) -- (1.187,-0.121) -- (1.158,-0.199) -- (1.097,-0.300) -- (1.019,-0.389) -- (0.870,-0.526) -- (0.804,-0.617) -- (0.781,-0.672) -- (0.769,-0.732) -- (0.770,-0.796) -- (0.794,-0.881) -- (0.842,-0.958) -- (0.922,-1.030) -- (2.016,-1.802) -- (2.062,-1.846) -- (2.076,-1.869) -- (2.088,-1.916) -- (2.085,-1.940) -- (2.062,-1.986) -- (1.985,-2.053) -- (1.859,-2.116) -- (1.688,-2.174) -- (1.475,-2.226) -- (1.226,-2.270) -- (0.947,-2.305) -- (0.645,-2.331) -- (0.326,-2.347) -- (0.000,-2.353) -- (-0.326,-2.347) -- (-0.645,-2.331) -- (-0.947,-2.305) -- (-1.226,-2.270) -- (-1.475,-2.226) -- cycle;
\fill[blue!8, even odd rule] (-1.820,-2.131) -- (-1.895,-2.100) -- (-1.985,-2.053) -- (-2.029,-2.020) -- (-2.062,-1.986) -- (-2.081,-1.951) -- (-2.088,-1.916) -- (-2.081,-1.881) -- (-2.062,-1.846) -- (-2.009,-1.796) -- (-0.917,-1.027) -- (-0.852,-0.969) -- (-0.800,-0.895) -- (-0.777,-0.835) -- (-0.768,-0.770) -- (-0.774,-0.702) -- (-0.794,-0.638) -- (-0.824,-0.583) -- (-0.862,-0.535) -- (-0.987,-0.418) -- (-0.986,-0.413) -- (-0.919,-0.452) -- (-0.842,-0.481) -- (-0.627,-0.529) -- (-0.313,-0.564) -- (-0.000,-0.575) -- (0.313,-0.564) -- (0.627,-0.529) -- (0.842,-0.481) -- (0.919,-0.452) -- (0.987,-0.413) -- (0.844,-0.556) -- (0.807,-0.611) -- (0.782,-0.671) -- (0.769,-0.735) -- (0.771,-0.801) -- (0.787,-0.866) -- (0.834,-0.948) -- (0.874,-0.991) -- (0.946,-1.048) -- (2.009,-1.796) -- (2.047,-1.829) -- (2.073,-1.864) -- (2.088,-1.916) -- (2.081,-1.951) -- (2.062,-1.986) -- (2.029,-2.020) -- (1.985,-2.053) -- (1.859,-2.116) -- (1.688,-2.174) -- (1.475,-2.226) -- (1.226,-2.270) -- (0.873,-2.313) -- (0.566,-2.336) -- (0.245,-2.350) -- (-0.164,-2.351) -- (-0.487,-2.341) -- (-0.798,-2.320) -- (-1.090,-2.289) -- (-1.416,-2.237) -- (-1.638,-2.187) -- cycle;
\fill[black!15, even odd rule] (-1.168,-0.175) -- (-1.191,-0.102) -- (-1.202,-0.034) -- (-1.202,0.025) -- (-1.193,0.093) -- (-1.174,0.160) -- (-1.142,0.230) -- (-1.101,0.296) -- (-1.055,0.351) -- (-1.052,0.346) -- (-1.069,0.320) -- (-1.071,0.296) -- (-1.060,0.271) -- (-1.033,0.246) -- (-0.937,0.198) -- (-0.817,0.162) -- (-0.632,0.126) -- (-0.450,0.104) -- (-0.210,0.088) -- (-0.000,0.083) -- (0.210,0.088) -- (0.450,0.104) -- (0.632,0.126) -- (0.817,0.162) -- (0.937,0.198) -- (1.033,0.246) -- (1.060,0.271) -- (1.071,0.296) -- (1.069,0.320) -- (1.052,0.346) -- (1.055,0.351) -- (1.116,0.273) -- (1.161,0.191) -- (1.189,0.110) -- (1.202,0.034) -- (1.198,-0.069) -- (1.168,-0.175) -- (1.110,-0.282) -- (1.037,-0.367) -- (0.959,-0.408) -- (0.818,-0.452) -- (0.632,-0.488) -- (0.412,-0.514) -- (0.126,-0.529) -- (-0.210,-0.526) -- (-0.526,-0.502) -- (-0.761,-0.465) -- (-0.939,-0.416) -- (-1.037,-0.367) -- (-1.110,-0.282) -- cycle;
\draw[black!60, thin] plot[smooth, tension=0.5] coordinates {(0.000,0.080) (0.061,0.081) (0.122,0.082) (0.182,0.084) (0.246,0.086) (0.310,0.090) (0.372,0.094) (0.433,0.099) (0.497,0.106) (0.558,0.113) (0.621,0.121) (0.680,0.131) (0.740,0.142) (0.799,0.154) (0.859,0.169) (0.917,0.187) (0.975,0.209) (1.028,0.239) (1.070,0.283) (1.060,0.342) (1.057,0.345)};
\draw[black!60, thin] plot[smooth, tension=0.5] coordinates {(-1.058,0.344) (-1.071,0.284) (-1.030,0.240) (-0.976,0.210) (-0.919,0.188) (-0.862,0.170) (-0.802,0.155) (-0.740,0.142) (-0.680,0.131) (-0.621,0.121) (-0.558,0.113) (-0.497,0.106) (-0.433,0.099) (-0.372,0.094) (-0.310,0.090) (-0.246,0.086) (-0.182,0.084) (-0.122,0.082) (-0.061,0.081) (-0.000,0.080)};
\draw[black!60, thin] plot[smooth, tension=0.5] coordinates {(0.000,-0.528) (0.061,-0.527) (0.122,-0.526) (0.182,-0.524) (0.246,-0.522) (0.310,-0.518) (0.372,-0.514) (0.433,-0.509) (0.497,-0.502) (0.558,-0.495) (0.621,-0.487) (0.680,-0.477) (0.740,-0.466) (0.799,-0.454) (0.859,-0.439) (0.917,-0.421) (0.975,-0.398) (1.028,-0.369) (1.058,-0.344)};
\draw[black!60, thin] plot[smooth, tension=0.5] coordinates {(-1.057,-0.345) (-1.009,-0.381) (-0.954,-0.407) (-0.897,-0.427) (-0.839,-0.444) (-0.780,-0.458) (-0.719,-0.470) (-0.658,-0.481) (-0.597,-0.490) (-0.538,-0.498) (-0.476,-0.504) (-0.416,-0.510) (-0.355,-0.515) (-0.292,-0.519) (-0.228,-0.522) (-0.164,-0.525) (-0.103,-0.527) (-0.042,-0.527) (-0.000,-0.528)};
\draw[red!50, thin] plot[smooth, tension=0.5] coordinates {(0.000,0.685) (0.000,0.634) (0.000,0.583) (0.000,0.531) (0.000,0.480) (0.000,0.428) (0.000,0.377) (0.000,0.326) (0.000,0.274) (0.000,0.221) (0.000,0.168) (0.000,0.115) (0.000,0.063) (0.000,0.013) (0.000,-0.039) (0.000,-0.093) (0.000,-0.144) (0.000,-0.195) (0.000,-0.246) (0.000,-0.299) (0.000,-0.350) (0.000,-0.401) (0.000,-0.452) (0.000,-0.502) (0.000,-0.554) (0.000,-0.605) (0.000,-0.657) (0.000,-0.708) (0.000,-0.760) (0.000,-0.812) (0.000,-0.866) (0.000,-0.918) (0.000,-0.969) (0.000,-1.020) (0.000,-1.070) (0.000,-1.123) (0.000,-1.174) (0.000,-1.224) (0.000,-1.275) (0.000,-1.328) (0.000,-1.379) (0.000,-1.430) (0.000,-1.481) (0.000,-1.533) (0.000,-1.584) (0.000,-1.635) (0.000,-1.686) (0.000,-1.738) (0.000,-1.789) (0.000,-1.840) (0.000,-1.892) (0.000,-1.943) (0.000,-1.994) (0.000,-2.045) (0.000,-2.097) (0.000,-2.148) (0.000,-2.199) (0.000,-2.250) (0.000,-2.302) (0.000,-2.350)};
\draw[red!50, thin] plot[smooth, tension=0.5] coordinates {(0.000,0.685) (0.050,0.669) (0.097,0.650) (0.145,0.629) (0.189,0.605) (0.233,0.579) (0.277,0.550) (0.318,0.520) (0.359,0.487) (0.396,0.453) (0.433,0.416) (0.469,0.377) (0.502,0.337) (0.534,0.295) (0.562,0.253) (0.589,0.208) (0.614,0.160) (0.636,0.114) (0.656,0.065) (0.672,0.014) (0.685,-0.035) (0.695,-0.085) (0.702,-0.135) (0.705,-0.186) (0.704,-0.239) (0.699,-0.291) (0.690,-0.340) (0.676,-0.390) (0.657,-0.438) (0.633,-0.484) (0.603,-0.527) (0.572,-0.566) (0.541,-0.606) (0.512,-0.650) (0.488,-0.696) (0.470,-0.745) (0.458,-0.796) (0.451,-0.847) (0.450,-0.901) (0.453,-0.951) (0.460,-1.002) (0.472,-1.051) (0.489,-1.102) (0.509,-1.150) (0.531,-1.195) (0.558,-1.241) (0.586,-1.285) (0.613,-1.328) (0.641,-1.371) (0.669,-1.414) (0.697,-1.457) (0.726,-1.500) (0.754,-1.543) (0.782,-1.586) (0.810,-1.629) (0.838,-1.672) (0.866,-1.714) (0.894,-1.757) (0.922,-1.800) (0.950,-1.843) (0.978,-1.886) (1.006,-1.929) (1.034,-1.972) (1.062,-2.015) (1.090,-2.058) (1.118,-2.101) (1.146,-2.144) (1.174,-2.187) (1.202,-2.230) (1.226,-2.267)};
\draw[red!50, thin] plot[smooth, tension=0.5] coordinates {(0.000,0.685) (0.053,0.680) (0.105,0.675) (0.158,0.667) (0.210,0.659) (0.262,0.649) (0.314,0.637) (0.366,0.624) (0.417,0.609) (0.468,0.593) (0.519,0.575) (0.569,0.555) (0.619,0.534) (0.668,0.510) (0.712,0.487) (0.756,0.462) (0.798,0.435) (0.840,0.407) (0.880,0.376) (0.919,0.344) (0.956,0.309) (0.991,0.272) (1.024,0.233) (1.054,0.192) (1.080,0.148) (1.103,0.102) (1.120,0.055) (1.133,0.006) (1.140,-0.044) (1.141,-0.098) (1.134,-0.152) (1.121,-0.203) (1.102,-0.252) (1.078,-0.298) (1.049,-0.341) (1.015,-0.381) (0.979,-0.418) (0.941,-0.452) (0.902,-0.486) (0.865,-0.522) (0.829,-0.559) (0.797,-0.599) (0.769,-0.644) (0.748,-0.692) (0.734,-0.743) (0.728,-0.796) (0.730,-0.850) (0.739,-0.900) (0.755,-0.948) (0.777,-0.994) (0.805,-1.037) (0.837,-1.077) (0.872,-1.113) (0.909,-1.148) (0.947,-1.181) (0.986,-1.214) (1.025,-1.246) (1.064,-1.279) (1.103,-1.312) (1.142,-1.344) (1.180,-1.377) (1.219,-1.409) (1.258,-1.442) (1.297,-1.475) (1.336,-1.507) (1.375,-1.540) (1.413,-1.572) (1.452,-1.605) (1.491,-1.637) (1.530,-1.670) (1.569,-1.703) (1.608,-1.735) (1.646,-1.768) (1.685,-1.800) (1.724,-1.833) (1.763,-1.866) (1.802,-1.898) (1.841,-1.931) (1.879,-1.963) (1.918,-1.996) (1.957,-2.028) (1.983,-2.050)};
\draw[red!50, thin] plot[smooth, tension=0.5] coordinates {(0.000,0.685) (0.053,0.688) (0.105,0.689) (0.158,0.689) (0.210,0.687) (0.262,0.684) (0.314,0.679) (0.366,0.673) (0.417,0.665) (0.468,0.656) (0.519,0.645) (0.569,0.632) (0.619,0.617) (0.668,0.600) (0.716,0.582) (0.763,0.561) (0.809,0.538) (0.853,0.512) (0.874,0.500)};
\draw[red!50, thin] plot[smooth, tension=0.5] coordinates {(0.000,0.685) (0.050,0.697) (0.100,0.706) (0.150,0.712) (0.202,0.714) (0.253,0.712) (0.292,0.707)};
\draw[red!50, thin] plot[smooth, tension=0.5] coordinates {(0.000,0.685) (0.000,0.729)};
\draw[red!50, thin] plot[smooth, tension=0.5] coordinates {(-0.000,0.685) (-0.050,0.697) (-0.100,0.706) (-0.150,0.712) (-0.202,0.714) (-0.253,0.712) (-0.292,0.707)};
\draw[red!50, thin] plot[smooth, tension=0.5] coordinates {(-0.000,0.685) (-0.053,0.688) (-0.105,0.689) (-0.158,0.689) (-0.210,0.687) (-0.262,0.684) (-0.314,0.679) (-0.366,0.673) (-0.417,0.665) (-0.468,0.656) (-0.519,0.645) (-0.569,0.632) (-0.619,0.617) (-0.668,0.600) (-0.716,0.582) (-0.763,0.561) (-0.809,0.538) (-0.853,0.512) (-0.874,0.500)};
\draw[red!50, thin] plot[smooth, tension=0.5] coordinates {(-0.000,0.685) (-0.053,0.680) (-0.105,0.675) (-0.158,0.667) (-0.210,0.659) (-0.262,0.649) (-0.314,0.637) (-0.366,0.624) (-0.417,0.609) (-0.468,0.593) (-0.519,0.575) (-0.569,0.555) (-0.619,0.534) (-0.668,0.510) (-0.712,0.487) (-0.756,0.462) (-0.798,0.435) (-0.840,0.407) (-0.880,0.376) (-0.919,0.344) (-0.956,0.309) (-0.991,0.272) (-1.024,0.233) (-1.054,0.192) (-1.080,0.148) (-1.103,0.102) (-1.120,0.055) (-1.133,0.006) (-1.140,-0.044) (-1.141,-0.098) (-1.134,-0.152) (-1.121,-0.203) (-1.102,-0.252) (-1.078,-0.298) (-1.049,-0.341) (-1.015,-0.381) (-0.979,-0.418) (-0.941,-0.452) (-0.902,-0.486) (-0.865,-0.522) (-0.829,-0.559) (-0.797,-0.599) (-0.769,-0.644) (-0.748,-0.692) (-0.734,-0.743) (-0.728,-0.796) (-0.730,-0.850) (-0.739,-0.900) (-0.755,-0.948) (-0.777,-0.994) (-0.805,-1.037) (-0.837,-1.077) (-0.872,-1.113) (-0.909,-1.148) (-0.947,-1.181) (-0.986,-1.214) (-1.025,-1.246) (-1.064,-1.279) (-1.103,-1.312) (-1.142,-1.344) (-1.180,-1.377) (-1.219,-1.409) (-1.258,-1.442) (-1.297,-1.475) (-1.336,-1.507) (-1.375,-1.540) (-1.413,-1.572) (-1.452,-1.605) (-1.491,-1.637) (-1.530,-1.670) (-1.569,-1.703) (-1.608,-1.735) (-1.646,-1.768) (-1.685,-1.800) (-1.724,-1.833) (-1.763,-1.866) (-1.802,-1.898) (-1.841,-1.931) (-1.879,-1.963) (-1.918,-1.996) (-1.957,-2.028) (-1.983,-2.050)};
\draw[red!50, thin] plot[smooth, tension=0.5] coordinates {(-0.000,0.685) (-0.050,0.669) (-0.097,0.650) (-0.145,0.629) (-0.189,0.605) (-0.233,0.579) (-0.277,0.550) (-0.318,0.520) (-0.359,0.487) (-0.396,0.453) (-0.433,0.416) (-0.469,0.377) (-0.502,0.337) (-0.534,0.295) (-0.562,0.253) (-0.589,0.208) (-0.614,0.160) (-0.636,0.114) (-0.656,0.065) (-0.672,0.014) (-0.685,-0.035) (-0.695,-0.085) (-0.702,-0.135) (-0.705,-0.186) (-0.704,-0.239) (-0.699,-0.291) (-0.690,-0.340) (-0.676,-0.390) (-0.657,-0.438) (-0.633,-0.484) (-0.603,-0.527) (-0.572,-0.566) (-0.541,-0.606) (-0.512,-0.650) (-0.488,-0.696) (-0.470,-0.745) (-0.458,-0.796) (-0.451,-0.847) (-0.450,-0.901) (-0.453,-0.951) (-0.460,-1.002) (-0.472,-1.051) (-0.489,-1.102) (-0.509,-1.150) (-0.531,-1.195) (-0.558,-1.241) (-0.586,-1.285) (-0.613,-1.328) (-0.641,-1.371) (-0.669,-1.414) (-0.697,-1.457) (-0.726,-1.500) (-0.754,-1.543) (-0.782,-1.586) (-0.810,-1.629) (-0.838,-1.672) (-0.866,-1.714) (-0.894,-1.757) (-0.922,-1.800) (-0.950,-1.843) (-0.978,-1.886) (-1.006,-1.929) (-1.034,-1.972) (-1.062,-2.015) (-1.090,-2.058) (-1.118,-2.101) (-1.146,-2.144) (-1.174,-2.187) (-1.202,-2.230) (-1.226,-2.267)};
\draw[black!75, thick] plot[smooth, tension=0.5] coordinates {(-1.999,-1.793) (-1.964,-1.768) (-1.930,-1.744) (-1.895,-1.720) (-1.860,-1.695) (-1.826,-1.671) (-1.791,-1.647) (-1.756,-1.622) (-1.722,-1.598) (-1.687,-1.573) (-1.652,-1.549) (-1.618,-1.525) (-1.583,-1.500) (-1.548,-1.476) (-1.514,-1.452) (-1.479,-1.427) (-1.444,-1.403) (-1.410,-1.378) (-1.375,-1.354) (-1.341,-1.330) (-1.306,-1.305) (-1.271,-1.281) (-1.237,-1.257) (-1.202,-1.232) (-1.167,-1.208) (-1.133,-1.183) (-1.098,-1.159) (-1.063,-1.135) (-1.029,-1.110) (-0.994,-1.086) (-0.960,-1.062) (-0.927,-1.038) (-0.894,-1.012) (-0.864,-0.985) (-0.836,-0.955) (-0.812,-0.922) (-0.792,-0.885) (-0.778,-0.846) (-0.768,-0.806) (-0.765,-0.764) (-0.767,-0.722) (-0.776,-0.681) (-0.789,-0.641) (-0.807,-0.605) (-0.829,-0.571) (-0.855,-0.539) (-0.883,-0.508) (-0.913,-0.480) (-0.944,-0.453) (-0.975,-0.425) (-1.007,-0.396) (-1.037,-0.367) (-1.065,-0.336) (-1.091,-0.303) (-1.115,-0.269) (-1.137,-0.233) (-1.156,-0.196) (-1.171,-0.158) (-1.184,-0.119) (-1.193,-0.079) (-1.198,-0.038) (-1.200,0.003) (-1.198,0.044) (-1.192,0.085) (-1.182,0.125) (-1.169,0.165) (-1.153,0.203) (-1.133,0.239) (-1.112,0.275) (-1.087,0.308) (-1.061,0.341) (-1.032,0.372) (-1.002,0.401) (-0.970,0.429) (-0.937,0.455) (-0.902,0.480) (-0.869,0.503) (-0.834,0.524) (-0.799,0.544) (-0.762,0.563) (-0.725,0.581) (-0.686,0.598) (-0.646,0.614) (-0.608,0.628) (-0.569,0.642) (-0.529,0.654) (-0.490,0.665) (-0.450,0.676) (-0.407,0.685) (-0.367,0.694) (-0.327,0.701) (-0.284,0.708) (-0.242,0.714) (-0.199,0.719) (-0.155,0.723) (-0.106,0.726) (-0.063,0.728) (0.039,0.728) (0.080,0.727) (0.128,0.725) (0.171,0.721) (0.212,0.717) (0.253,0.712) (0.294,0.707) (0.336,0.700) (0.379,0.691) (0.419,0.683) (0.461,0.673) (0.501,0.662) (0.542,0.650) (0.582,0.637) (0.621,0.624) (0.658,0.609) (0.698,0.593) (0.736,0.575) (0.774,0.557) (0.810,0.538) (0.845,0.517) (0.879,0.496) (0.912,0.473) (0.947,0.448) (0.980,0.421) (1.011,0.393) (1.041,0.363) (1.069,0.332) (1.094,0.299) (1.118,0.265) (1.139,0.229) (1.158,0.192) (1.173,0.153) (1.185,0.114) (1.194,0.074) (1.199,0.033) (1.200,-0.008) (1.197,-0.050) (1.191,-0.090) (1.181,-0.130) (1.167,-0.169) (1.151,-0.207) (1.131,-0.244) (1.109,-0.279) (1.084,-0.312) (1.057,-0.345) (1.029,-0.375) (0.998,-0.404) (0.968,-0.431) (0.936,-0.459) (0.906,-0.486) (0.877,-0.514) (0.849,-0.545) (0.824,-0.578) (0.803,-0.613) (0.786,-0.650) (0.773,-0.690) (0.766,-0.731) (0.765,-0.773) (0.770,-0.815) (0.780,-0.855) (0.796,-0.893) (0.817,-0.929) (0.841,-0.961) (0.869,-0.991) (0.900,-1.017) (0.932,-1.042) (0.966,-1.066) (1.000,-1.090) (1.035,-1.115) (1.069,-1.139) (1.104,-1.163) (1.139,-1.188) (1.173,-1.212) (1.208,-1.236) (1.243,-1.261) (1.277,-1.285) (1.312,-1.310) (1.347,-1.334) (1.381,-1.358) (1.416,-1.383) (1.451,-1.407) (1.485,-1.431) (1.520,-1.456) (1.555,-1.480) (1.589,-1.505) (1.624,-1.529) (1.658,-1.553) (1.693,-1.578) (1.728,-1.602) (1.762,-1.626) (1.797,-1.651) (1.832,-1.675) (1.866,-1.700) (1.901,-1.724) (1.936,-1.748) (1.970,-1.773) (1.999,-1.793)};
\draw[black!75, thick] plot[smooth, tension=0.5] coordinates {(0.000,-2.350) (0.064,-2.349) (0.127,-2.349) (0.191,-2.348) (0.254,-2.346) (0.317,-2.345) (0.380,-2.342) (0.442,-2.340) (0.504,-2.337) (0.566,-2.333) (0.627,-2.330) (0.687,-2.325) (0.756,-2.320) (0.823,-2.314) (0.889,-2.308) (0.955,-2.302) (1.019,-2.294) (1.082,-2.287) (1.143,-2.279) (1.203,-2.270) (1.269,-2.260) (1.333,-2.249) (1.395,-2.238) (1.455,-2.227) (1.519,-2.213) (1.580,-2.199) (1.643,-2.183) (1.703,-2.166) (1.763,-2.148) (1.824,-2.126) (1.882,-2.103) (1.937,-2.077) (1.989,-2.047) (2.038,-2.008) (2.075,-1.960) (2.083,-1.899) (2.056,-1.845) (2.012,-1.802) (1.999,-1.793)};
\draw[black!75, thick] plot[smooth, tension=0.5] coordinates {(-2.002,-1.795) (-2.048,-1.835) (-2.081,-1.888) (-2.079,-1.948) (-2.047,-1.999) (-1.999,-2.039) (-1.947,-2.072) (-1.890,-2.099) (-1.832,-2.123) (-1.773,-2.144) (-1.713,-2.163) (-1.654,-2.180) (-1.591,-2.196) (-1.531,-2.210) (-1.468,-2.224) (-1.409,-2.236) (-1.347,-2.247) (-1.284,-2.258) (-1.218,-2.268) (-1.158,-2.277) (-1.097,-2.285) (-1.035,-2.293) (-0.971,-2.300) (-0.906,-2.307) (-0.840,-2.313) (-0.773,-2.319) (-0.705,-2.324) (-0.644,-2.328) (-0.583,-2.332) (-0.522,-2.336) (-0.460,-2.339) (-0.398,-2.342) (-0.335,-2.344) (-0.272,-2.346) (-0.209,-2.348) (-0.145,-2.349) (-0.082,-2.349) (-0.018,-2.350) (-0.000,-2.350)};
\draw[green!45!black, line width=1pt] plot[smooth, tension=0.5] coordinates {(0.000,-0.249) (0.063,-0.249) (0.125,-0.248) (0.188,-0.246) (0.249,-0.244) (0.311,-0.241) (0.371,-0.237) (0.435,-0.233) (0.498,-0.227) (0.559,-0.221) (0.623,-0.213) (0.684,-0.205) (0.747,-0.195) (0.807,-0.185) (0.867,-0.173) (0.926,-0.159) (0.986,-0.142) (1.044,-0.123) (1.100,-0.099) (1.154,-0.069) (1.194,-0.024) (1.200,0.001)};
\draw[green!45!black, thin, densely dashed, opacity=0.3] plot[smooth, tension=0.5] coordinates {(1.200,0.001) (1.172,0.054) (1.122,0.088) (1.067,0.114) (1.009,0.135) (0.949,0.153) (0.888,0.168) (0.826,0.181) (0.763,0.193) (0.701,0.202) (0.640,0.211) (0.577,0.219) (0.517,0.225) (0.454,0.231) (0.391,0.236) (0.331,0.240) (0.270,0.243) (0.208,0.246) (0.146,0.248) (0.084,0.249) (0.021,0.249) (-0.042,0.249) (-0.105,0.249) (-0.167,0.247) (-0.229,0.245) (-0.290,0.242) (-0.351,0.239) (-0.415,0.234) (-0.478,0.229) (-0.540,0.223) (-0.600,0.216) (-0.662,0.208) (-0.722,0.199) (-0.783,0.189) (-0.845,0.177) (-0.906,0.164) (-0.965,0.148) (-1.023,0.130) (-1.081,0.108) (-1.135,0.081) (-1.182,0.043) (-1.200,0.000)};
\draw[green!45!black, line width=1pt] plot[smooth, tension=0.5] coordinates {(-1.200,0.000) (-1.172,-0.054) (-1.122,-0.088) (-1.067,-0.114) (-1.009,-0.135) (-0.949,-0.153) (-0.888,-0.168) (-0.826,-0.181) (-0.763,-0.193) (-0.701,-0.202) (-0.640,-0.211) (-0.577,-0.219) (-0.517,-0.225) (-0.454,-0.231) (-0.391,-0.236) (-0.331,-0.240) (-0.270,-0.243) (-0.208,-0.246) (-0.146,-0.248) (-0.084,-0.249) (-0.021,-0.249) (-0.000,-0.249)};
\draw[green!45!black, thin, densely dashed, opacity=0.3] plot[smooth, tension=0.5] coordinates {(1.105,0.097) (1.065,0.094) (1.024,0.092) (0.982,0.089) (0.940,0.087) (0.899,0.085) (0.857,0.083) (0.816,0.081) (0.774,0.080) (0.733,0.079) (0.691,0.079) (0.650,0.078) (0.609,0.078) (0.567,0.079) (0.524,0.079) (0.484,0.080) (0.443,0.081) (0.402,0.082) (0.360,0.084) (0.318,0.085) (0.275,0.087) (0.232,0.089) (0.189,0.092) (0.146,0.095) (0.102,0.098) (0.058,0.101) (0.015,0.104) (-0.029,0.108) (-0.073,0.111) (-0.117,0.115) (-0.160,0.120) (-0.204,0.124) (-0.247,0.128) (-0.290,0.133) (-0.332,0.138) (-0.374,0.142) (-0.415,0.147) (-0.456,0.152) (-0.496,0.157) (-0.539,0.162) (-0.582,0.167) (-0.623,0.172) (-0.663,0.177) (-0.705,0.182) (-0.746,0.187) (-0.788,0.191) (-0.828,0.196) (-0.869,0.200) (-0.910,0.203) (-0.950,0.206) (-0.992,0.207) (-1.033,0.208) (-1.073,0.206) (-1.113,0.200) (-1.151,0.185) (-1.169,0.164)};
\draw[green!45!black, line width=1pt] plot[smooth, tension=0.5] coordinates {(-1.169,0.164) (-1.165,0.124) (-1.143,0.090) (-1.115,0.060) (-1.085,0.033) (-1.052,0.007) (-1.018,-0.017) (-0.985,-0.040) (-0.950,-0.061) (-0.915,-0.082) (-0.878,-0.102) (-0.841,-0.121) (-0.805,-0.139) (-0.768,-0.157) (-0.728,-0.175) (-0.691,-0.191) (-0.652,-0.206) (-0.612,-0.222) (-0.571,-0.237) (-0.533,-0.250) (-0.494,-0.263) (-0.454,-0.275) (-0.413,-0.287) (-0.372,-0.298) (-0.331,-0.309) (-0.289,-0.319) (-0.246,-0.329) (-0.203,-0.337) (-0.160,-0.345) (-0.117,-0.353) (-0.073,-0.359) (-0.030,-0.365) (0.014,-0.370) (0.058,-0.374) (0.101,-0.377) (0.145,-0.380) (0.188,-0.382) (0.231,-0.383) (0.274,-0.383) (0.316,-0.382) (0.358,-0.381) (0.400,-0.378) (0.441,-0.375) (0.481,-0.371) (0.521,-0.366) (0.564,-0.360) (0.606,-0.353) (0.647,-0.345) (0.687,-0.336) (0.726,-0.326) (0.767,-0.314) (0.807,-0.302) (0.845,-0.288) (0.885,-0.272) (0.922,-0.255) (0.960,-0.236) (0.996,-0.216) (1.031,-0.193) (1.064,-0.168) (1.096,-0.141) (1.125,-0.111) (1.151,-0.078) (1.171,-0.044) (1.186,-0.006) (1.194,0.035) (1.193,0.076) (1.192,0.085)};
\draw[green!45!black, thin, densely dashed, opacity=0.3] plot[smooth, tension=0.5] coordinates {(1.192,0.085) (1.179,0.124) (1.159,0.160) (1.133,0.190) (1.102,0.217) (1.067,0.241) (1.031,0.260) (0.994,0.276) (0.955,0.290) (0.917,0.301) (0.876,0.311) (0.835,0.318) (0.796,0.324) (0.754,0.329) (0.712,0.333) (0.671,0.335) (0.629,0.336) (0.585,0.336) (0.544,0.335) (0.503,0.333) (0.460,0.331) (0.417,0.327) (0.376,0.324) (0.336,0.319) (0.294,0.314) (0.253,0.308) (0.211,0.301) (0.168,0.294) (0.126,0.287) (0.083,0.278) (0.040,0.270) (-0.003,0.261) (-0.046,0.251) (-0.089,0.241) (-0.131,0.231) (-0.174,0.220) (-0.216,0.210) (-0.258,0.199) (-0.300,0.188) (-0.341,0.176) (-0.382,0.165) (-0.422,0.154) (-0.459,0.143)};
\draw[green!45!black, thin, densely dashed, opacity=0.3] plot[smooth, tension=0.5] coordinates {(-0.706,0.202) (-0.742,0.179) (-0.775,0.155) (-0.810,0.130) (-0.843,0.104) (-0.874,0.079) (-0.906,0.052) (-0.937,0.023) (-0.967,-0.007) (-0.996,-0.038) (-1.023,-0.069) (-1.048,-0.102) (-1.072,-0.136) (-1.092,-0.172) (-1.108,-0.210) (-1.115,-0.249) (-1.105,-0.284)};
\draw[green!45!black, line width=1pt] plot[smooth, tension=0.5] coordinates {(-1.105,-0.284) (-1.071,-0.307) (-1.032,-0.312) (-0.991,-0.310) (-0.950,-0.304) (-0.909,-0.295) (-0.870,-0.285) (-0.830,-0.273) (-0.789,-0.261) (-0.749,-0.248) (-0.709,-0.235) (-0.670,-0.223) (-0.629,-0.209) (-0.591,-0.196) (-0.551,-0.183) (-0.510,-0.170) (-0.471,-0.158) (-0.432,-0.145) (-0.392,-0.133) (-0.352,-0.121) (-0.311,-0.109) (-0.269,-0.097) (-0.227,-0.085) (-0.185,-0.074) (-0.143,-0.063) (-0.100,-0.052) (-0.057,-0.042) (-0.014,-0.033) (0.029,-0.023) (0.071,-0.014) (0.114,-0.006) (0.157,0.002) (0.199,0.009) (0.242,0.016) (0.284,0.022) (0.325,0.028) (0.367,0.033) (0.407,0.038) (0.448,0.042) (0.488,0.045) (0.531,0.048) (0.573,0.051) (0.615,0.053) (0.656,0.054) (0.696,0.054) (0.738,0.054) (0.779,0.053) (0.822,0.051) (0.863,0.048) (0.906,0.043) (0.946,0.038) (0.986,0.032) (1.026,0.024) (1.065,0.013) (1.104,0.001) (1.142,-0.016) (1.175,-0.038) (1.194,-0.074) (1.194,-0.074)};
\draw[green!45!black, thin, densely dashed, opacity=0.3] plot[smooth, tension=0.5] coordinates {(1.194,-0.074) (1.165,-0.102) (1.126,-0.113) (1.086,-0.118) (1.044,-0.120) (1.003,-0.119) (0.963,-0.116) (0.923,-0.112) (0.882,-0.106) (0.843,-0.100) (0.801,-0.093) (0.761,-0.085) (0.720,-0.076) (0.681,-0.067) (0.640,-0.057) (0.599,-0.046) (0.559,-0.035) (0.520,-0.024) (0.479,-0.012) (0.438,0.001) (0.400,0.013) (0.361,0.026) (0.322,0.039) (0.282,0.052) (0.242,0.066) (0.201,0.080) (0.161,0.095) (0.120,0.110) (0.079,0.125) (0.038,0.140) (-0.004,0.155) (-0.045,0.170) (-0.086,0.186) (-0.127,0.201) (-0.168,0.216) (-0.208,0.231) (-0.249,0.245) (-0.288,0.259) (-0.328,0.272) (-0.366,0.285) (-0.404,0.298) (-0.445,0.310) (-0.485,0.322) (-0.524,0.333) (-0.565,0.343) (-0.605,0.353) (-0.647,0.361) (-0.687,0.368) (-0.729,0.374) (-0.771,0.378) (-0.811,0.380) (-0.852,0.381) (-0.892,0.378) (-0.934,0.373) (-0.974,0.364) (-1.012,0.351) (-1.048,0.332) (-1.082,0.308) (-1.110,0.277) (-1.115,0.270)};
\fill[blue!80!black] (0,0.685) circle (1.3pt);
\node[blue!80!black, above, font=\small] at (0,0.7) {$x_0$};
\draw[black!70, thin] (-1.18,0.02) -- (-1.5,0.2) node[left, font=\small] {$\pi_N(\overline\uu)$};
\draw[green!45!black, thin] (1.17,-0.06) -- (1.55,0.15) node[right, font=\small, align=left] {trapped\\[-2pt]geodesics};
\node[font=\small] at (-1.2,0.75) {$N$};
\end{tikzpicture}
\caption{Left: on the round sphere, Assumption~\ref{assumption_null_cone_foliation} holds if $x_0,x_S\notin\pi_N(\overline\uu)$ and $\overline\uu$ is short in time. Right: the cap of $N$ containing $x_0$ and $\pi_N(\overline\uu)$ has positive, non-constant curvature, and $\pi_N(\overline\uu)$ contains an open set of trapped geodesics, yet Assumption~\ref{assumption_null_cone_foliation} holds whenever $x_0 \notin \pi_N(\overline \uu)$.}
\label{fig:flare}
\end{figure}

\subsubsection{Positive curvature and trapping}

An important point about Assumption~\ref{assumption_null_cone_foliation} is that it only constrains the null geodesics emanating from $\gamma$, and allows positive curvature, conjugate points and trapping elsewhere. A first example is the round sphere $N=S^2$. If there exists $x_0\in S^2$ such that neither $x_0$ nor its antipode $x_S$ lies in $\pi_N(\overline\uu)$, then Assumption~\ref{assumption_null_cone_foliation} holds provided that $\overline\uu$ is short in time. This follows from Lemma~\ref{lem:ultrastatic-foliation} combined with Remark~\ref{rem_local_ultrastatic}.

Moreover, there is a large family of examples with positive curvature and no time restriction on $\overline\uu$. Let $N\subset\R^3$ be a smooth surface of revolution, obtained by rotating around the vertical axis a curve $r\mapsto(\rho(r),z(r))$, $r\ge0$, parametrized by arclength, with $\rho(0)=0$ and $\rho>0$ on $(0,\infty)$. In polar coordinates $(r,\theta)$ around the pole $x_0=\{r=0\}$, its metric is
\[
h_N=dr^2+\rho(r)^2\,d\theta^2 .
\]
The geodesics from $x_0$ are the meridians $\theta=\text{const}$, and since $\rho>0$ they never meet again, so $\exp_{x_0}$ is a global diffeomorphism, and Assumption~\ref{assumption_null_cone_foliation} holds whenever $x_0\notin\pi_N(\overline\uu)$. If $\rho$ has a non-degenerate local maximum at some $r_0>0$, then $N$ has positive curvature near the circle $\{r=r_0\}$, which is a closed geodesic, and by Clairaut's relation \cite[Section~\mbox{4-4}]{docarmo2016} an open set of geodesics stays forever close to it. Thus $\pi_N(\overline\uu)$ may contain positive curvature, conjugate points and an open set of trapped geodesics, see Figure~\ref{fig:flare}. In fact, an arbitrarily large part of $N$ may have positive curvature: all of $N$ except a thin annulus around a circle $\{r=r_1\}$ where $\rho$ has a local minimum.

\subsection{Previous literature}

Similarly to the problem in a fixed conformal class, as formulated in Remark~\ref{rem_conformal}, recovering a Riemannian metric $g=c\,g_0$, with $g_0$ known, reduces to recovering a potential. In the elliptic case, the seminal result \cite{sylvester1987} recovers $c$ when $g_0$ is the Euclidean metric, and \cite{dos-santos-ferreira2009,dos-santos-ferreira2016,carstea2023} treat $g_0=dt^2+h$ under geometric conditions on $h$. Local rigidity was recently proved in \cite{lin2026rigidity,munozthon2026rigidity}: for $n\ge3$, a metric close to the Euclidean one \cite{lin2026rigidity}, or more generally to a conformally Euclidean one \cite{munozthon2026rigidity}, and with the same Dirichlet-to-Neumann map is its pullback by a diffeomorphism fixing the boundary. A complete solution of the elliptic anisotropic Calder\'on problem, that is, the recovery of a smooth metric up to such a diffeomorphism, was recently announced in \cite{openai2026calderon}. In dimension $n\ge3$, the formally determined elliptic problem is largely open. 

Let us now turn to the hyperbolic case. 
For time-dependent potentials in Minkowski space, uniqueness goes back to \cite{stefanov1989uniqueness} for inverse scattering. Stability estimates of logarithmic type were obtained in \cite{ben-aicha2015}. These results use the full Dirichlet-to-Neumann map, which is overdetermined. Beyond Minkowski space, a major approach is the Boundary Control method \cite{belishev1988}, which uses Tataru's unique continuation theorem for the wave equation across non-characteristic hypersurfaces \cite{tataru1995}. This theorem holds for coefficients depending real analytically on time, in particular for time-independent ones. Accordingly, the method extends to time-dependent coefficients only in the real analytic case \cite{eskin2007}. For smooth coefficients, unique continuation may fail across hypersurfaces that are not pseudoconvex \cite{alinhac1983}.

On Lorentzian manifolds, Stefanov and Yang \cite{stefanov2018} showed that the light ray transform of $q$ is determined by the Dirichlet-to-Neumann map. This reduces uniqueness to the injectivity of the light ray transform, which is known in real analytic \cite{stefanov2017support} and in stationary or static \cite{feizmohammadi2021light} geometries; see also \cite{kian2019a,feizmohammadi2021b} for product geometries $-dt^2+h$. In more general geometric settings, it was shown in~\cite{alexakis2022} that the potential can be recovered in the Lorentzian Calder\'on problem under curvature bounds that are stable under perturbations of the metric. This was achieved by combining elements of the Boundary Control method with unique continuation along a pseudoconvex foliation; see also \cite{alexakis2023,gomes2025}. For ultrastatic metrics $-dt^2+h$, these curvature bounds force $h$ to have nonpositive curvature, and \cite{alexakis2022} also assumes that null geodesics are non-trapped. In contrast, the results of this paper allow for positive curvature, conjugate points and trapped geodesics, see Section~\ref{sec_examples}. Note also that the Lorentzian Calder\'on problem is not solvable on certain noncompact domains \cite{oksanen2026counter}.

Another line of research concerns the rigidity problem. In~\cite{oksanen2024rigiditylorentziancalderonproblem} the metric itself was recovered, up to a diffeomorphism, when one of the two metrics is the Minkowski metric. This was subsequently generalized in~\cite{oksanen2026semiglobaluniquenesslorentziancalderon} when one of them is close to it. These are uniqueness results, and they rely on the proximity to the Minkowski metric. In the present paper the unknown is a potential. However, the metric $g$ is a general Lorentzian metric satisfying the null-cone foliation assumption, not assumed to be close to the Minkowski metric, and we prove Lipschitz stability. 

Finally, another major approach for uniqueness in inverse problems with a formally determined amount of data is the Bukhgeim--Klibanov method \cite{bukhgeim1981global}, which is based on Carleman estimates; see \cite{imanuvilov2001,bellassoued2017book} for hyperbolic equations with time-independent coefficients. This method gives Lipschitz stability with very few measurements, but at the price of an a priori positivity assumption on the initial data. On Lorentzian manifolds, the Bukhgeim--Klibanov method was used in~\cite{takase2023} to prove Lipschitz stability for an inverse source problem from boundary observations. Rakesh and Salo \cite{rakesh2020fixed,rakesh2020two} used incoming plane waves in this method, for fixed angle scattering. Krishnan, Rakesh and Senapati \cite{doi:10.1137/21M1400596} modified it further to recover coefficients depending on space and time in Minkowski space, from a formally determined family of incoming plane waves, one for each delay, carried by the null hyperplanes $\{t=\tau+x\cdot\omega\}$; see also \cite{krishnan2023point} for point sources. A crucial aspect of the null hyperplane approach is that it gives for free the positivity property that is needed in the Bukhgeim--Klibanov approach and hence works in less restrictive settings. Our work builds on this approach: it is a modification of the Bukhgeim--Klibanov method with distorted spherical waves, close to~\cite{doi:10.1137/21M1400596}, but in a general Lorentzian setting. Under the null-cone foliation assumption, the null hyperplanes of \cite{doi:10.1137/21M1400596} are replaced by the leaves of a null-cone foliation.

\subsection{Plan of the paper}

In Section~\ref{sec_progressive_wave} we prove the progressive wave expansion property of $V_\tau$. The fact that the wave $V_\tau$ admits this representation is a crucial consequence of the null-cone foliation assumption. Section~\ref{sec_carleman} is devoted to the Carleman estimate near a null cone, which is the tool propagating information from a null leaf to the data. In Section~\ref{sec_proof_main} we prove Theorem~\ref{thm_main}, and then Theorem~\ref{thm_calderon} by a gluing argument. Appendix~\ref{ap_lorentz} recalls facts from Lorentzian geometry, constructs the auxiliary metric $h$, and verifies Assumption~\ref{assumption_null_cone_foliation} in ultrastatic spacetimes. Appendix~\ref{ap_distrib_manifold} recalls facts on distributions on manifolds.

\bigskip

\noindent \textbf{AI Disclosure.} Claude Opus 5.5 (Anthropic) was used for literature research and proofreading. This led to a minor correction in the proof of Proposition~\ref{prop_prog_wave} and helped formulate the surfaces of revolution examples of Figure~\ref{fig:flare}. Claude assisted with the TikZ figures of the article. Proofs and text are the responsibility of the authors.

\bigskip

\noindent \textbf{Acknowledgements.} The authors were supported by the Research Council of Finland (Centre of Excellence in Inverse Modelling and Imaging and
FAME Flagship, grants 353096, 359182, 353091 and 359208, as well as a
project grant 347715) and by the European Research
Council of the European Union, grant 101086697 (LoCal).

\section{Progressive wave representation}
\label{sec_progressive_wave}

Given a null hypersurface $\n$, at each point $p \in \n $ the tangent space $T_p(\n)$ contains a null vector which is unique up to multiplication by a non zero scalar (see for instance~\cite[Lemma 28]{oneill1983}). We recall that the vector field $L$ is defined by
\begin{equation}
    \label{def_of_null_vector_L}
    L=-\nabla_g f.
\end{equation}
It is a smooth null vector field on $\dg$, tangent to every leaf $\mathcal H_\tau$, $\tau\in I$. Its integral curves are called the \textit{null generators} of the leaves. They are null geodesics since
$$
\nabla_LL=\frac12\nabla_g\bigl(g(\nabla f,\nabla f)\bigr)=0.
$$
We write
\begin{equation}
    P=\bg+q,
  \qquad
  \theta=\Box_g\varphi_\tau=\Box_gf,
  \label{eq:phi}
\end{equation}
where we recall that $\varphi_\tau$ is defined in~\eqref{def_of_quantities}. Note that $\theta$ does not depend on $\tau$.

\subsection{A monotonicity lemma}

The following lemma is a consequence of Assumption~\ref{assumption_null_cone_foliation}. It states that the null cones behave as one expects with respect to the causal structure: $\{f\ge s\}$ is the causal future of $\gamma(s)$ in $\dg$, and $f$ does not decrease along future-directed causal curves. The progressive wave representation below relies heavily on it.

\begin{lemma}\label{lem_monotone}
Under Assumption~\ref{assumption_null_cone_foliation}, the following hold.
\begin{enumerate}[label=(\alph*),leftmargin=*]
\item (Level sets) For every $s\in I$,
\begin{equation}\label{eq:levelsets}
J^+(\gamma(s))\cap\dg=\{f\ge s\},
\qquad
I^+(\gamma(s))\cap\dg=\{f>s\}.
\end{equation}

\item (Monotonicity) Let $x,y\in\dg$. Then
\begin{equation}\label{eq:mono}
y\in J^+(x)\ \Longrightarrow\ f(y)\ge f(x),
\qquad
y\in I^+(x)\ \Longrightarrow\ f(y)>f(x).
\end{equation}
In particular, for every $\tau\in I$,
\begin{equation*}
J^+(\Ht)\cap\dg\subset\{f\ge\tau\}.
\end{equation*}
\item (Time orientation) The vector field $L=-\nabla_gf$ is future-directed.
\end{enumerate}
\end{lemma}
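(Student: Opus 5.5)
The plan is to derive everything from two standard facts in the globally hyperbolic diamond $\diam$ (Remark~\ref{rem_D_globhyp}): for $p\in\diam$ the set $J^+(p)\cap\diam$ is closed in $\diam$ with interior $I^+(p)\cap\diam$, so $\partial J^+(p)\cap\diam=(J^+(p)\setminus I^+(p))\cap\diam$; and push-up, namely $a\ll b\le c$ or $a\le b\ll c$ implies $a\ll c$. By causal convexity of $\diam$ it makes no difference whether the causal relations are computed in $\diam$ or in $\M$. The input from Assumption~\ref{assumption_null_cone_foliation} is that every $x\in\dg$ lies on $\mathcal H_{f(x)}$, so $x\in\partial J^+(\gamma(f(x)))$. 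Hence $x\in J^+(\gamma(f(x)))\setminus I^+(\gamma(f(x)))$, with $x\neq\gamma(f(x))$. Since $\gamma$ is future-directed timelike, $\gamma(s)\ll\gamma(s')$ for $s<s'$.

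For (a), fix $s\in I$ and $x\in\dg$, and write $\tau=f(x)$.
\begin{itemize}
\item If $\tau=s$, then $x\in J^+(\gamma(s))\setminus I^+(\gamma(s))$ by the input above.
\item If $\tau>s$, then $\gamma(s)\ll\gamma(\tau)\le x$, so push-up gives $x\in I^+(\gamma(s))\subset J^+(\gamma(s))$.
\item If $\tau<s$, suppose $x\in J^+(\gamma(s))$. Then $\gamma(\tau)\ll\gamma(s)\le x$ gives $x\in I^+(\gamma(\tau))$, contradicting $x\notin I^+(\gamma(\tau))$. So $x\notin J^+(\gamma(s))$, and a fortiori $x\notin I^+(\gamma(s))$.
\end{itemize}
Together the three cases give both identities in~\eqref{eq:levelsets}.

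For (b), let $x,y\in\dg$ with $y\in J^+(x)$. Since $x\in J^+(\gamma(f(x)))$, transitivity gives $y\in J^+(\gamma(f(x)))\cap\dg$, so $f(y)\ge f(x)$ by (a). If instead $y\in I^+(x)$, push-up gives $y\in I^+(\gamma(f(x)))$, hence $f(y)>f(x)$ by the second identity in (a). The inclusion $J^+(\Ht)\cap\dg\subset\{f\ge\tau\}$ follows by applying the first implication to each $x\in\Ht$, where $f(x)=\tau$.

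For (c), fix $x\in\dg$ and a future-directed timelike vector $T\in T_x\M$. For small $t>0$ the point $\exp_x(tT)$ lies in $I^+(x)\cap\dg$, which is open. By (b), $f(\exp_x(tT))>f(x)$, so $df(T)\ge0$, that is, $g(L,T)=-df(T)\le0$. Now $L$ is null (it is tangent to the null leaves and normal to them) and nonzero since $df\neq0$. A nonzero null vector is never $g$-orthogonal to a timelike one, so $g(L,T)<0$, which means $L$ lies in the same time cone as $T$, i.e.\ $L$ is future-directed.

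The only step needing care is the boundary characterization $\partial J^+(p)\cap\diam=(J^+(p)\setminus I^+(p))\cap\diam$, together with closedness of $J^+(p)$ in $\diam$. I would invoke these from the appendix on Lorentzian geometry for globally hyperbolic spacetimes, applied to $\diam$ itself. I would also check that the boundary of $J^+(\gamma(\tau))$ taken in $\M$ agrees inside $\diam$ with the boundary taken in $\diam$; this holds because $\diam$ is open and causally convex.
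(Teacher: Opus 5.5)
Your proof is correct and follows the paper's argument essentially step for step. The paper also uses the three-case comparison of $f(x)$ with $s$ via push-up and $\gamma(s)\ll\gamma(s')$ for (a), transitivity plus push-up for (b), and, for (c), the derivative of $f$ along a future-directed timelike curve together with the fact that a nonzero null vector is never orthogonal to a timelike one. The only difference is cosmetic: the paper uses closedness of $J^+(p)$ and openness of $I^+(p)$ directly in the globally hyperbolic $\M$, so your extra check comparing boundaries taken in $\diam$ and in $\M$ is harmless but not needed.
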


\begin{proof}
We use the following standard facts on the globally hyperbolic manifold $\M$ \cite{oneill1983}. For every $p\in\M$, the set $J^+(p)$ is closed and the set $I^+(p)$ is open. Moreover, the following holds: for $x,y,z\in\M$,
\begin{equation}\label{eq:pushup}
x\ll y\le z
\quad\text{or}\quad
x\le y\ll z
\qquad\Longrightarrow\qquad
x\ll z.
\end{equation}
Finally, since $\gamma$ is a future-directed timelike curve,
\begin{equation}\label{eq:gamma_order}
\gamma(s)\ll\gamma(s')\qquad\text{for } s<s'.
\end{equation}

(a) Let $x\in\dg$ and set $\sigma=f(x)$. By Assumption~\ref{assumption_null_cone_foliation},
\begin{equation*}
x\in\mathcal H_\sigma\subset\partial J^+(\gamma(\sigma)).
\end{equation*}
Since $J^+(\gamma(\sigma))$ is closed, its boundary is contained in it. Since $I^+(\gamma(\sigma))$ is open and contained in $J^+(\gamma(\sigma))$, it does not meet $\partial J^+(\gamma(\sigma))$. Hence
\begin{equation}\label{eq:AB}
\gamma(\sigma)\le x
\qquad\text{and}\qquad
\gamma(\sigma)\not\ll x.
\end{equation}
Now fix $s\in I$ and compare $\sigma$ with $s$.

If $\sigma>s$, then \eqref{eq:gamma_order} and \eqref{eq:AB} give
\begin{equation*}
\gamma(s)\ll\gamma(\sigma)\le x,
\end{equation*}
so $\gamma(s)\ll x$ by \eqref{eq:pushup}. In particular $\gamma(s)\le x$.

If $\sigma=s$, then $\gamma(s)\le x$ and $\gamma(s)\not\ll x$, by \eqref{eq:AB}.

If $\sigma<s$, then $\gamma(s)\not\le x$. Indeed, otherwise
\begin{equation*}
\gamma(\sigma)\ll\gamma(s)\le x,
\end{equation*}
and \eqref{eq:pushup} gives $\gamma(\sigma)\ll x$, which contradicts \eqref{eq:AB}. A fortiori, $\gamma(s)\not\ll x$.

Thus
\begin{equation*}
\gamma(s)\le x\iff f(x)\ge s,
\qquad
\gamma(s)\ll x\iff f(x)>s,
\end{equation*}
which is \eqref{eq:levelsets}.

(b) Let $x,y\in\dg$ with $y\in J^+(x)$, and set $s=f(x)$. By (a), $\gamma(s)\le x$, hence
\begin{equation*}
\gamma(s)\le x\le y.
\end{equation*}
So $\gamma(s)\le y$, and $f(y)\ge s$ by (a). If moreover $y\in I^+(x)$, then
\begin{equation*}
\gamma(s)\le x\ll y,
\end{equation*}
so $\gamma(s)\ll y$ by \eqref{eq:pushup}, and $f(y)>s$ by (a). Finally, if $y\in J^+(\Ht)\cap\dg$, then $y\in J^+(x)$ for some $x\in\Ht$, and $f(y)\ge f(x)=\tau$.

(c) Let $x\in\dg$ and let $Y\in T_x\M$ be future-directed timelike. Since $\dg$ is open, there is a future-directed timelike curve $c\colon[0,\delta)\to\dg$ with $c(0)=x$ and $\dot c(0)=Y$. For $0<t<\delta$ we have $c(t)\in I^+(x)$, so $f(c(t))>f(x)$ by (b). Hence
\begin{equation*}
df(Y)=\frac{d}{dt}f(c(t))\Big|_{t=0}\ge0.
\end{equation*}
The vector $\nabla_gf$ is normal to the null hypersurfaces $\mathcal H_\tau$, so it is null, and it is nonzero since $df\ne0$. A nonzero null vector is not orthogonal to a timelike vector, so $df(Y)=g(\nabla_gf,Y)\ne0$. Therefore
\begin{equation*}
g(L,Y)=-df(Y)<0,
\end{equation*}
and the null vector $L$ is future-directed.
\end{proof}

\subsection{The progressive wave expansion}

We prove a progressive wave expansion property analogous to~\cite[Proposition 1.2]{doi:10.1137/21M1400596}. The following identity is the key calculation.

\begin{lemma}
\label{lem:key}
Let $\alpha,v\in C^\infty(\dg)$. Then, in $\dg$,
\begin{equation}
\begin{aligned}
  P\bigl(\alpha\,\delta(\varphi)+v\,H(\varphi)\bigr)
  ={}&
  \bigl(2L(\alpha)+\theta\alpha\bigr)\delta'(\varphi)
  \\
  &+
  \bigl(P\alpha+2L(v)+\theta v\bigr)\delta(\varphi)
  +
  (Pv)H(\varphi).
\end{aligned}
\label{eq:key}
\end{equation}
\end{lemma}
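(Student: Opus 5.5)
The proof is a direct distributional computation. It rests on the product rule for $\Box_g=-\div_g\nabla^g$ and the chain rule for distributions of the form $F(\varphi)$, with $\varphi=\varphi_\tau=f-\tau$. Since $d\varphi=df\neq 0$ on $\dg$, the pullbacks $\delta(\varphi)$, $\delta'(\varphi)$, $H(\varphi)$ are well defined (Appendix~\ref{ap_distrib_manifold}). Moreover, for $F\in\mathcal D'(\R)$ we have $\nabla(F(\varphi))=F'(\varphi)\nabla\varphi$. This can be checked for smooth $F$ and extended by density, or verified in local coordinates where $\varphi$ is a coordinate function.

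The first step is to record the general identity, valid for $a\in C^\infty(\dg)$ and $F\in\mathcal D'(\R)$:
\[
\Box_g\bigl(aF(\varphi)\bigr)=(\Box_g a)F(\varphi)-2g(\nabla a,\nabla\varphi)F'(\varphi)+a(\Box_g\varphi)F'(\varphi)-a\,g(\nabla\varphi,\nabla\varphi)F''(\varphi).
\]
It follows from $\Box_g(ab)=(\Box_g a)b+a\Box_g b-2g(\nabla a,\nabla b)$ together with $\Box_g F(\varphi)=-\div(F'(\varphi)\nabla\varphi)=F'(\varphi)\Box_g\varphi-F''(\varphi)g(\nabla\varphi,\nabla\varphi)$. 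We now insert the geometry. Since the leaves are null, $g(\nabla\varphi,\nabla\varphi)=0$, so the $F''$ term drops. This matters because otherwise a $\delta''$ term would appear from $\alpha\delta(\varphi)$. Next, $\nabla\varphi=\nabla f=-L$, so $-2g(\nabla a,\nabla\varphi)=2L(a)$. Finally $\Box_g\varphi=\theta$. Hence
\[
P\bigl(aF(\varphi)\bigr)=(Pa)F(\varphi)+\bigl(2L(a)+\theta a\bigr)F'(\varphi),
\]
where $q$ enters only through the zeroth order term $Pa=\Box_g a+qa$.

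We apply this with $(a,F)=(\alpha,\delta)$, giving $(P\alpha)\delta(\varphi)+(2L\alpha+\theta\alpha)\delta'(\varphi)$. We then apply it with $(a,F)=(v,H)$, using $H'=\delta$, giving $(Pv)H(\varphi)+(2Lv+\theta v)\delta(\varphi)$. Summing and grouping by $\delta'(\varphi)$, $\delta(\varphi)$ and $H(\varphi)$ yields \eqref{eq:key}.

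The only point needing care is the distributional justification of the chain and product rules for $F(\varphi)$ with singular $F$, together with the sign convention $\Box_g=-\div\nabla$. I would handle the former by working in local coordinates $(y^0,y')$ with $y^0=\varphi$, which exist because $d\varphi\neq0$. In these coordinates $\delta^{(k)}(\varphi)=\delta^{(k)}(y^0)\otimes 1$, and the identities reduce to standard one-variable facts. Alternatively, one can approximate $F$ by smooth $F_\epsilon\to F$ in $\mathcal D'(\R)$ and use continuity of pullback by the submersion $\varphi$.
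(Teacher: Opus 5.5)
Your proposal is correct and follows the paper's proof: both establish $P\bigl(aF(\varphi)\bigr)=(Pa)F(\varphi)+\bigl(2L(a)+\theta a\bigr)F'(\varphi)$ using $g(\nabla\varphi,\nabla\varphi)=0$, $L=-\nabla\varphi$ and $\theta=\Box_g\varphi$, then apply it with $F=\delta$ and $F=H$ and add. You additionally spell out the product/chain-rule computation and its distributional justification, which the paper leaves implicit; the key point there, which you rely on correctly, is that $g(\nabla\varphi,\nabla\varphi)$ vanishes identically on $\dg$ because every leaf is null, so the $\delta''(\varphi)$ term drops.
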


\begin{proof}
Since $\Box_g=-\div_g\nabla$, $g(\nabla\varphi,\nabla\varphi)=0$, $L=-\nabla\varphi$ and $\theta=\Box_g\varphi$, for $a\in C^\infty(\dg)$ and $F\in\mathcal D'(\R)$ we have
\begin{equation}\label{eq:PaF}
P\bigl(a\,F(\varphi)\bigr)=F(\varphi)\,Pa+F'(\varphi)\,\bigl(2L(a)+\theta a\bigr).
\end{equation}
In particular
\begin{equation}
  \Box_gH(\varphi)=\theta\,\delta(\varphi),
  \qquad
  \Box_g\delta(\varphi)=\theta\,\delta'(\varphi).
  \label{eq:box-H-delta}
\end{equation}
With $F=H$ and $F=\delta$, \eqref{eq:PaF} yields
\begin{equation}
  P(v\,H(\varphi))
  =
  (Pv)\,H(\varphi)
  +
  (2L(v)+\theta v)\,\delta(\varphi),
  \label{eq:PvH}
\end{equation}
and
\begin{equation}
  P(\alpha\,\delta(\varphi))
  =
  (P\alpha)\,\delta(\varphi)
  +
  (2L(\alpha)+\theta\alpha)\,\delta'(\varphi).
  \label{eq:Palpha}
\end{equation}
Adding \eqref{eq:PvH} and \eqref{eq:Palpha} proves \eqref{eq:key}.
\end{proof}

\begin{proposition}
\label{prop_prog_wave}
Let $V$ be the unique solution of~\eqref{eq_wave_lorentz}. Then, there exist functions $\alpha,v\in C^\infty(\diam)$, with $\alpha$ vanishing near $\gamma(I)$,
such that
\begin{equation}
  V=\alpha\,\delta(\varphi)+v\,H(\varphi)
  \qquad\text{in }\diam.
  \label{eq:main}
\end{equation}
Moreover, $\alpha$ satisfies
\begin{equation}
  2L(\alpha)+\theta\alpha=\chi
  \qquad\text{in }\dg,
  \label{eq:alpha}
\end{equation}
and there is $\delta_\alpha>0$ such that
\begin{equation}
  \alpha\ge\delta_\alpha\qquad\text{on }K.
  \label{eq:alpha-lower}
\end{equation}
\end{proposition}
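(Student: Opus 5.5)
We construct $\alpha$ and $v$ directly, check that $\alpha\,\delta(\varphi)+v\,H(\varphi)$ solves~\eqref{eq_wave_lorentz}, and conclude by uniqueness from Proposition~\ref{prop_wave_lorentz}. Throughout, $\varphi=\varphi_\tau$.

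\textbf{Step 1: construction of $\alpha$.} We solve the transport equation~\eqref{eq:alpha} along the null generators of the leaves. These are the integral curves of $L$, which are null geodesics lying in $\mathcal H_{f(x)}$. Tracing one backwards from $x$, it approaches the vertex $\gamma(f(x))$. On each generator we impose zero data near the vertex, that is, $\alpha=0$ on the portion of the generator preceding $\supp\chi$. Since $\supp\chi$ is compact in $\dg$, it stays a positive distance from $\gamma(I)$, so this condition is meaningful and $\alpha$ vanishes near $\gamma(I)$. Explicitly, if $s\mapsto c(s)$ is the integral curve of $L$ with $c(0)=x$, then
\[
\alpha(x)=\tfrac12\int_{-\infty}^0 \exp\Bigl(-\tfrac12\int_s^0\theta(c(r))\,dr\Bigr)\chi(c(s))\,ds .
\]
The integral is finite because $\chi\circ c$ has compact support in $s$.

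To get smoothness of $\alpha$ on $\diam$, I would use the flow of $L$ on $\dg$ together with the fact that $\supp\chi$ is compact. Concretely, for $x$ in a compact set, the backward generator leaves $\supp\chi$ after a uniformly bounded parameter time, so the integral above is over a uniformly bounded interval. Smooth dependence of the flow on $x$ then gives $\alpha\in C^\infty(\dg)$. Because $\alpha$ vanishes near $\gamma(I)$, extending it by $0$ gives $\alpha\in C^\infty(\diam)$. I expect this uniformity, namely controlling generators near the vertex and the completeness of the flow inside $\diam$, to be the main technical obstacle. I would handle it with Lemma~\ref{lem_monotone}: $\supp\chi\subset J^+(\gamma(\tau_0))$ for some $\tau_0$, and the backward flow toward the vertex is confined to a region where $\chi=0$.

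\textbf{Step 2: the lower bound~\eqref{eq:alpha-lower}.} For $x\in K$, the integrand in the formula for $\alpha(x)$ is nonnegative. By Assumption~\ref{assumption_sources}, it is strictly positive on an open set of $s$, so $\alpha(x)>0$. Since $\alpha$ is continuous and $K$ is compact, $\alpha\ge\delta_\alpha>0$ on $K$.

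\textbf{Step 3: construction of $v$.} By Lemma~\ref{lem:key}, it suffices to have $Pv=0$ in $\{\varphi>0\}$ together with $2L(v)+\theta v=-P\alpha$ on $\mathcal H_\tau$. The second condition is a transport equation along the generators of $\mathcal H_\tau$. It has zero data near the vertex, since $\alpha$ vanishes near $\gamma(I)$, so it determines $v_0=v|_{\mathcal H_\tau}$ smoothly, exactly as in Step 1. We then need a smooth $v$ in $\overline{Q_\tau}$ with $Pv=0$ and trace $v_0$ on the characteristic hypersurface $\mathcal H_\tau$. This is a characteristic (Goursat-type) problem. Rather than solve it directly, I would proceed as follows. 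Choose a smooth $\tilde v$ whose Taylor jets on $\mathcal H_\tau$ are determined order by order by the transport equations coming from $P\tilde v=O(\varphi^\infty)$; the $k$-th normal derivative of $\tilde v$ satisfies a transport equation along $L$. Set
\[
W=V-\alpha\,\delta(\varphi)-\tilde v\,H(\varphi).
\]
Then $PW=-(P\tilde v)H(\varphi)$, which is smooth and vanishes to infinite order on $\mathcal H_\tau$. Moreover $W$ is supported in $J^+(\supp\chi\cap\mathcal H_\tau)\subset\{f\ge\tau\}$ by Lemma~\ref{lem_monotone}(b).

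\textbf{Step 4: conclusion by uniqueness.} Solve $Pw=-(P\tilde v)H(\varphi)$ with $w$ supported in the future of the source. By global hyperbolicity of $\diam$ (Remark~\ref{rem_D_globhyp}), $w$ is smooth and vanishes in $\{f<\tau\}$, hence vanishes to infinite order on $\mathcal H_\tau$. By uniqueness in Proposition~\ref{prop_wave_lorentz}, $W=w$. Setting $v=\tilde v+w$ on $\{f\ge\tau\}$, extended smoothly to $\diam$ (for instance by $\tilde v$ elsewhere), yields~\eqref{eq:main}. Finally, applying Lemma~\ref{lem:key} to~\eqref{eq:main} and comparing the $\delta'(\varphi)$ coefficients with the source recovers~\eqref{eq:alpha} on $\mathcal H_\tau$. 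For other leaves it holds by the construction in Step 1.
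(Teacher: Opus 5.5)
Your construction follows the same route as the paper: transport equations for $\alpha$ on all leaves, a Borel parametrix $\alpha\,\delta(\varphi)+\widetilde v\,H(\varphi)$, a smooth remainder, and absorbing the remainder into $v$ because it vanishes on $\{\varphi<0\}$. Computing the jets of $\widetilde v$ only on $\mathcal H_\tau$ is a valid variant. It works because $g^{-1}(d\varphi,d\varphi)\equiv0$, so in coordinates $(\varphi,y)$ one has $\Box_g=(2L+\theta)\partial_\varphi+T$ with $T$ involving only $y$-derivatives, and this gives a triangular recursion for $\partial_\varphi^k\widetilde v|_{\mathcal H_\tau}$. The paper instead solves for the $w_k$ on every leaf, which has the side benefit that they do not depend on $\tau$; this is used later for \eqref{bound_admissible}.

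The genuine gap is in Step~1, at the point you yourself flag: you never prove that $\alpha$ vanishes on a whole neighbourhood of $\gamma(I)$ in $\diam$. Much depends on this:
\begin{itemize}
\item it gives $\alpha\in C^\infty(\diam)$ and makes $\alpha\,\delta(\varphi)$ a distribution on $\diam$;
\item it supplies the local uniformity behind your smoothness argument, since for $x'$ near $x$ the generator of $x'$ must not meet $\supp\chi$ close to its vertex;
\item it makes the jets of $\widetilde v$ vanish on $\mathcal H_\tau$ near $\gamma(\tau)$, so the Borel function, which lives only near $\mathcal H_\tau$ in $\dg$, can be cut off and extended smoothly to $\diam$ (a step you leave implicit).
\end{itemize}
Neither of your justifications gives this. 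That $\supp\chi$ has positive distance from $\gamma(I)$ is beside the point: $\alpha(x)$ depends on $\chi$ along the entire generator segment from $\gamma(f(x))$ to $x$, and you give no reason why that segment stays near $\gamma(s)$ when $x$ does. The inclusion $\supp\chi\subset J^+(\gamma(\tau_0))$ only handles vertices $\gamma(s)$ with $s<\tau_0$, where the relevant leaves have $f<\tau_0$ and miss $\supp\chi$. It says nothing for $s\ge\tau_0$.

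The missing ingredient is a two-sided causal diamond around the vertex. Fix $s\in I$. By global hyperbolicity, the compact sets $J^+(\gamma(s-\varepsilon))\cap J^-(\gamma(s+\varepsilon))$ decrease to $\{\gamma(s)\}$, so for $\varepsilon$ small they miss $\supp\chi$. For $x\in\dg$ in $\mathcal U_s=I^+(\gamma(s-\varepsilon))\cap I^-(\gamma(s+\varepsilon))$, Lemma~\ref{lem_monotone}(a) gives $f(x)>s-\varepsilon$. Hence every point $y$ of the generator segment satisfies
\[
\gamma(s-\varepsilon)\ll\gamma(f(x))\le y\le x\ll\gamma(s+\varepsilon),
\]
so the whole segment stays in a region where $\chi=0$. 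Thus $\alpha=0$ on $\bigcup_s\mathcal U_s\supset\gamma(I)$. The same argument handles every later transport equation whose right-hand side vanishes near $\gamma(I)$, and it also gives the uniformity needed for smoothness.

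Two smaller points in Steps~3--4.
\begin{itemize}
\item $W$ is not supported in $J^+(\supp\chi\cap\mathcal H_\tau)$, since $\widetilde v\,H(\varphi)$ can be nonzero anywhere in $\{f\ge\tau\}$. Only $\supp W\subset J^+(\gamma(\tau))$ holds, which is all you use.
\item The source $-(P\widetilde v)H(\varphi)$ is not compactly supported in $\diam$, so Proposition~\ref{prop_wave_lorentz} does not apply verbatim. You need its past-compact version, or the paper's route: $W$ vanishes near a Cauchy surface in the past of $\supp\chi\cup\{\gamma(\tau)\}$ and $PW$ is smooth, so $W$ is smooth by well-posedness of the Cauchy problem.
\end{itemize}
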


\begin{proof}
\textbf{1. The ansatz and the transport equations.}
We look for a parametrix of the form
\begin{equation}\label{def_of_parametrix_r}
R_\infty=\alpha\,\delta(\varphi)+\widetilde v\,H(\varphi),
\qquad
\widetilde v\sim\sum_{k\ge0}w_k\,\varphi^k\quad\text{near }\Ht,
\end{equation}
with smooth functions $\alpha,w_0,w_1,\dots$, such that $PR_\infty-\chi\,\delta'(\varphi)$ is smooth. Then $V-R_\infty$ solves a wave equation with a smooth source, so it is smooth.

To find the equations for $\alpha$ and the $w_k$, let
\begin{equation*}
v_N=\sum_{k=0}^Nw_k\varphi^k .
\end{equation*}
Since $L\varphi=0$, \eqref{eq:PaF} gives
\begin{equation*}
P(w_k\varphi^k)=\varphi^kPw_k+k\varphi^{k-1}(2Lw_k+\theta w_k),
\qquad
(2L+\theta)v_N=\sum_{k=0}^N\varphi^k(2L+\theta)w_k .
\end{equation*}
As $\varphi^k\delta(\varphi)=0$ for $k\ge1$, Lemma~\ref{lem:key} gives
\begin{equation}\label{eq:ansatz}
\begin{aligned}
&P\bigl(\alpha\,\delta(\varphi)+v_N\,H(\varphi)\bigr)-\chi\,\delta'(\varphi)\\
&\quad=(2L\alpha+\theta\alpha-\chi)\,\delta'(\varphi)+(2Lw_0+\theta w_0+P\alpha)\,\delta(\varphi)\\
&\qquad+\sum_{k=1}^N\varphi^{k-1}\bigl(k(2Lw_k+\theta w_k)+Pw_{k-1}\bigr)H(\varphi)+\varphi^N(Pw_N)\,H(\varphi).
\end{aligned}
\end{equation}
So we impose the transport equations
\begin{equation}\label{eq:recursion}
\left\{
\begin{aligned}
&2L\alpha+\theta\alpha=\chi,\\
&2Lw_0+\theta w_0=-P\alpha,\\
&k\,(2Lw_k+\theta w_k)=-Pw_{k-1},\qquad k\ge1.
\end{aligned}
\right.
\end{equation}
Then only the last term of \eqref{eq:ansatz} remains. In particular,
\begin{equation}\label{eq:Pwk}
Pv_N=\varphi^N\,Pw_N .
\end{equation}
This error is only in $C^{N-1}$ and will be removed by letting $N\to\infty$ via Borel's theorem.

\medskip\noindent
\textbf{2. Solving the transport equations on all leaves.}
Each equation in \eqref{eq:recursion} has the form
\begin{equation*}
2La+\theta a=b,
\end{equation*}
with $b$ given by the previous equations. Since $L$ is tangent to the leaves, this is an ODE along each null generator. The generator through $x\in\dg$ is the null geodesic from the vertex $\gamma(f(x))$ to $x$. We solve it on every generator of every leaf $\mathcal H_s$, $s\in I$ and the solutions are then defined on all of $\dg$. This is important since $Pa$ involves derivatives of $a$ in directions transversal to $\mathcal H_\tau$.

Let $\lambda\colon(s_0,s_1)\to\dg$ be a generator, with $\dot\lambda=L\circ\lambda$ and $\lambda(s)$ tending to its vertex as $s\to s_0$. For $b$ vanishing near $\gamma(I)$, the unique solution along $\lambda$ that vanishes near the vertex is
\begin{equation}
  a(\lambda(s))
  =
  \frac{1}{2}
  \int_{s_0}^{s}
  b(\lambda(\sigma))
  \exp\!\left(
  -\frac{1}{2}\int_\sigma^s\theta(\lambda(r))\,dr
  \right)
  d\sigma .
  \label{eq:alpha-explicit}
\end{equation}
It is smooth on $\dg$, by smooth dependence of the flow of $L$ on the initial point.

It remains to check that the right-hand sides vanish near \textit{a whole neighbourhood} of $\gamma(I)$ in $\diam$. By \eqref{eq:alpha-explicit}, $a(x)$ depends only on $b$ along the generator segment from $\gamma(f(x))$ to $x$. By Lemma~\ref{lem_monotone}, the points $y$ of this segment satisfy
\begin{equation}\label{eq:segment}
\gamma(f(x))\le y\le x .
\end{equation}
Let $s\in I$. Since $\M$ is globally hyperbolic, there is $\varepsilon>0$ such that
\begin{equation*}
J^+(\gamma(s-\varepsilon))\cap J^-(\gamma(s+\varepsilon))\cap\supp\chi=\emptyset .
\end{equation*}
Let
\begin{equation*}
\mathcal U_s=I^+(\gamma(s-\varepsilon))\cap I^-(\gamma(s+\varepsilon)),
\end{equation*}
an open neighbourhood of $\gamma(s)$ on which $\chi=0$. If $x\in\mathcal U_s\cap\dg$, then $f(x)>s-\varepsilon$ by Lemma~\ref{lem_monotone}(a). So, by \eqref{eq:segment}, every point $y$ of the generator segment to $x$ satisfies
\begin{equation*}
\gamma(s-\varepsilon)\ll\gamma(f(x))\le y\le x\ll\gamma(s+\varepsilon),
\end{equation*}
and hence $y\in\mathcal U_s$. Now $\alpha$ is determined by~\eqref{eq:alpha-explicit} for $b=\chi$ and it vanishes in $\cup_{s \in I} \mathcal U _s$ which is a neighbourhood of $\gamma(I)$. Moreover, the lower bound~\eqref{eq:alpha-lower} is an immediate consequence of the same formula
\begin{equation*}
\alpha(x)=\frac12\int_{s_0}^{0}\chi(\lambda(\sigma))\exp\Bigl(-\frac12\int_\sigma^0\theta(\lambda(r))\,dr\Bigr)d\sigma,
\end{equation*}
combined with the assumption~\ref{assumption_sources} on $\chi$. Indeed, since each generator meets the set $\{\chi >0\}$ we have that $\alpha(x)>0$ for all $x \in K$ and hence the lower bound by compactness.

Once $\alpha$ is constructed the subsequent $w_k$ are determined by solving the transport equations~\eqref{eq:recursion}. They vanish near $\gamma(I)$, and we extend them by zero to $\diam$. In the same way, they vanish in the open set $\dg\setminus J^+(\supp\chi)$: by \eqref{eq:segment}, the generator segment to a point $x$ of this set lies in $J^-(x)$, hence in this set. Thus
\begin{equation*}
\supp\alpha\cup\bigcup_{k\ge0}\supp w_k\subset J^+(\supp\chi).
\end{equation*}

\medskip\noindent
\textbf{3. The parametrix.}
Since $d\varphi=df\neq0$, $\varphi$ can be used as a local coordinate near $\Ht$. By Borel's theorem~\cite[Theorem 1.2.6]{hormander1990}, applied in the variable $\varphi$, there is a function $\widetilde v_0$, smooth in a neighbourhood $\mathcal O$ of $\Ht$ in $\dg$, with prescribed Taylor coefficients in $\varphi$:
\begin{equation}
  \widetilde v_0-\sum_{k<N}w_k\varphi^k\in\varphi^N\,C^\infty(\mathcal O)\qquad\text{for every }N .
  \label{eq:borel}
\end{equation}
The function $\widetilde v_0$ is not yet what we need since it is defined only in $\mathcal O$, and only its Taylor coefficients on $\Ht$ are prescribed. We use a cut-off to extend to the whole $\diam$.

Let $\mathcal N\subset\diam$ be an open neighbourhood of $\gamma(I)$ on which all $w_k$ vanish. We pick $\vartheta\in C^\infty(\diam)$ with
\begin{equation*}
\vartheta=1\quad\text{near }\Ht\setminus\mathcal N,
\qquad
\supp\vartheta\subset\mathcal O .
\end{equation*}
Since $\supp\vartheta$ is closed in $\diam$ and does not meet $\gamma(I)$, $\vartheta$ vanishes near $\gamma(I)$. Let $\widetilde v=\vartheta\,\widetilde v_0$, extended by zero to $\diam$. Then
\begin{equation*}
\widetilde v\in C^\infty(\diam),
\qquad
\widetilde v=0\quad\text{near }\gamma(I).
\end{equation*}
The cut-off does not change the Taylor expansion on $\Ht$, so $\widetilde v$ still satisfies \eqref{eq:borel} near $\Ht$. By \eqref{eq:borel} and \eqref{eq:Pwk},
\begin{equation*}
P\widetilde v\in\varphi^NC^\infty\quad\text{near }\Ht,\ \text{for every }N,
\end{equation*}
that is, $P\widetilde v$ vanishes to infinite order on $\Ht=\{\varphi=0\}$. Now recalling that $R_\infty=\alpha \delta(\varphi)+\widetilde vH(\varphi)$ Lemma~\ref{lem:key} gives
\begin{equation}
  F_\infty
  =
  PR_\infty-\chi\,\delta'(\varphi)
  =
  (P\widetilde{v})\,H(\varphi)
  \in C^\infty(\diam).
  \label{eq:Finfty}
\end{equation}
The term $F_\infty$ is smooth because $P\widetilde v$ vanishes to infinite order on $\{\varphi=0\}$, and $F_\infty=0$ near $\gamma(I)$, since $\widetilde v=0$ there.

\medskip\noindent
\textbf{4. The smooth remainder.}
Set
\begin{equation}
  W=V-R_\infty,
  \qquad\text{so that}\qquad
  PW=-F_\infty\in C^\infty(\diam)
  \label{eq:W}
\end{equation}
by \eqref{eq_wave_lorentz} and \eqref{eq:Finfty}. Since $\diam$ is globally hyperbolic, it has a Cauchy temporal function $t_{\diam}$. Let
\begin{equation*}
C=\supp\chi\cup\{\gamma(\tau)\},
\end{equation*}
a compact set, choose $t_0<\min_Ct_{\diam}$, and let
\begin{equation*}
\Sigma=\{t_{\diam}=t_0\},
\end{equation*}
an initial spacelike Cauchy hypersurface in the past of $C$. Since $t_{\diam}$ increases along future-directed causal curves,
\begin{equation*}
J^+(C)\subset\{t_{\diam}\ge\min_Ct_{\diam}\},
\end{equation*}
so $J^+(C)$ does not meet the neighbourhood $\{t_{\diam}<\min_Ct_{\diam}\}$ of $\Sigma$. Now:
\begin{equation*}
\begin{aligned}
&\supp V\subset J^+(\supp(\chi\,\delta'(\varphi)))\subset J^+(C)&&\text{by \eqref{eq_wave_lorentz}},\\
&\supp\alpha\subset J^+(\supp\chi)\subset J^+(C)&&\text{by Step~2},\\
&\supp\bigl(\widetilde v\,H(\varphi)\bigr)\subset\overline{\{f\ge\tau\}}\subset J^+(\gamma(\tau))\subset J^+(C)&&\text{by Lemma~\ref{lem_monotone}(a)}.
\end{aligned}
\end{equation*}
Hence
\begin{equation*}
W=V-\alpha\,\delta(\varphi)-\widetilde v\,H(\varphi)=0\quad\text{near }\Sigma,
\end{equation*}
so $W$ solves $PW=-F_\infty$ with zero Cauchy data on $\Sigma$ and a smooth source. By well-posedness of the Cauchy problem we get that it is the unique smooth solution of this wave equation and in particular
\begin{equation}
  W\in C^\infty(\diam).
  \label{eq:W-smooth}
\end{equation}

\medskip\noindent
\textbf{5. Conclusion.}
Since $\supp V\subset J^+(\supp\chi\cap\Ht)$, Lemma~\ref{lem_monotone}(b) gives
\begin{equation*}
\supp V\cap\dg\subset\{\varphi\ge0\}.
\end{equation*}
By construction, $\supp R_\infty\subset\{\varphi\ge0\}$. So the smooth function $W=V-R_\infty$ vanishes on $\{\varphi<0\}$, that is,
\begin{equation*}
W=W\,H(\varphi).
\end{equation*}
Going back to $V=R_\infty+W$, we obtain
\begin{equation*}
V=\alpha\,\delta(\varphi)+v\,H(\varphi),\qquad v=\widetilde v+W\in C^\infty(\diam),
\end{equation*}
which finishes the proof of~\eqref{eq:main}.
\end{proof}

\begin{remark}[The whole diamond is foliated]\label{rem_whole_diamond}
It is important that the null cones foliate the whole of $\dg$, and not only a neighbourhood of $\overline\uu$. This is so even if $\supp\chi\subset\uu$. Indeed, by finite speed of propagation, $V_\tau$ in $\uu$ depends on the geometry of the region
\begin{equation*}
J^+(\supp\chi)\cap J^-(\overline\uu),
\end{equation*}
which in general is not contained in $\overline\uu$: causal curves, and in particular the null generators of $\mathcal H_\tau$, may leave $\overline\uu$ and come back. Without the foliation outside $\overline\uu$, the cone $\mathcal H_\tau$ could develop caustics there, and singularities of $V_\tau$ could re-enter $\uu$ away from $\mathcal H_\tau$. The progressive wave representation of Proposition~\ref{prop_prog_wave} would then fail in $\uu$.
\end{remark}

\begin{figure}[ht]
\centering
\begin{tikzpicture}[scale=1.4,
  gen/.style={red!75!black, thin},
  genback/.style={red!35, thin},
  flow/.style={postaction={decorate},
    decoration={markings, mark=at position 0.62 with {\arrow{Stealth[length=5pt]}}}}]

\def\Ra{4.2}\def\Hh{4.2}\def\Rb{1.0}
\pgfmathsetmacro\ps{asin(-\Rb/\Hh)}
\pgfmathsetmacro\sx{\Ra*cos(\ps)}
\pgfmathsetmacro\sy{\Hh+\Rb*sin(\ps)}
\def\vx{0.1936}\def\vy{2.2}
\pgfmathsetmacro\aa{\Hh-\vy}
\pgfmathsetmacro\bb{\Rb*\aa/\Ra}
\pgfmathsetmacro\pps{asin(-\bb/\aa)}
\def\fc{0.45}\def\fk{0.80}

\fill[red!6] (0,0) -- (\sx,\sy)
  arc[start angle=\ps, end angle=180-\ps, x radius=\Ra, y radius=\Rb] -- cycle;
\fill[red!2] (0,\Hh) ellipse[x radius=\Ra, y radius=\Rb];

\foreach \t in {30,70,110,150}
  \draw[genback] (0,0) -- ({\Ra*cos(\t)},{\Hh+\Rb*sin(\t)});

\fill[red!14, opacity=0.6] (\vx,\vy) -- ({\vx+\aa*cos(\pps)},{\Hh+\bb*sin(\pps)})
  arc[start angle=\pps, end angle=180-\pps, x radius=\aa, y radius=\bb] -- cycle;
\draw[red!55, thick] ({\vx+\aa*cos(\pps)},{\Hh+\bb*sin(\pps)}) -- (\vx,\vy)
  -- ({\vx-\aa*cos(\pps)},{\Hh+\bb*sin(\pps)});
\draw[red!55, thick] (\vx,\Hh) ellipse[x radius=\aa, y radius=\bb];

\draw[blue!70!black, very thick, domain=-1.6:5.5, samples=60]
  plot ({0.04*\x*\x},{\x});
\node[blue!70!black, right] at ({0.04*5.5*5.5},5.5) {$\gamma$};

\coordinate (Kc) at ($(0,0)!\fk!({\Ra*cos(297)},{\Hh+\Rb*sin(297)})+(-0.03,0.01)$);
\fill[black!18, opacity=0.85, rotate around={8:(Kc)}] (Kc) ellipse[x radius=0.95, y radius=0.48];
\draw[black!70, thick, rotate around={8:(Kc)}] (Kc) ellipse[x radius=0.95, y radius=0.48];

\coordinate (Cc) at ($(0,0)!\fc!({\Ra*cos(297)},{\Hh+\Rb*sin(297)})+(-0.03,0)$);
\fill[orange!45, draw=orange!80!black, rotate around={9:(Cc)}] (Cc) ellipse[x radius=0.56, y radius=0.22];

\foreach \t in {282,297,312}{
  \coordinate (R) at ({\Ra*cos(\t)},{\Hh+\Rb*sin(\t)});
  \draw[gen] (0,0) -- ($(0,0)!\fc!(R)$);
  \draw[orange!85!black, line width=1.4pt, flow] ($(0,0)!\fc!(R)$) -- (R);
  \fill ($(0,0)!\fk!(R)$) circle (1.5pt);
}
\draw[gen, flow] (0,0) -- ({\Ra*cos(225)},{\Hh+\Rb*sin(225)});

\coordinate (P) at ({\Ra*cos(297)},{\Hh+\Rb*sin(297)});
\coordinate (X) at ($(0,0)!\fk!(P)$);
\draw[-{Stealth[length=7pt]}, very thick, black] (X) -- ($(X)!0.9cm!(P)$)
  node[above left, xshift=2pt] {$L$};
\node[below, yshift=-1pt] at (X) {$x$};

\draw[red!75!black, very thick] (\sx,\sy) -- (0,0) -- (-\sx,\sy);
\draw[red!75!black, very thick] (0,\Hh) ellipse[x radius=\Ra, y radius=\Rb];

\fill[blue!70!black] (0,0) circle (2.2pt);
\node[blue!70!black, below left] at (0,0) {$\gamma(\tau)=\gamma(f(x))$};
\fill[blue!70!black] (\vx,\vy) circle (2pt);
\node[blue!70!black, left, xshift=-2pt] at (\vx,\vy) {$\gamma(\tau')$};

\node[black!80] at ($(Kc)+(-0.42,0.12)$) {$K$};
\node[orange!80!black, right] at ($(Cc)+(0.62,-0.34)$) {$\{\chi>0\}$};
\node[orange!80!black, right] (alab) at (2.05,1.62) {$\alpha>0$};
\draw[orange!80!black, -{Stealth[length=4pt]}] (alab.west) -- (1.66,1.98);
\node[red!75!black, left] at (-1.9,1.75) {$\alpha=0$};
\node[red!75!black, right] at (3.3,2.75) {$\mathcal H_\tau=\{f=\tau\}$};
\node[red!60, left] at (-0.7,3.0) {$\mathcal H_{\tau'}$};
\node (Qlab) at (-4.4,5.6) {$Q_\tau=\{f>\tau\}$};
\draw[-{Stealth[length=5pt]}, gray] (Qlab.south) to[out=-80, in=160] (-3.0,4.3);
\node at (3.6,0.55) {$\{f<\tau\}$};

\draw[-{Stealth[length=6pt]}, gray] (-4.6,0.2) -- (-4.6,1.6) node[above] {\small future};
\end{tikzpicture}
\caption{The leaf $\mathcal H_\tau=\{f=\tau\}$ is the future null cone of $\gamma(\tau)$, ruled by its null generators, the integral curves of $L=-\nabla_gf$ (arrows). The transport equations are solved across those null generators.}
\label{fig:generators}
\end{figure}

Combining Proposition~\ref{prop_prog_wave} with Lemma~\ref{lem:key}, we obtain that the unique solution of~\eqref{eq_wave_lorentz} can be decomposed as
\begin{equation}
    V=\alpha \delta(\varphi) + v H(\varphi),
\end{equation}
with $\alpha \in C^\infty(\diam)$ and $v \in C^\infty(\diam)$ satisfying
\begin{equation}
\label{system_a_v}
\left\{
\begin{aligned}
&(\Box_g + q)v = 0
&& \text{in } J^+(\mathcal{H}_\tau)\cap\dg, \\[0.6em]
&2L(\alpha) + \theta \alpha = \chi
&& \text{near } \mathcal{H}_\tau, \\[0.6em]
&\alpha=0 && \text{near } \gamma(I), \\[0.6em]
&2L(v) + \theta v
= -(\Box_g + q)\alpha
&& \text{on } \mathcal{H}_\tau.
\end{aligned}
\right.
\end{equation}
Indeed, $PV=\chi\delta'(\varphi)$, so by \eqref{eq:key} and \eqref{eq:alpha} we have $(P\alpha+2L(v)+\theta v)\delta(\varphi)+(Pv)H(\varphi)=0$. This gives $Pv=0$ in $Q_\tau=\{f>\tau\}$, hence, by continuity, on $\{f\ge\tau\}$, which contains $J^+(\Ht)\cap\dg$ by Lemma~\ref{lem_monotone}(b). It also gives the fourth line.

\begin{remark}
    Notice that the fact that $\alpha$ vanishes near $\gamma(I)$ makes $\alpha\,\delta(\varphi)$ a well defined distribution on the whole diamond $\diam$.
\end{remark}

\section{The Carleman estimate}\label{sec_carleman}

\subsection{Setup}\label{sec_setup}

We prove in this section the Carleman estimate that will allow to propagate the information from a leaf $\mathcal H_\tau$ of the foliation to the boundary of $M$, which is the data region. Let $\tau\in\mathcal R$. The estimate is proved on the compact set
\[
X_\tau:=\overline{Q_\tau}\cap\overline\uu,\qquad \partial X_\tau=(\mathcal H_\tau\cap\overline\uu)\cup\Gamma_\tau,\qquad \Gamma_\tau=\overline{Q_\tau}\cap\partial\uu .
\]
Since $\tau$ is a regular value of $f|_{\partial\uu}$, $\mathcal H_\tau$ intersects $\partial \uu$ transversally. Thus, $X_\tau$ is a compact manifold with corners, the corner being $\mathcal H_\tau\cap\partial\uu$, and the standard integration by parts formulae hold in $X_\tau$~\cite{lee2002}.

 Let $d\sigma_h$ and $d\sigma_h^\Gamma$ be the measures induced by the Riemannian metric $h$ on $\mathcal H_\tau$ and on $\Gamma_\tau$. For $u\in C^1(X_\tau)$ we write $\partial_\nu u=du(\nu)$ and $\nabla_hu=(\partial_\nu u)\nu+\nabla_h^{\tan}u$ on $\Gamma_\tau$, where $\nabla_h^{\tan}u$ is determined by $u|_{\Gamma_\tau}$. We write $\|\cdot\|_{Q_\tau}$ for the $L^2$ norm over $Q_\tau\cap\uu$ with respect to $dV_g=dV_h$, $|\cdot|_{\mathcal H_\tau}$ for the $L^2$ norm over $\mathcal H_\tau\cap\overline\uu$ with respect to $d\sigma_h$, and $|\cdot|_{\Gamma_\tau}$ for the $L^2$ norm over $\Gamma_\tau$ with respect to $d\sigma_h^\Gamma$.

 \medskip

 Recall that $P=\Box_g+q$. We have the following.

\begin{theorem}\label{thm_carleman}
Let $\psi\in C^\infty(\diam)$ satisfy $g( \nabla \psi,\nabla \psi)<0$ and such that $\psi$ increases along future-directed timelike curves. Let $\tilde\psi = e^{\zeta\psi}$. Then there is $\zeta_0>0$ such that the following holds for every $\zeta\ge\zeta_0$. There are $C,\sigma_0>0$ such that for every $u\in C^\infty(X_\tau)$ one has,
\begin{multline*}
\sigma^3|e^{\sigma\tilde\psi}u|^2_{\mathcal H_\tau} + \sigma|e^{\sigma\tilde\psi}Lu|^2_{\mathcal H_\tau} + \sigma^3\|e^{\sigma\tilde\psi}u\|^2_{Q_\tau} + \sigma\|e^{\sigma\tilde\psi}\nabla_hu\|^2_{Q_\tau}\\
\le\; C\,\|e^{\sigma\tilde\psi}Pu\|^2_{Q_\tau} \;+\; C\Bigl(\sigma^3|e^{\sigma\tilde\psi}u|^2_{\Gamma_\tau} + \sigma|e^{\sigma\tilde\psi}\partial_\nu u|^2_{\Gamma_\tau} + \sigma|e^{\sigma\tilde\psi}\nabla_h^{\tan}u|^2_{\Gamma_\tau}\Bigr),
\end{multline*}
for all $\sigma \geq \sigma_0$.
\end{theorem}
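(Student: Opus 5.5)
We follow the standard energy (multiplier) method for Carleman estimates, keeping track of the boundary terms on the two pieces of $\partial X_\tau$. Set $\phi=\sigma\tilde\psi=\sigma e^{\zeta\psi}$ and $w=e^{\phi}u$. Then $P_\phi w:=e^{\phi}P(e^{-\phi}w)$ splits into a symmetric part and an antisymmetric part:
\[
P_\phi w = \underbrace{\Box_g w - g(\nabla\phi,\nabla\phi)\,w + qw}_{S w} + \underbrace{2g(\nabla\phi,\nabla w)+(\Box_g\phi)\,w}_{A w}.
\]
We expand $\|P_\phi w\|^2_{Q_\tau}\ge 2\langle Sw,Aw\rangle_{Q_\tau}$ and integrate by parts in $X_\tau$; this is legitimate since $X_\tau$ is a manifold with corners for $\tau\in\mathcal R$. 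The outcome is an interior term and a boundary flux.

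The interior term is governed by the pseudoconvexity computation for $\tilde\psi$. In it we find
\[
-4\sigma^3\zeta^4 e^{3\zeta\psi}\,g(\nabla\psi,\nabla\psi)^2\,|w|^2
\]
plus lower-order terms in $\zeta$. We also get a gradient term involving
\[
\sigma\zeta^2 e^{\zeta\psi}\Bigl(g(\nabla\psi,\nabla w)^2\ \text{ and } -g(\nabla\psi,\nabla\psi)\,g(\nabla w,\nabla w)\Bigr)
\]
together with $\sigma\zeta\,\mathrm{Hess}\,\psi(\nabla w,\nabla w)$. Since $\nabla\psi$ is timelike, the quadratic form $2g(\nabla\psi,\xi)^2-g(\nabla\psi,\nabla\psi)g(\xi,\xi)$ is positive definite; it is the energy–momentum tensor contracted with a timelike vector. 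Compactness of $X_\tau$ then gives a uniform lower bound of this form by $c|\xi|_h^2$. Choosing $\zeta\ge\zeta_0$ absorbs the Hessian term, and choosing $\sigma\ge\sigma_0$ absorbs $q$ and the remaining lower-order terms. As usual, we add a small multiple of $\langle Sw,\mu w\rangle$ with a suitable weight $\mu$ to balance the $|w|^2$ and $|\nabla w|^2$ contributions. This yields the interior terms $\sigma^3\|w\|^2+\sigma\|\nabla_h w\|^2$ of the statement. Returning to $u$ costs only $\sigma^2|\nabla\phi|^2|u|^2$, which is absorbed.

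Next come the boundary terms. On $\Gamma_\tau$ we do not look for signs: every flux term is a quadratic form in $(w,\nabla_h w)$ with coefficients $O(\sigma^3)$ for $|w|^2$ and $O(\sigma)$ for $|\nabla w|^2$. We bound them crudely by $C(\sigma^3|w|^2+\sigma|\partial_\nu w|^2+\sigma|\nabla^{\tan}_h w|^2)$ and convert back to $u$, which gives the right-hand side of the statement.

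The main obstacle is the flux through the null leaf $\mathcal H_\tau$, which must produce the positive terms $\sigma^3|w|^2+\sigma|Lw|^2$ on the left. There the conormal is $df$, so the normal direction is $L=-\nabla f$, which is tangent to $\mathcal H_\tau$. The energy flux $T(\nabla\phi,L)$ therefore involves only $Lw$ and the derivatives tangent to the leaf, and never the transversal derivative. We compute the flux explicitly. The leading part is
\[
\sigma\zeta e^{\zeta\psi}\bigl(2\,L\psi\,(Lw)^2+\ldots\bigr)
\]
plus $\sigma^3\zeta^3 e^{3\zeta\psi}\,(-g(\nabla\psi,\nabla\psi))\,L\psi\,|w|^2$. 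We need $L\psi$ to have a sign. Here the orientation matters: $Q_\tau$ lies to the future of $\mathcal H_\tau$, $L$ is future-directed by Lemma~\ref{lem_monotone}(c), and $\psi$ increases along future timelike curves, so $L\psi>0$ by continuity and $L\psi\ge c>0$ on the compact set. With the orientation of the outward normal, these terms come with the favourable sign. The mixed terms in the flux, involving $\nabla^{\mathcal H}w$ tangent to the leaf but transversal to $L$, should either be shown to be nonnegative, since the energy–momentum flux of a future timelike field through a null surface is $\ge0$, or be integrated by parts along the $(n-1)$-dimensional sections of the leaf. Any such integration creates corner terms on $\mathcal H_\tau\cap\partial\uu$, which we bound by $\Gamma_\tau$ data through a trace argument; this is the most delicate bookkeeping step. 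Once the leaf flux is shown to be $\ge c(\sigma^3|w|^2+\sigma|Lw|^2)$ minus absorbable terms, we combine all the pieces and undo the conjugation to obtain the estimate.
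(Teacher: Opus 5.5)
Your architecture is the paper's: conjugation by $e^{\sigma\tilde\psi}$, a symmetric/antisymmetric splitting with a zeroth-order correction (your $\mu$ plays the role of $\lambda$ in Lemma~\ref{lem_carleman_eq}), the convexified weight for the bulk, crude bounds on $\Gamma_\tau$, and the $\sigma^3w^2$ leaf term with coefficient $-G(\nabla\psi)\,L\psi>0$. However, there is a genuine gap at the step you yourself flag as the main obstacle. You leave it open, and what you propose for it does not work.

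Your leading leaf term $2L\psi\,(Lw)^2$ is not correct. Take $\underline L$ future null with $g(L,\underline L)=-2$, and $e_1,\dots,e_{n-1}$ a $g$-orthonormal frame of $\{L,\underline L\}^{\perp_g}$. The principal flux is then $\sigma\zeta\tilde\psi\,\Phi(\nabla w)$ with
\[
\begin{aligned}
\Phi(\nabla w)&=2g(\nabla w,\nabla\psi)\,g(\nabla w,\nabla f)-G(\nabla w)\,g(\nabla\psi,\nabla f)\\
&=(\underline L\psi)(Lw)^2+(L\psi)\sum_A(e_Aw)^2-2(Lw)\sum_A(e_A\psi)(e_Aw).
\end{aligned}
\]
So the coefficient of $(Lw)^2$ is $\underline L\psi$, not $L\psi$, and there is a cross term. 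You cannot split off a multiple of $(Lw)^2$ and declare the rest nonnegative by the dominant energy condition. Applied to the whole flux, that condition only gives $\Phi\ge0$. This produces no $\sigma|Lw|^2_{\mathcal H_\tau}$ term. It also cannot absorb the $\sigma|w||Lw|$ leaf flux coming from the zeroth-order parts of $A$ and of your $\mu$-correction.

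The missing idea is coercivity modulo the kernel:
\begin{itemize}
\item $\Phi$ vanishes only on $\R L$ (Lemma~\ref{lem_algebra}, which extends H\"ormander's lemma to null $Y_2$).
\item Hence $\Phi$ is uniformly positive definite on a complement $W$ of $\R L$ (Corollary~\ref{cor_algebra}).
\item Since $G(L)=0$, writing $\nabla w=sL+Z'$ with $Z'\in W$ gives $Lw=g(Z',L)$, so $\Phi(\nabla w)\ge c|Z'|_h^2\ge c'(Lw)^2$.
\end{itemize}
In the null frame above, this says the form is positive definite in $(Lw,e_1w,\dots,e_{n-1}w)$. That holds because $\underline L\psi>0$ and $(\underline L\psi)(L\psi)-\sum_A(e_A\psi)^2=-G(\nabla\psi)>0$. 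It is the strict timelikeness of $\nabla\psi$, not $L\psi>0$, that absorbs the cross term.

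You should drop the fallback of integrating the mixed terms by parts along sections of the leaf. The resulting corner terms on $\mathcal H_\tau\cap\partial M$ would involve traces of first derivatives such as $Lw$ on a codimension-one submanifold of $\Gamma_\tau$, and $L$ is in general transversal to $\partial M$. The $L^2(\Gamma_\tau)$ norms of $\partial_\nu w$ and $\nabla_h^{\tan}w$ on the right-hand side do not control such traces. The fallback is also unnecessary, since the flux is pointwise coercive.

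Two minor sign slips:
\begin{itemize}
\item For $A$ to be antisymmetric, its zeroth-order term must be $(\Delta_g\phi)w=-(\Box_g\phi)w$.
\item The leading bulk term of $2\langle Sw,Aw\rangle$ is $+4\sigma^3\zeta^4e^{3\zeta\psi}G(\nabla\psi)^2w^2$, coming from $4D^2\phi(\nabla\phi,\nabla\phi)$, not $-4\sigma^3\zeta^4e^{3\zeta\psi}G(\nabla\psi)^2w^2$.
\end{itemize}
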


\begin{remark}
    The constants $C,\sigma_0, \zeta_0$ appearing in Theorem~\ref{thm_carleman} depend on $M,g,h,\psi, \norm{q}_{L^\infty}$ but are independent of the foliation parameter $\tau$.
\end{remark}

\begin{remark}\label{rem_powers}
The Carleman estimate of Theorem~\ref{thm_carleman} is closely related to~\cite[Proposition~2.5]{oksanen2026semiglobaluniquenesslorentziancalderon}. There, the terms on the null hypersurface come with the same powers $\sigma^3$ and $\sigma$, but the bulk terms are stronger: $\sigma^3$ and $\sigma$ are replaced by $\sigma^4$ and $\sigma^2$. Moreover, the weight is $e^{\sigma\psi}$ itself, without convexification. This reflects the fact that the level sets of $\psi$ are spacelike, since $d\psi$ is timelike. So information is propagated across spacelike hypersurfaces, a direction in which the Cauchy problem for the wave equation is well posed. As a consequence, the estimate is semiclassically elliptic, and not only subelliptic contrasting to a standard Carleman estimate. See as well \cite[Remark~C.9]{oksanen2026semiglobaluniquenesslorentziancalderon}.

Notice that the estimate of~\cite[Proposition~2.5]{oksanen2026semiglobaluniquenesslorentziancalderon} does not apply in our setting. There, the lateral boundary is timelike and the Cauchy data vanish on it, while here $\partial\uu$ has no causal type and its Cauchy data appear on the right-hand side. We expect however that Theorem~\ref{thm_carleman} also holds without the loss of a half derivative, that is, with $\sigma^4$ and $\sigma^2$ in the bulk terms. We do not pursue this, since Theorem~\ref{thm_carleman} suffices for the proof of Theorem~\ref{thm_main}, and its proof is relatively short and self-contained.
\end{remark}

\subsection{Preliminary lemmas}

In this subsection and the next one, $\tau$ is fixed and we drop it from the notation for the sake of readability.

The next lemma is a general pointwise identity which holds on any semi-Riemannian manifold. We refer to~\cite[Lemma 2.1]{FO25explicit} for a proof. We write $G(X)=g(X,X)$, $\Delta_g=\div_g\nabla^g=-\Box_g$, and $D^2$ stands for the Hessian.

\begin{lemma}\label{lem_carleman_eq}
Let $U\subset\M$ be open and $\ell, \lambda,v$ be smooth real valued functions on $U$. Then, in $U$,
\begin{align*}
\tfrac12\bigl|e^\ell\Box_g(e^{-\ell}v)\bigr|^2
&= Q_+(\nabla v) + Q_-(\nabla\ell)v^2 + \div(B) + R \\
&\quad + \tfrac12\bigl|(\Delta_g+q_\ell+\lambda)v\bigr|^2 + \tfrac12\bigl|(S_\ell+\lambda)v\bigr|^2,
\end{align*}
where, writing $a=\lambda-\Delta_g\ell$,
\[
Q_\pm(X)=2D^2\ell(X,X)\pm a\,G(X), \qquad q_\ell = G(\nabla\ell)-\Delta_g\ell, \qquad S_\ell = 2\nabla\ell,
\]
\[
B = -\bigl((S_\ell+\lambda)v\bigr)\nabla v + \bigl(G(\nabla v)-(q_\ell+\lambda)v^2\bigr)\nabla\ell + \tfrac12 v^2\nabla\lambda,
\]
\[
R = \bigl(\div(a\nabla\ell) - a\lambda - \tfrac12\Delta_g\lambda\bigr)v^2.
\]
\end{lemma}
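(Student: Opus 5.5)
The identity is purely algebraic and pointwise, so I would prove it by direct expansion, organised around the splitting of the conjugated operator into two pieces. First I compute the conjugation. Using $\Delta_g(e^{-\ell}v)=e^{-\ell}\bigl(\Delta_g v-2g(\nabla\ell,\nabla v)+(G(\nabla\ell)-\Delta_g\ell)v\bigr)$ and $\Box_g=-\Delta_g$, I get
\[
e^{\ell}\Box_g(e^{-\ell}v)=-\bigl(\Delta_g v+q_\ell v-S_\ell v\bigr)=-\bigl(\mathcal A v-\mathcal B v\bigr),
\]
where $S_\ell v=2\nabla\ell(v)$, $\mathcal A=\Delta_g+q_\ell+\lambda$ and $\mathcal B=S_\ell+\lambda$. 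The auxiliary function $\lambda$ is added to one piece and subtracted from the other, so it does not change the operator. Then
\[
\tfrac12|e^\ell\Box_g(e^{-\ell}v)|^2=\tfrac12|\mathcal Av|^2+\tfrac12|\mathcal Bv|^2-(\mathcal Av)(\mathcal Bv).
\]
The whole lemma therefore reduces to showing that
\[
-(\mathcal Av)(\mathcal Bv)=Q_+(\nabla v)+Q_-(\nabla\ell)v^2+\div(B)+R .
\]

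Second, I expand the cross term into its pieces and rewrite each one as a divergence plus lower-order terms, using only the Leibniz rule and the symmetry of the Hessian.

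\emph{The $\Delta_g v\cdot 2\nabla\ell(v)$ term.} I use
\[
(\Delta_g v)\,\nabla\ell(v)=\div\bigl(\nabla\ell(v)\nabla v\bigr)-D^2\ell(\nabla v,\nabla v)-D^2v(\nabla\ell,\nabla v),
\]
together with
\[
D^2v(\nabla\ell,\nabla v)=\tfrac12\nabla\ell\bigl(G(\nabla v)\bigr)=\tfrac12\div\bigl(G(\nabla v)\nabla\ell\bigr)-\tfrac12\Delta_g\ell\,G(\nabla v).
\]
This produces $2D^2\ell(\nabla v,\nabla v)$ together with a multiple of $\Delta_g\ell\,G(\nabla v)$.

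\emph{The $\lambda v\,\Delta_g v$ term.} I write $v\Delta_g v=\div(v\nabla v)-G(\nabla v)$. The $G(\nabla v)$ part combines with the previous step to give the coefficient $a=\lambda-\Delta_g\ell$ in $Q_+$.

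\emph{The zeroth-order pieces $(q_\ell+\lambda)v\cdot 2\nabla\ell(v)$.} I write $2v\nabla\ell(v)=\nabla\ell(v^2)$, so that $c\,\nabla\ell(v^2)=\div(cv^2\nabla\ell)-\div(c\nabla\ell)v^2$ with $c=q_\ell+\lambda$.

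\emph{The term $\lambda v\,\nabla\ell(v)$ arising from $\mathcal Bv$.} This is treated in the same way as the previous item, producing the $\div(a\nabla\ell)$ term in $R$.

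\emph{The term $(q_\ell+\lambda)\lambda v^2$.} This term needs no rewriting and is collected directly.

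The divergence of $\nabla\ell$ applied to $G(\nabla\ell)$ generates $2D^2\ell(\nabla\ell,\nabla\ell)$ and $-\Delta_g\ell\,G(\nabla\ell)$. These are exactly what is needed to assemble $Q_-(\nabla\ell)v^2=\bigl(2D^2\ell(\nabla\ell,\nabla\ell)-aG(\nabla\ell)\bigr)v^2$. The $\tfrac12v^2\nabla\lambda$ component of $B$ comes from rewriting $\lambda\,v\nabla v$-type terms as $\tfrac12\div(\lambda\nabla v^2)$ minus $\tfrac12\div(v^2\nabla\lambda)$. This last rewriting is where the $-\tfrac12\Delta_g\lambda\,v^2$ term in $R$ appears.

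Third, I collect all the divergence terms and check that they sum to $\div(B)$ with $B$ exactly as stated. I then check that the leftover zeroth-order terms combine into $R=\bigl(\div(a\nabla\ell)-a\lambda-\tfrac12\Delta_g\lambda\bigr)v^2$.

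No analytic difficulty arises. The main obstacle is the bookkeeping: keeping track of signs (in particular $\Box_g=-\Delta_g$, and $\lambda$ entering both $\mathcal A$ and $\mathcal B$), and making sure that the $\Delta_g\ell$ terms from the first-order and zeroth-order parts combine into the single coefficient $a$ appearing in both $Q_\pm$. I would first run the computation in Minkowski space with constant $\lambda$ as a sanity check, and then add the $\nabla\lambda$ terms.
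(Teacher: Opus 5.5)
Your approach is correct and is the standard one. You write $e^\ell\Box_g(e^{-\ell}v)=-(\mathcal Av-\mathcal Bv)$, expand the square, and rewrite the cross term $-(\mathcal Av)(\mathcal Bv)$ pointwise using Leibniz and the symmetry of the Hessian. The paper gives no proof of its own and cites \cite[Lemma~2.1]{FO25explicit}, which is this same direct expansion. Your conjugation formula and your treatment of the $\Delta_g v\cdot 2\nabla\ell(v)$ and $\lambda v\Delta_g v$ terms, which produce $Q_+(\nabla v)$ with coefficient $a$, are right. Two bookkeeping points need fixing before you carry it out.

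\emph{No separate $\lambda v\,\nabla\ell(v)$ term.} There is no extra term ``$\lambda v\,\nabla\ell(v)$ arising from $\mathcal Bv$''. The product $(\Delta_g v+(q_\ell+\lambda)v)(2\nabla\ell(v)+\lambda v)$ has exactly four terms, namely your first, second, third and last items. The $\lambda$-part of $2(q_\ell+\lambda)v\,\nabla\ell(v)$ is already in your third item, so treating it again would double count. The $\div(a\nabla\ell)v^2$ term in $R$ comes instead from splitting $c=q_\ell+\lambda=G(\nabla\ell)+a$ inside the $\div(c\nabla\ell)v^2$ produced by your third item.

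\emph{Sign of the $\Delta_g\ell\,G(\nabla\ell)$ term.} One has
\[
\div\bigl(G(\nabla\ell)\nabla\ell\bigr)=2D^2\ell(\nabla\ell,\nabla\ell)+\Delta_g\ell\,G(\nabla\ell),
\]
with a plus sign. You obtain $-aG(\nabla\ell)v^2$ only by combining this with the $-\lambda G(\nabla\ell)v^2$ hidden in the last product $-(G(\nabla\ell)+a)\lambda v^2$. The remaining $-a\lambda v^2$ from that product goes into $R$.

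With these corrections the zeroth-order leftovers are
\[
\Bigl(\div\bigl((G(\nabla\ell)+a)\nabla\ell\bigr)-(G(\nabla\ell)+a)\lambda-\tfrac12\Delta_g\lambda\Bigr)v^2=Q_-(\nabla\ell)v^2+R,
\]
and the divergence terms assemble into $\div(B)$ exactly as you describe.
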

Here $S_\ell$, a first-order operator, and $\lambda$ are notation local to this lemma and to Section~\ref{sec_proof_carleman}.

\medskip

The following lemma is the key positivity property. It shows that the boundary terms on the leaf $\mathcal H_\tau$ in the Carleman identity of Lemma~\ref{lem_carleman_eq} have a favourable sign. The same quadratic form appears in \cite[Lemma~24.1.2]{hormander2007}, where its positivity is proved when $Y_1$ and $Y_2$ are both timelike. Here $Y_2$ is also allowed to be null, which is the case needed on the leaf $\mathcal H_\tau$.

\begin{lemma}\label{lem_algebra}
Let $V$ be a finite dimensional vector space with a Lorentzian metric $g$.
Let $Y_1, Y_2, Z \in V$ and suppose that $Y_1$ is timelike, $Y_2$ is causal and that $Y_1$ and $Y_2$ are in the same timecone. Then
\begin{equation}\label{abs_nonpos}
2 g(Z, Y_1) g(Z, Y_2) - g(Z, Z) g(Y_1, Y_2) \ge 0.
\end{equation}
The above inequality is strict or $Z = cY_2$ for some $c \in \R$.
\end{lemma}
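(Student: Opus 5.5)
The plan is to reduce to an orthonormal frame adapted to $Y_1$ and treat the null and timelike cases of $Y_2$ uniformly. The inequality is homogeneous of degree two in $Z$ and invariant under positive rescaling of $Y_1,Y_2$, and replacing $(Y_1,Y_2)$ by $(-Y_1,-Y_2)$ changes nothing. So I may assume both are future-directed and $g(Y_1,Y_1)=-1$. Choose an orthonormal basis $e_0=Y_1,e_1,\dots,e_n$ with $g=\mathrm{diag}(-1,1,\dots,1)$. Rotating the spatial directions, write $Y_2=b\,e_0+c\,e_1$ with $b>0$ and $|c|\le b$, since $Y_2$ is future causal. Finally write $Z=z_0e_0+z_1e_1+z'$ with $z'\perp e_0,e_1$.

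Next I would compute the quantity explicitly. We have $g(Z,Y_1)=-z_0$, $g(Z,Y_2)=-bz_0+cz_1$, $g(Y_1,Y_2)=-b$, and $g(Z,Z)=-z_0^2+z_1^2+|z'|^2$. Hence
\[
2g(Z,Y_1)g(Z,Y_2)-g(Z,Z)g(Y_1,Y_2)=2bz_0^2-2cz_0z_1+b(-z_0^2+z_1^2+|z'|^2)=b z_0^2-2cz_0z_1+bz_1^2+b|z'|^2 .
\]
The binary form $bz_0^2-2cz_0z_1+bz_1^2$ has discriminant $4(c^2-b^2)\le0$ and leading coefficient $b>0$, so it is positive semidefinite. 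Since $b|z'|^2\ge0$, this gives \eqref{abs_nonpos}.

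For the equality case, suppose the expression vanishes. Then $z'=0$, and $bz_0^2-2cz_0z_1+bz_1^2=0$. If $|c|<b$ (so $Y_2$ is timelike), the form is positive definite and $Z=0=0\cdot Y_2$. If $|c|=b$ (so $Y_2$ is null), the form equals $b(z_0\mp z_1)^2$, which vanishes exactly when $(z_0,z_1)$ is proportional to $(b,c)$, that is, $Z=cY_2$ for some real scalar. This gives the dichotomy in the statement.

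There is no serious obstacle. The only point to check is that the normalization is legitimate, namely that the sign conventions and the scaling of $Y_1,Y_2$ preserve both the inequality and the equality case. This is clear because the expression is bilinear in $(Y_1,Y_2)$ and the condition "$Z$ is a multiple of $Y_2$" is scale-invariant.
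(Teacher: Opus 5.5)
Your proof is correct, and it takes a different route from the paper. The paper argues without coordinates, following H\"ormander's Lemma~24.1.2:
\begin{itemize}
\item For $Z\neq0$ it introduces $\tilde Z=2g(Z,Y_1)Z-g(Z,Z)Y_1$.
\item It checks that $g(\tilde Z,Y_1)>0$ and $g(\tilde Z,\tilde Z)=g(Z,Z)^2g(Y_1,Y_1)\le0$. So $\tilde Z$ is causal and lies in the cone opposite to $Y_1$, hence opposite to $Y_2$, which gives $g(\tilde Z,Y_2)\ge0$. This is exactly the inequality.
\item The equality case needs a separate argument. Orthogonal causal vectors are null and proportional, so $Y_2$ and $Z$ are null. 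Then $g(Z,Y_1)\neq0$ forces $g(Z,Y_2)=0$.
\end{itemize}

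You instead normalize $Y_1$ and use an adapted orthonormal frame to place $Y_2$ in a timelike $2$-plane. You then compute the form exactly as $bz_0^2-2cz_0z_1+bz_1^2+b|z'|^2$. The inequality and the equality case then both follow from the single discriminant condition $4(c^2-b^2)\le0$. Your normalizations are legitimate, because the expression is bilinear in $(Y_1,Y_2)$ and $b>0$ encodes the same-cone hypothesis.

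What each route buys:
\begin{itemize}
\item Your explicit formula gives a little more for free. When $Y_2$ is null it reads $b(z_0\mp z_1)^2+b|z'|^2$, a positive semidefinite form whose kernel is visibly $\R Y_2$. This makes Corollary~\ref{cor_algebra} essentially immediate.
\item The paper's version is frame-free. It exposes the geometric mechanism, namely the sign of the inner product of two causal vectors in opposite cones, applied to $\tilde Z$.
\end{itemize}

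One cosmetic point: you use the letter $c$ both for the coefficient in $Y_2=be_0+ce_1$ and for the scalar in $Z=cY_2$. Rename one of them.
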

Observe that \eqref{abs_nonpos} holds with equality if $Y_2$ is null and $Z = cY_2$ for any $c \in \R$.
\begin{proof}
We write $Z = s Y_1 + Z'$ where $g(Y_1, Z') = 0$, and consider
\[
\tilde Z = 2 g(Z, Y_1) Z - g(Z, Z) Y_1.
\]
Observe that \eqref{abs_nonpos} is equivalent to $g(\tilde Z, Y_2) \ge 0$.

The case $Z = 0$ is trivial. Let us assume that $Z \ne 0$. We have
\begin{align*}
g(\tilde Z, Y_1)
&= 2 g(Z, Y_1)^2 - g(Z, Z) g(Y_1, Y_1)\\
&= 2 s^2 g(Y_1, Y_1)^2 - \bigl(s^2 g(Y_1, Y_1) + g(Z', Z')\bigr) g(Y_1, Y_1)\\
&= s^2 g(Y_1, Y_1)^2 - g(Z', Z') g(Y_1, Y_1) > 0,
\end{align*}
since $Z'$ is spacelike due to the orthogonality $g(Y_1, Z') = 0$, and since $s \ne 0$ or $Z' \ne 0$ due to $Z \ne 0$. In particular, $\tilde Z \ne 0$. Moreover,
\begin{equation}\label{z_causal}
\begin{aligned}
g(\tilde Z, \tilde Z)
&= 4 g(Z, Y_1)^2 g(Z, Z) - 4 g(Z, Y_1)^2 g(Z, Z) + g(Z, Z)^2 g(Y_1, Y_1)\\
&= g(Z, Z)^2 g(Y_1, Y_1) \le 0.
\end{aligned}
\end{equation}
Thus $\tilde Z$ is causal. By $g(\tilde Z, Y_1) > 0$ the vectors $Y_1$ and $\tilde Z$ lie in opposite timecones, as do $Y_2$ and $\tilde Z$, since $Y_1$ and $Y_2$ lie in the same timecone. Hence $g(\tilde Z, Y_2) \ge 0$.

It remains to consider the case $g(\tilde Z, Y_2) = 0$. As $\tilde Z$ and $Y_2$ are causal, the orthogonality implies that $Y_2$ is null and $\tilde Z = aY_2$ for some $a \in \R \setminus 0$. Hence $Z$ is null due to \eqref{z_causal}. As \eqref{abs_nonpos} holds with equality and $Z$ is null, we have
\[
g(Z, Y_1) g(Z, Y_2) = 0.
\]
As $Y_1$ is timelike and $Z$ is null, we have $g(Z, Y_1) \ne 0$. Hence $g(Z, Y_2) = 0$. As both $Z$ and $Y_2$ are null, there is $c \ne 0$ such that $Z = cY_2$.
\end{proof}

In the proof of Theorem~\ref{thm_carleman}, Lemma~\ref{lem_algebra} enters through the following corollary.

\begin{corollary}\label{cor_algebra}
Let $V$ be a finite dimensional vector space with a Lorentzian metric $g$.
Let $Y_1, Y_2, Z \in V$ and suppose that $Y_1$ is timelike, $Y_2$ is causal and that $Y_1$ and $Y_2$ are in the same timecone.
Let a subspace $W \subset V$ satisfy $V = \R Y_2 \oplus W$, and let $g^+$ be an inner product on $W$.
Then there is $\epsilon > 0$ such that
\begin{equation}\label{abs_nonpos_compl}
\epsilon\, g^+(Z', Z') \le 2 g(Z, Y_1) g(Z, Y_2) - g(Z, Z) g(Y_1, Y_2),
\end{equation}
where $Z = sY_2 + Z' \in \R Y_2 \oplus W$.
\end{corollary}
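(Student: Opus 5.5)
The plan is a compactness argument applied to the quadratic form $F(Z)=2g(Z,Y_1)g(Z,Y_2)-g(Z,Z)g(Y_1,Y_2)$. By Lemma~\ref{lem_algebra}, $F\ge0$ on $V$.

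\textbf{Step 1: $F$ depends only on $Z'$.} We may assume $Y_2$ is null, since otherwise both $Y_1$ and $Y_2$ are timelike and Lemma~\ref{lem_algebra} already gives strict positivity for $Z\neq0$. Write $Z=sY_2+Z'$ and expand. Since $g(Y_2,Y_2)=0$, the term $g(Z,Y_2)$ equals $g(Z',Y_2)$. For the rest,
\[
2g(Z,Y_1)g(Z',Y_2)-g(Z,Z)g(Y_1,Y_2)=2s\,g(Y_2,Y_1)g(Z',Y_2)+2g(Z',Y_1)g(Z',Y_2)-\bigl(2s\,g(Y_2,Z')+g(Z',Z')\bigr)g(Y_1,Y_2).
\]
The two terms containing $s$ cancel, so $F(sY_2+Z')=F(Z')$.

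\textbf{Step 2: the case where $Y_2$ is timelike.} Then $F$ is a quadratic form that is positive on $V\setminus\{0\}$. Hence $F(Z)\ge c\,|Z|^2$ for some norm on $V$. In this case $Z\mapsto Z'$ is a linear projection, so $g^+(Z',Z')\le C|Z|^2$. This gives \eqref{abs_nonpos_compl}.

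\textbf{Step 3: the case where $Y_2$ is null.} Restrict $F$ to $W$. For $Z'\in W\setminus\{0\}$, Lemma~\ref{lem_algebra} gives either $F(Z')>0$ or $Z'=cY_2$. The second option forces $Z'\in\R Y_2\cap W=\{0\}$, which is excluded. So $F|_W$ is positive definite. The $g^+$-unit sphere of $W$ is compact, so there is $\epsilon>0$ with $F(Z')\ge\epsilon\,g^+(Z',Z')$ on $W$. Step 1 then gives $F(Z)=F(Z')\ge\epsilon\,g^+(Z',Z')$.

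The only genuine point is the cancellation in Step 1 in the null case. This is what allows the degenerate direction $Y_2$ to be quotiented out. The rest is standard finite-dimensional compactness.
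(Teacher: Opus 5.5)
Your proof is correct and is essentially the paper's argument: the same case split on whether $Y_2$ is timelike or null, the same invariance $F(sY_2+Z')=F(Z')$ in the null case (the paper reads it off from the vanishing of the polar form $\Phi(Y_2,Z')$, while you expand directly), and the same use of $\R Y_2\cap W=\{0\}$ together with compactness of the $g^+$-unit sphere. One small fix: in the timelike case, the statement of Lemma~\ref{lem_algebra} alone does not exclude equality for $Z=cY_2$, so you should add the one-line check $F(cY_2)=c^2\,g(Y_1,Y_2)\,g(Y_2,Y_2)>0$ for $c\neq0$, as the paper does.
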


\begin{proof}
Let $\Phi(Z)$ denote the right-hand side of \eqref{abs_nonpos_compl}, a quadratic form in $Z$. By Lemma~\ref{lem_algebra}, $\Phi$ can only degenerate in the direction $Y_2$, so we distinguish the cases $Y_2$ timelike and $Y_2$ null.

If $Y_2$ is timelike, then $\Phi(cY_2)=c^2g(Y_2,Y_1)g(Y_2,Y_2)>0$ for $c\neq0$, so $\Phi$ is positive definite on $V$ by Lemma~\ref{lem_algebra}, and \eqref{abs_nonpos_compl} follows. If $Y_2$ is null, then $\Phi(Y_2)=0$, and the polar form of $\Phi$ satisfies
\begin{align*}
\Phi(Y_2,Z')&=g(Y_2,Y_1)g(Z',Y_2)+g(Z',Y_1)g(Y_2,Y_2)-g(Y_2,Z')g(Y_1,Y_2)\\
&=g(Z',Y_1)g(Y_2,Y_2)=0;
\end{align*}
hence $\Phi(sY_2+Z')=\Phi(Z')$. By Lemma~\ref{lem_algebra}, $\Phi(Z')>0$ unless $Z'\in\R Y_2\cap W=\{0\}$. So $\Phi$ is positive definite on $W$, and $\epsilon$ exists by compactness of the unit sphere of $g^+$.
\end{proof}

The next lemma gives an integration by parts formula on $\overline Q\cap\overline\uu$ and makes use of the surface measure induced by the Riemannian metric $h$.

\begin{lemma}\label{lem_stokes}
Let $X=\overline Q\cap\overline\uu$ be as in Section~\ref{sec_setup}, and let $\nu$ be the outer unit $h$-normal of $\uu$ along $\Gamma$. Then for every smooth vector field $B$ on $X$,
\[
\int_{Q\cap\uu}\div_g(B)\,dV \;=\; -\int_{\mathcal H\cap\overline\uu}\frac{\langle B,d\varphi\rangle}{|\nabla_h\varphi|_h}\,d\sigma_h \;+\; \int_\Gamma h(B,\nu)\,d\sigma_h^\Gamma .
\]
\end{lemma}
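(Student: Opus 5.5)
The plan is to apply the divergence theorem on the compact manifold with corners $X=\overline Q\cap\overline\uu$ and then identify the two boundary contributions. Since $\tau\in\mathcal R$, the leaf $\mathcal H$ meets $\partial\uu$ transversally, so Stokes' theorem holds on $X$; the corner $\mathcal H\cap\partial\uu$ has measure zero for both surface measures and contributes nothing. The density of $g$ coincides with that of $h$ (Lemma~\ref{lem_aux_riem_metric}). Hence $\div_g B\,dV=\div_h B\,dV$: both equal $d(\iota_B dV)/dV$ for the common volume form, so the Lie derivative formulation shows the divergences agree. We may therefore compute the divergence with the Riemannian metric $h$ and use the standard Riemannian divergence theorem,
\[
\int_{Q\cap\uu}\div_h B\,dV_h=\int_{\partial X} h(B,N)\,d\sigma_h ,
\]
where $N$ is the outward $h$-unit normal to $\partial X$.

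The next step is to split $\partial X=(\mathcal H\cap\overline\uu)\cup\Gamma$. On $\Gamma\subset\partial\uu$ the outward normal of $X$ is the outward normal $\nu$ of $\uu$, since near interior points of $\Gamma$ the set $X$ coincides locally with $\overline\uu$. This immediately gives the term $\int_\Gamma h(B,\nu)\,d\sigma_h^\Gamma$.

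On $\mathcal H=\{\varphi=0\}$, the region $Q=\{\varphi>0\}$ lies on the side where $\varphi$ increases. The outward $h$-normal is therefore $N=-\nabla_h\varphi/|\nabla_h\varphi|_h$, using that $d\varphi=df\neq0$. Then
\[
h(B,N)=-\frac{h(B,\nabla_h\varphi)}{|\nabla_h\varphi|_h}=-\frac{\langle B,d\varphi\rangle}{|\nabla_h\varphi|_h},
\]
because $h(B,\nabla_h\varphi)=d\varphi(B)$ independently of the metric. This gives exactly the first term on the right-hand side.

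The only point requiring care is the justification that the divergence theorem applies on a manifold with corners with the corner ignored. This is the standard argument: approximate by a smooth cut-off near the corner, or cite \cite{lee2002}. The transversality coming from $\tau\in\mathcal R$ guarantees that the corner is a codimension-two submanifold and that both faces are smooth up to it. I expect no real obstacle beyond this bookkeeping, together with the sign convention for the normal on $\mathcal H$.
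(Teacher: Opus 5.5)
Your proposal is correct and follows essentially the paper's route: Stokes' theorem on the manifold with corners $X$, using $dV_g=dV_h$, and then identifying the outward $h$-unit normal as $\nu$ on $\Gamma$ and as $-\nabla_h\varphi/|\nabla_h\varphi|_h$ on $\mathcal H$. The only difference is cosmetic: you note $\div_g=\div_h$ and quote the Riemannian divergence theorem, whereas the paper checks $\iota_B\,dV_h=h(B,\eta)\,d\sigma_h$ directly with an $h$-orthonormal frame on each boundary face.
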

\begin{proof}
By Section~\ref{sec_setup}, $X$ is a compact manifold with corners and $\partial X=(\mathcal H\cap\overline\uu)\cup\Gamma$. Since $dV_g=dV_h$, we have $\div_g(B)\,dV_g=d(\iota_B\,dV_h)$. Stokes' theorem on $X$  is valid on manifolds with corners~\cite{lee2002} and gives
\[
\int_X\div_g(B)\,dV_g=\int_{\partial X}\iota_B\,dV_h .
\]

On each of the two smooth pieces of $\partial X$, let $\eta$ be the outer unit $h$-normal. Thus $\eta=\nu$ on $\Gamma$, and $\eta=\nu_{\mathcal H}:=-\nabla_h\varphi/|\nabla_h\varphi|_h$ on $\mathcal H$, since $X\subset\{\varphi\ge0\}$. Write $B=h(B,\eta)\,\eta+B^{\tan}$, with $B^{\tan}$ tangent to the boundary. Let $e_1,\dots,e_n$ be an $h$-orthonormal frame of the boundary, positively oriented with respect to the boundary orientation. Then $(\eta,e_1,\dots,e_n)$ is $h$-orthonormal and positively oriented, so
\[
dV_h(\eta,e_1,\dots,e_n)=1=d\sigma_h(e_1,\dots,e_n).
\]
Since $dV_h$ is alternating and $B^{\tan}$ is tangential, $dV_h(B^{\tan},e_1,\dots,e_n)=0$. Hence
\[
(\iota_B\,dV_h)(e_1,\dots,e_n)=dV_h(B,e_1,\dots,e_n)=h(B,\eta),
\]
that is, $\iota_B\,dV_h=h(B,\eta)\,d\sigma_h$ on each piece. This gives the formula, since on $\mathcal H$
\[
h(B,\nu_{\mathcal H})=-\frac{h(B,\nabla_h\varphi)}{|\nabla_h\varphi|_h}=-\frac{\langle B,d\varphi\rangle}{|\nabla_h\varphi|_h}.
\]
\end{proof}

\subsection{Proof of the Carleman estimate}\label{sec_proof_carleman}
\begin{proof}[Proof of Theorem~\ref{thm_carleman}]

We first prove the estimate with $\Box_gu$ in place of $Pu$, and for real valued $u$. Since $\Box_g$ is real, this estimate then holds for complex valued $u$, by applying it to $\operatorname{Re}u$ and $\operatorname{Im}u$. Constants $c,C>0$ do not depend on $\sigma$, $u$ or $\tau$, and may change from line to line. All pointwise bounds below are obtained from continuous functions on the compact set $\overline\uu$.

\medskip\noindent
\textbf{1. The bulk terms.}
Fix $\mu,\delta>0$ with $|D^2\psi(Z,Z)|\le\mu|Z|_h^2$ and $|G(\nabla\psi)|\ge\delta$ on $\overline\uu$. Let $\zeta\ge\zeta_0$, with $\zeta_0$ fixed below, and set $\tilde\psi=e^{\zeta\psi}$ and $\ell=\sigma\tilde\psi$. Then $\nabla\ell=\sigma\zeta\tilde\psi\,\nabla\psi$ and
\begin{equation}\label{eq_hess_ell}
D^2\ell(Z,Z)=\sigma\zeta\tilde\psi\bigl(D^2\psi(Z,Z)+\zeta\,g(\nabla\psi,Z)^2\bigr).
\end{equation}
Let $v=e^\ell u$, $a=\sigma\zeta^2\delta\tilde\psi$ and $\lambda=a+\Delta_g\ell$. Lemma~\ref{lem_carleman_eq} with $U=Q\cap\uu$, without its two squares, gives
\begin{equation}\label{star}
Q_+(\nabla v)+Q_-(\nabla\ell)\,v^2\le\tfrac12\bigl|e^\ell\Box_gu\bigr|^2-\div(B)-R .
\end{equation}
By \eqref{eq_hess_ell},
\[
Q_+(Z)=2\sigma\zeta\tilde\psi\,D^2\psi(Z,Z)+2\sigma\zeta^2\tilde\psi\bigl(g(\nabla\psi,Z)^2+\tfrac\delta2G(Z)\bigr).
\]
Write $Z=s\nabla\psi+Z'$ with $g(\nabla\psi,Z')=0$; then $Z'$ is spacelike and
\[
g(\nabla\psi,Z)^2+\tfrac\delta2G(Z)=s^2|G(\nabla\psi)|\bigl(|G(\nabla\psi)|-\tfrac\delta2\bigr)+\tfrac\delta2G(Z')\ge\tfrac{\delta^2}2s^2+\tfrac\delta2G(Z'),
\]
which yields $Q_+(Z)\ge2\sigma\zeta\tilde\psi\,(\zeta c-\mu)\,|Z|_h^2$.
Since $G(\nabla\ell)<0$, we have $-a\,G(\nabla\ell)\ge0$, and
\[
Q_-(\nabla\ell)\ge2D^2\ell(\nabla\ell,\nabla\ell)=2\sigma^3\zeta^3\tilde\psi^3\bigl(D^2\psi(\nabla\psi,\nabla\psi)+\zeta\,G(\nabla\psi)^2\bigr)\ge2\sigma^3\zeta^3\tilde\psi^3\bigl(\zeta\delta^2-\mu|\nabla\psi|_h^2\bigr).
\]
Taking $\zeta_0$ large enough, both brackets are positive on $\overline\uu$ for $\zeta\ge\zeta_0$. Moreover $a$, $\lambda$ and $\nabla\ell$ are $O(\sigma)$. Thus
\begin{equation}\label{eq_bulk}
Q_+(Z)\ge c\sigma|Z|_h^2,\qquad Q_-(\nabla\ell)\ge c\sigma^3,\qquad |R|\le C\sigma^2v^2\qquad\text{on }\overline\uu .
\end{equation}
Integrating \eqref{star} over $Q\cap\uu$, using Lemma~\ref{lem_stokes} and \eqref{eq_bulk}, and absorbing $C\sigma^2\|v\|^2_Q$ for $\sigma_0$ large enough,
\begin{equation}\label{eq_integrated}
c\sigma\|\nabla_h v\|^2_Q+c\sigma^3\|v\|^2_Q-\int_{\mathcal H\cap\overline\uu}\frac{\langle B,d\varphi\rangle}{|\nabla_h\varphi|_h}\,d\sigma_h\le\tfrac12\|e^\ell\Box_gu\|^2_Q+\Bigl|\int_\Gamma h(B,\nu)\,d\sigma_h^\Gamma\Bigr| .
\end{equation}

\medskip\noindent
\textbf{2. The boundary terms.}
With $\beta:=\Delta_g\tilde\psi$ and $\Upsilon:=\zeta^2\delta\tilde\psi+\beta$, we have $\lambda=\sigma\Upsilon$, $q_\ell+\lambda=\sigma^2\zeta^2\tilde\psi^2G(\nabla\psi)+\sigma\zeta^2\delta\tilde\psi$ and $S_\ell v=2\sigma\zeta\tilde\psi\,g(\nabla\psi,\nabla v)$. Hence $B=B_{\rm quad}+B_{\rm mix}+B_0$, where
\begin{align*}
B_{\rm quad}&=\sigma\zeta\tilde\psi\bigl(G(\nabla v)\nabla\psi-2g(\nabla\psi,\nabla v)\nabla v\bigr),\\
B_{\rm mix}&=-\sigma\Upsilon\,v\nabla v,\\
B_0&=-\bigl(\sigma^3\zeta^3\tilde\psi^3G(\nabla\psi)+\sigma^2\zeta^3\delta\tilde\psi^2\bigr)v^2\nabla\psi+\tfrac\sigma2v^2\nabla\Upsilon .
\end{align*}
The vector $\nabla\psi$ is timelike and $\nabla f=-L$ is null, and both are past-directed (Lemma~\ref{lem_monotone}(c)). Hence
\begin{equation}\label{eq_psi_f}
g(\nabla\psi,\nabla f)<0\qquad\text{on }\overline\uu .
\end{equation}

On the null boundary $\mathcal H\cap\overline\uu$ we use $\langle Z,d\varphi\rangle=g(Z,\nabla f)$. The terms coming from $B_0$ and $B_{\rm mix}$ are easy. The main term comes from $B_{\rm quad}$, and requires Lemma~\ref{lem_algebra}.

We start with the two easy terms. Since $G(\nabla\psi)<0$, \eqref{eq_psi_f} gives $G(\nabla\psi)g(\nabla\psi,\nabla f)\ge c$ on $\overline\uu$. The other terms in $B_0$ carry at most $\sigma^2$, so
\[
-\langle B_0,d\varphi\rangle=\sigma^3\zeta^3\tilde\psi^3\,G(\nabla\psi)g(\nabla\psi,\nabla f)\,v^2+O(\sigma^2)v^2\ge c\sigma^3v^2-C\sigma^2v^2 .
\]
Since $g(\nabla v,\nabla f)=-Lv$,
\[
|\langle B_{\rm mix},d\varphi\rangle|=\sigma|\Upsilon||v||Lv|\le C\sigma^2v^2+C(Lv)^2 .
\]

We now turn to the main term. We have
\[
-\langle B_{\rm quad},d\varphi\rangle=\sigma\zeta\tilde\psi\,\Phi(\nabla v),
\qquad
\Phi(Z)=2g(Z,\nabla\psi)g(Z,\nabla f)-G(Z)\,g(\nabla\psi,\nabla f).
\]
This is the quadratic form of Lemma~\ref{lem_algebra}, with
\[
Y_1=\nabla\psi\ \text{timelike},\qquad Y_2=\nabla f=-L\ \text{null},
\]
both past-directed. Since $Y_2$ is null, $\Phi(L)=0$, so $\Phi$ does not control the component of $\nabla v$ along $L$. We write
\[
\nabla v=sL+Z',\qquad h(Z',L)=0 .
\]
Corollary~\ref{cor_algebra}, with $W=L^{\perp_h}$ and $g^+=h|_W$, gives, with a constant uniform on $\overline\uu$,
\[
\Phi(\nabla v)\ge c\,|Z'|_h^2 .
\]
The component $Z'$ controls $Lv$. Since $G(L)=0$,
\[
Lv=g(\nabla v,L)=s\,G(L)+g(Z',L)=g(Z',L),\qquad\text{so}\qquad (Lv)^2\le C|Z'|_h^2 .
\]
Hence
\[
-\langle B_{\rm quad},d\varphi\rangle=\sigma\zeta\tilde\psi\,\Phi(\nabla v)\ge c\sigma|Z'|_h^2\ge c^\prime \sigma(Lv)^2 .
\]
So the main term controls the derivative of $v$ along the generators of $\mathcal H$.

Adding the three terms, for $\sigma_0$ large enough,
\[
-\langle B,d\varphi\rangle\ge c\bigl(\sigma^3v^2+\sigma(Lv)^2\bigr)\qquad\text{on }\mathcal H\cap\overline\uu ,
\]
and therefore
\begin{equation}\label{eq_bdry_H}
-\int_{\mathcal H\cap\overline\uu}\frac{\langle B,d\varphi\rangle}{|\nabla_h\varphi|_h}\,d\sigma_h\ge c\bigl(\sigma^3|v|^2_{\mathcal H}+\sigma|Lv|^2_{\mathcal H}\bigr).
\end{equation}

On the lateral boundary $\Gamma$, which has no causal type, we only bound absolute values. Since $|\nu|_h=1$, the decomposition of $B$ gives, for $\sigma\ge1$,
\[
|h(B,\nu)|\le|B|_h\le C\bigl(\sigma^3v^2+\sigma|v||\nabla_h v|_h+\sigma|\nabla_h v|_h^2\bigr)\le C\bigl(\sigma^3v^2+\sigma|\nabla_h v|_h^2\bigr),
\]
and $|\nabla_h v|_h^2=(\partial_\nu v)^2+|\nabla_h^{\tan}v|_h^2$ on $\Gamma$. Hence
\begin{equation}\label{eq_bdry_Gamma}
\Bigl|\int_\Gamma h(B,\nu)\,d\sigma_h^\Gamma\Bigr|\le C\bigl(\sigma^3|v|^2_\Gamma+\sigma|\partial_\nu v|^2_\Gamma+\sigma|\nabla_h^{\tan}v|^2_\Gamma\bigr).
\end{equation}

\medskip\noindent
\textbf{3. End of the proof.}
Inserting \eqref{eq_bdry_H} and \eqref{eq_bdry_Gamma} into \eqref{eq_integrated}, we get
\begin{multline}\label{eq_v_estimate}
\sigma^3\|v\|^2_Q+\sigma\|\nabla_h v\|^2_Q+\sigma^3|v|^2_{\mathcal H}+\sigma|Lv|^2_{\mathcal H}\\
\le C\|e^\ell\Box_gu\|^2_Q+C\bigl(\sigma^3|v|^2_\Gamma+\sigma|\partial_\nu v|^2_\Gamma+\sigma|\nabla_h^{\tan}v|^2_\Gamma\bigr).
\end{multline}
It remains to return to $u$ and $P$. We have
\[
e^{\sigma\tilde\psi}\nabla_h u=\nabla_h v-\sigma v\,\nabla_h\tilde\psi,
\qquad
e^{\sigma\tilde\psi}\Box_gu=e^{\sigma\tilde\psi}Pu-q\,v ,
\]
and similarly for $L$, $\partial_\nu$ and $\nabla_h^{\tan}$. The estimate of Theorem~\ref{thm_carleman} is obtained upon replacing $v$ by $e^{\sigma\tilde\psi}u$ in \eqref{eq_v_estimate}, using these identities and the triangle inequality, and absorbing the errors by taking $\sigma\ge\sigma_0$ with $\sigma_0$ large enough, depending on $\|q\|_{L^\infty(\overline\uu)}$.
\end{proof}

\begin{remark}
In fact the proof controls the whole tangential gradient of $u$ on $\mathcal H_\tau$, and not only $Lu$. Indeed, $T\mathcal H=L^{\perp_g}$, so $dv(X)=g(Z',X)$ for every $X\in T\mathcal H$. We keep only $Lu$, since it is the only tangential derivative that enters the recovery formula of Lemma~\ref{lem_recovery}.
\end{remark}

\section{Proof of the stability results}\label{sec_proof_main}

\subsection{Proof of Theorem~\ref{thm_main}}

Let $q_1,q_2$ be admissible potentials. We write $V_{j,\tau}=V_{q_j,\tau}$ and $v_{j,\tau}=v_{q_j,\tau}$. We define as well
\begin{equation}
   \bar q=q_1-q_2, \quad w_\tau=v_{1,\tau}-v_{2,\tau}.
\end{equation}

Notice that there is an integer $k$, depending only on $n$, such that for admissible $q$ (that is $q$ satisfying~\eqref{def_of_admissible}) one has
\begin{equation}
    \label{bound_admissible}
    \sup_{\tau\in[\tau_a,\tau_b]}\|v_{q,\tau}\|_{L^\infty(Q_\tau\cap K)}\le C,
\end{equation}
with $C$ depending only on $A$, $\chi$ and the geometry. Indeed, fix $N>(n+1)/2$, let $\alpha,w_0,\dots,w_N$ be as in Steps~1--2 of the proof of Proposition~\ref{prop_prog_wave}, and let
\[
v_N=\sum_{j=0}^Nw_j\,\varphi_\tau^j .
\]
The functions $\alpha,w_0,\dots,w_N$ do not depend on $\tau$, and their $C^{N+1}$ norms on compact sets are bounded in terms of $A$ if $k$ is large enough. By \eqref{eq:ansatz} and \eqref{eq:recursion}, the remainder
\[
e=V_{q,\tau}-\alpha\,\delta(\varphi_\tau)-v_N\,H(\varphi_\tau)
\]
vanishes outside $J^+(\supp\chi)$ and solves
\[
Pe=-\varphi_\tau^N\,(Pw_N)\,H(\varphi_\tau),
\]
a source in $C^{N-1}$ bounded in terms of $A$. Energy estimates for the wave equation~\cite[Chapter 23]{hormander2007} and the Sobolev embedding give $\|e\|_{L^\infty(K)}\le C$, uniformly in $\tau\in[\tau_a,\tau_b]$. Since $v_{q,\tau}=v_N+e$ on $Q_\tau$, this gives~\eqref{bound_admissible}. We fix $k$ such that~\eqref{bound_admissible} holds.

\medskip

The following lemma is an immediate consequence of the system~\eqref{system_a_v} and of the strictly positive lower bound~\eqref{eq:alpha-lower} for $\alpha$ on $K$. It is the crucial point for recovering the unknown $\bar q$ since it is multiplied by $\alpha$ on the leaf $\mathcal H_\tau$ which is~\textit{the region controlled by our data} via the Carleman estimate of Theorem~\ref{thm_carleman}.

\begin{lemma}[Recovery formula]\label{lem_recovery}
Let $\tau\in[\tau_a,\tau_b]$. Then
\begin{equation}\label{eq_recovery}
\bar q\,\alpha+2Lw_\tau+\theta w_\tau=0\ \text{ on }\Ht,\qquad (\Box_g+q_1)w_\tau=-\bar q\,v_{2,\tau}\ \text{ in }Q_\tau .
\end{equation}
In particular, ~\eqref{eq:alpha-lower} implies $|\bar q|^2\le C \bigl(|Lw_\tau|^2+|w_\tau|^2\bigr)$ on $\Ht\cap K$.
\end{lemma}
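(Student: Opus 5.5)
The plan is to subtract the two systems \eqref{system_a_v} written for $q_1$ and $q_2$, using the same leaf $\mathcal H_\tau$ and the same source $\chi$. The key observation is that $\alpha$ solves $2L\alpha+\theta\alpha=\chi$, vanishes near $\gamma(I)$, and is given by the explicit formula \eqref{eq:alpha-explicit}. Therefore $\alpha$ is the same function for both potentials. Only the $v$'s differ.

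\textbf{Transport identity on $\Ht$.} The fourth line of \eqref{system_a_v} for $j=1,2$ reads $2Lv_{j,\tau}+\theta v_{j,\tau}=-(\Box_g+q_j)\alpha$ on $\Ht$. Subtracting the two, the $\Box_g\alpha$ terms cancel and we get
\[
2Lw_\tau+\theta w_\tau=-(q_1-q_2)\alpha=-\bar q\,\alpha \quad\text{on }\Ht .
\]
This is the first identity in \eqref{eq_recovery}.

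\textbf{Equation for $w_\tau$ in $Q_\tau$.} The first line of \eqref{system_a_v} gives $(\Box_g+q_j)v_{j,\tau}=0$ in $Q_\tau$. Write
\[
(\Box_g+q_1)v_{2,\tau}=(\Box_g+q_2)v_{2,\tau}+\bar q\,v_{2,\tau}=\bar q\,v_{2,\tau}.
\]
Hence $(\Box_g+q_1)w_\tau=0-\bar q\,v_{2,\tau}$ in $Q_\tau$, which is the second identity.

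\textbf{Pointwise bound on $\Ht\cap K$.} By \eqref{eq:alpha-lower}, $\alpha\ge\delta_\alpha>0$ on $K$, so $\bar q=-\alpha^{-1}(2Lw_\tau+\theta w_\tau)$ there. The function $\theta=\Box_g f$ is smooth and independent of $\tau$, hence bounded on the compact set $K$. Consequently
\[
|\bar q|^2\le 2\delta_\alpha^{-2}\bigl(4|Lw_\tau|^2+\|\theta\|_{L^\infty(K)}^2|w_\tau|^2\bigr),
\]
which gives the claimed bound with $C$ depending only on $\delta_\alpha$ and $\theta$, uniformly in $\tau$.

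\textbf{Main obstacle.} The only delicate point is justifying that $\alpha$ is genuinely independent of $q$. This holds because $\alpha$ is defined by the first transport equation of \eqref{eq:recursion} with zero data near the vertex, and that equation involves only $L$, $\theta$ and $\chi$. A second point to check is that the restriction of the fourth line to $\Ht$ is legitimate. This follows from the smoothness of $v_{j,\tau}$ on $\diam$, established in Proposition~\ref{prop_prog_wave}.
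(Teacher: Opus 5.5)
Your proposal is correct and takes the same route as the paper, which calls the lemma an immediate consequence of the system \eqref{system_a_v} and of the lower bound \eqref{eq:alpha-lower}: you subtract the two systems, use that $\alpha$ is built from $\chi$, $L$ and $\theta$ alone and is therefore common to $q_1$ and $q_2$, and divide by $\alpha\ge\delta_\alpha$ on $K$. Your explicit constant $2\delta_\alpha^{-2}\max\bigl(4,\|\theta\|^2_{L^\infty(K)}\bigr)$ is fine, and it is uniform in $\tau$ because neither $\alpha$ nor $\theta$ depends on $\tau$.
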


We can now finish the proof of Theorem~\ref{thm_main}.

\begin{proof}[Proof of Theorem~\ref{thm_main}]
Pick a function $\psi$ such that the assumptions of Theorem~\ref{thm_carleman} hold. One can take for instance $\psi=t_\diam$ a temporal function given by global hyperbolicity. We apply Theorem~\ref{thm_carleman} with $\zeta=\zeta_0$ and $q=q_1$. Since $\|q_1\|_{L^\infty}\le A$, its constants depend only on $A$ and the geometry, and not on $\tau$.

Using~\eqref{eq_recovery} we can apply Theorem~\ref{thm_carleman} to $u=w_\tau|_{X_\tau}$ with $Pu=-\bar q\,v_{2,\tau}$. Dropping the bulk terms on the left and using \eqref{bound_admissible} we get
\[
\sigma^3|e^{\sigma\tilde\psi}w_\tau|^2_{\Ht}+\sigma|e^{\sigma\tilde\psi}Lw_\tau|^2_{\Ht}\le C\|e^{\sigma\tilde\psi}\bar q\|^2_{L^2(K)}+C E_{\tau,\sigma},
\]
where $E_{\tau,\sigma}=\sigma^3|e^{\sigma\tilde\psi}w_\tau|^2_{\Gamma_\tau}+\sigma|e^{\sigma\tilde\psi}\partial_\nu w_\tau|^2_{\Gamma_\tau}+\sigma|e^{\sigma\tilde\psi}\nabla_h^{\tan}w_\tau|^2_{\Gamma_\tau}$. The pointwise estimate of Lemma~\ref{lem_recovery} yields
\[
|e^{\sigma\tilde\psi}\bar q|^2_{\Ht\cap K}\le C_1\bigl(|e^{\sigma\tilde\psi}Lw_\tau|^2_{\Ht}+|e^{\sigma\tilde\psi}w_\tau|^2_{\Ht}\bigr)\le\frac{C_2}{\sigma}\Bigl(\|e^{\sigma\tilde\psi}\bar q\|^2_{L^2(K)}+ E_{\tau,\sigma}\Bigr).
\]
Recall that $f(K) \subset[\tau_a,\tau_b]$. We can now integrate over the leaves $\mathcal H_\tau$ using the coarea formula (see for instance~\cite[Section 3.4]{evans2015}). We find
\begin{align*}
\|e^{\sigma\tilde\psi}\bar q\|^2_{L^2(K)}&=\int_{\tau_a}^{\tau_b}\!\!\int_{\Ht\cap K}\frac{e^{2\sigma\tilde\psi}|\bar q|^2}{|\nabla_hf|_h}\,d\sigma_h\,d\tau\le C \int_{\tau_a}^{\tau_b}|e^{\sigma\tilde\psi}\bar q|^2_{\Ht\cap K}\,d\tau\\
&\le\frac{C}{\sigma}\|e^{\sigma\tilde\psi}\bar q\|^2_{L^2(K)}+\frac{C}{\sigma}\int_{\tau_a}^{\tau_b}E_{\tau,\sigma}\,d\tau .
\end{align*}
We now fix $\sigma=\sigma_1$ large enough to absorb the first term. We then have $E_{\tau,\sigma_1}\le\sigma_1^3e^{2\sigma_1\tilde\psi_+}E_\tau$ with $\tilde\psi_+=\max_{\overline\uu}\tilde\psi$, and $e^{2\sigma_1\tilde\psi}\ge e^{2\sigma_1\tilde\psi_-}$ on $K$ with $\tilde\psi_-=\min_K\tilde\psi$. Hence
\[
\|q_1-q_2\|^2_{L^2(K)}\le C ^2\int_{\tau_a}^{\tau_b}E_\tau\,d\tau=C^2\,\mathcal E^2,
\]
and the theorem is proved.
\end{proof}

\begin{remark}\label{rem_weight}
The proof of Theorem~\ref{thm_main} uses only the properties of $\psi$ required in Theorem~\ref{thm_carleman}: $d\psi$ is timelike and $\psi$ increases along future-directed timelike curves. Besides the constants of Theorem~\ref{thm_carleman}, the weight enters the final constant only through the factor $e^{2\sigma_1(\tilde\psi_+-\tilde\psi_-)}$, which is finite since $\overline\uu$ is compact. No further property of the weight is needed. This is in sharp contrast with the usual applications of Carleman estimates in which the weight must be chosen carefully. This is another manifestation of the fact that, in spirit, Theorem~\ref{thm_carleman} is closer to an energy estimate than to a typical Carleman estimate (see Remark~\ref{rem_powers}).
\end{remark}

\subsection{Proof of Theorem~\ref{thm_calderon}}

The proof of Theorem~\ref{thm_calderon} essentially follows from Theorem~\ref{thm_main} combined with a gluing argument. Firstly, we need to make sure that one can choose a source away from $\overline\uu$.

\begin{lemma}\label{lem_exterior_source}
There is $\chi$ satisfying Assumption~\ref{assumption_sources} with $\supp\chi\cap\overline\uu=\emptyset$.
\end{lemma}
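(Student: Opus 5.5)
We will put the source in a thin neighbourhood of the vertex curve $\gamma$, on the near side of every null generator that reaches $K$. Since $\overline\uu\cap\gamma(I)=\emptyset$ and $\overline\uu$ is compact, there is an open neighbourhood $\mathcal N$ of the compact arc $\gamma([\tau_a-\eta,\tau_b])$ (for small $\eta>0$, with $[\tau_a-\eta,\tau_b]\subset I$) with $\overline{\mathcal N}\cap\overline\uu=\emptyset$. For $x\in K$ we have $f(x)\in[\tau_a,\tau_b]$. Let $\lambda_x$ be the generator segment from $\gamma(f(x))$ to $x$. Since this segment leaves every neighbourhood of its vertex, there is an initial piece of $\lambda_x$ lying in $\mathcal N\setminus\gamma(I)$. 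We then choose $\chi\in C_c^\infty(\dg;[0,1])$ supported in $\mathcal N$ that is positive on some point of each such initial piece.

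To build $\chi$ concretely, we use the structure of the cones near their vertices. For each $s$, the null generators of $\mathcal H_s$ are the null geodesics $\sigma\mapsto\exp_{\gamma(s)}(\sigma\xi)$ with $\xi$ in the future null cone. We normalize $\xi$ by $g(\xi,\dot\gamma(s))=-1$, which makes the set of directions a compact sphere bundle $S$ over $[\tau_a-\eta,\tau_b]$. Fix a small $r>0$ such that $\exp_{\gamma(s)}$ is a diffeomorphism near $0$ uniformly in $s$. Then the set
\[
\Sigma=\{\exp_{\gamma(s)}(r\xi): (s,\xi)\in S\}
\]
is a compact subset of $\mathcal N\setminus\gamma(I)$, provided $r$ is small enough. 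By uniqueness of geodesics, the generator through such a point is the one emanating from $\gamma(s)$. Hence each generator segment of length greater than $r$ (in this affine normalization) meets $\Sigma$. We take $\chi=1$ on $\Sigma$, with support in a small neighbourhood of $\Sigma$ inside $\mathcal N\setminus\gamma(I)$. This neighbourhood is relatively compact in $\dg$, so $\chi\in C_c^\infty(\dg)$.

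The remaining point is to show that each $\lambda_x$ with $x\in K$ actually extends past affine parameter $r$, that is, that $x$ is not hit before the generator reaches $\Sigma$. We argue as follows. The points $\exp_{\gamma(s)}(\sigma\xi)$ with $\sigma\le r$ lie in $\mathcal N$, which is disjoint from $K$. The affine parametrization of $\lambda_x$ coincides, up to the positive factor relating $L$ to $\exp$, with this normalized parametrization, since both are affine parametrizations of the same null geodesic. Therefore the parameter of $x$ exceeds $r$, and $\lambda_x$ passes through $\Sigma\subset\{\chi>0\}$.

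The main obstacle is the identification between the generators of the leaves $\mathcal H_s$ (integral curves of $L$) and the radial null geodesics from $\gamma(s)$, uniformly in $s$. This identification is asserted in Section~\ref{sec_progressive_wave}. We will invoke it together with the fact that near each vertex $\partial J^+(\gamma(s))$ is the image of the null cone under $\exp_{\gamma(s)}$, which is standard in normal neighbourhoods. Uniformity in $s$ then follows from compactness of $[\tau_a-\eta,\tau_b]$ and smooth dependence of $\exp$ on the base point.
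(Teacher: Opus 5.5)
Your argument is correct, but it takes a different route from the paper. The paper never works at the vertices. It flows each $x\in K$ backward along $L$ inside $\dg$ to a point $y_x=\Phi_{s_x}(x)\in\mathcal N\setminus\gamma(I)$ and places a bump $b_x$ there. By continuity of the flow, points $x'$ in a neighbourhood $O_x$ of $x$ still satisfy $b_x(\Phi_{s_x}(x'))>0$, and the paper sums the bumps over a finite subcover of $K$.

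You instead move the compactness from $K$ to the bundle $S$ of normalized null directions along the vertex arc. You build one compact "small null sphere" $\Sigma$ at affine distance $r$ from the vertices, and show that every generator segment ending outside the $r$-tube, in particular in $K$, must cross it. This gives a single explicit $\chi$ that works at once for every $x\in\dg\setminus\mathcal N$ with $f(x)\in[\tau_a,\tau_b]$, with no covering argument.

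The price is extra input at the vertex, where $f$ and $L$ are not defined. First, you must extend each $L$-generator through $\gamma(f(x))$ as a geodesic and compare its affine parameter with your normalized one. Second, you must check that $\Sigma\cap\gamma(I)=\emptyset$ uniformly in $s$. This is not just a matter of taking $r$ small. It uses three facts:
\begin{enumerate}
\item short null radial geodesics from $\gamma(s)$ stay on $\partial J^+(\gamma(s))$, which needs causally convex normal neighbourhoods, i.e. strong causality;
\item $\gamma(s')\in I^+(\gamma(s))$ for $s'>s$;
\item $\gamma(s')\notin J^+(\gamma(s))$ for $s'<s$.
\end{enumerate}
You correctly flag this as the main point, and it is standard. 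The paper's flow argument sidesteps it entirely, because $\Phi_s$ never leaves $\dg$.

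Two minor remarks: the margin $\eta$ is unnecessary, and your claim that the generator through a point of $\Sigma$ is the one emanating from $\gamma(s)$ is not used in the conclusion.
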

\begin{proof}
Let $\Phi_s$ be the flow of $L$ on $\dg$. Recall that the generator through $x\in\dg$ emanates from $\gamma(f(x))$. So $\Phi_s(x)$ tends to $\gamma(f(x))$ as $s$ decreases. Let $\mathcal N$ be an open neighbourhood of $\gamma(I)$ with $\overline{\mathcal N}\cap\overline\uu=\emptyset$. For $x\in K$ there is $s_x<0$ with $y_x:=\Phi_{s_x}(x)\in\mathcal N\setminus\gamma(I)$. Choose $b_x\in C^\infty_c(\mathcal N\setminus\gamma(I))$, $b_x\ge0$, with $b_x(y_x)>0$. By continuity of the flow, $b_x(\Phi_{s_x}(x'))>0$ for $x'$ in a neighbourhood $O_x$ of $x$. Cover $K$ by $O_{x_1},\dots,O_{x_m}$ and set $\chi=\sum_ib_{x_i}$.
\end{proof}

Theorem~\ref{thm_main} uses the traces on $\partial\uu$ of the waves $V_{q_1,\tau}$ and $V_{q_2,\tau}$, sent from the same source outside $\overline\uu$. The inclusion $\mathcal C_{q_1}(\uu)\subset\mathcal C_{q_2}(\uu)$ is only a statement about sets. For the wave $U_1$ sent from a smooth source $F$ outside $\overline\uu$, it gives some solution $u_2$ of $(\Box_g+q_2)u_2=0$ in $\uu$ with the same Cauchy data as $U_1$, but it does not say that $u_2$ is the restriction of the wave $U_2$ sent from $F$. The next lemma shows that it is. The idea is to glue $u_2$ inside $\uu$ to $U_1$ outside. Since $q_1=q_2$ outside $\uu$, and both the Dirichlet and the Neumann data match on $\partial\uu$, no source is created on $\partial\uu$, and the glued function is $U_2$ by uniqueness. In other words, the Cauchy data sets determine the waves sent from outside $\overline\uu$.

\begin{lemma}\label{lem_gluing}
Let $q_1,q_2\in C^\infty(\M)$, $\supp q_j\subset K$, with $\mathcal C_{q_1}(\uu)\subset\mathcal C_{q_2}(\uu)$. Let $F\in\mathcal E'(\diam)$ with $\supp F\cap\overline\uu=\emptyset$, and let $U_j\in\mathcal D'(\diam)$ solve $(\Box_g+q_j)U_j=F$ in $\diam$ with $\supp U_j\subset J^+(\supp F)$. Then $U_1=U_2$ in $\diam\setminus\overline\uu$.
\end{lemma}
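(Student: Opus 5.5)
The plan follows the gluing idea sketched before the statement. Since $\supp F\cap\overline\uu=\emptyset$ and $\supp F$ is compact, $F$ vanishes on a neighbourhood $\mathcal O$ of $\overline\uu$. Both $U_j$ solve $(\Box_g+q_j)U_j=0$ there, and $q_j$ is smooth. The first step is to check that $U_1$ is smooth near $\overline\uu$, so that its Cauchy data lie in the set \eqref{eq_cds}. If $F$ is smooth this is immediate from well-posedness. For general $F\in\mathcal E'(\diam)$ I would first treat smooth $F$ and then argue by density. Alternatively, one can observe that the statement is only needed for sources like $\chi\delta'(\varphi_\tau)$, for which Proposition~\ref{prop_prog_wave} gives smoothness away from $\mathcal H_\tau$. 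Near $\mathcal H_\tau\cap\overline\uu$ one can work with $v$ directly. I expect this regularity issue to be the main technical obstacle; the gluing itself is clean.

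Assume now that $U_1|_{\overline\uu}\in C^\infty(\overline\uu)$. Then $(U_1|_{\partial\uu},\partial_\nu U_1|_{\partial\uu})\in\mathcal C_{q_1}(\uu)\subset\mathcal C_{q_2}(\uu)$. Hence there is $u_2\in C^\infty(\overline\uu)$ with $(\Box_g+q_2)u_2=0$ in $\uu$ and the same Cauchy data as $U_1$ on $\partial\uu$. Define
\[
\widetilde U=\begin{cases} u_2 & \text{in }\uu,\\ U_1 & \text{in }\diam\setminus\uu.\end{cases}
\]
The next step is to compute $(\Box_g+q_2)\widetilde U$ in $\mathcal D'(\diam)$ by pairing with a test function $\phi$. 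I would split the integral over $\uu$ and $\diam\setminus\overline\uu$ and apply Green's formula on each side, near $\partial\uu$ where both functions are smooth. The boundary terms involve the jumps of $\widetilde U$ and of its conormal derivative $g(\nabla\widetilde U,\cdot)$ across $\partial\uu$. Since all derivatives of $u_2$ and $U_1$ along $\partial\uu$, together with the normal derivative, agree, the full gradients agree on $\partial\uu$ and both jumps vanish. (The normal here is taken with respect to $h$, but coincidence of the whole gradient makes the choice irrelevant.) Since $q_1=q_2$ outside $K\subset\uu$, we get $(\Box_g+q_2)\widetilde U=F$ in $\diam$.

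It remains to check the support condition and invoke uniqueness. Outside $\overline\uu$ we have $\supp\widetilde U\subset\supp U_1\subset J^+(\supp F)$, but inside $\uu$ the function $u_2$ is an arbitrary element of the Cauchy data set and need not vanish in the past. To handle this, I would use global hyperbolicity (Remark~\ref{rem_D_globhyp}) and pick a Cauchy temporal function $t_\diam$ and a Cauchy surface $\Sigma=\{t_\diam=t_0\}$ lying in the past of $\supp F\cup\overline\uu$. Near $\Sigma$ we have $\widetilde U=U_1=0$, since $\overline\uu$ stays away from that region. Then $\widetilde U-U_2$ solves $(\Box_g+q_2)(\widetilde U-U_2)=0$ with vanishing Cauchy data on $\Sigma$. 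By uniqueness for the Cauchy problem (Proposition~\ref{prop_wave_lorentz}), $\widetilde U=U_2$ in $\diam$. In particular $U_1=\widetilde U=U_2$ in $\diam\setminus\overline\uu$, and as a byproduct $u_2=U_2|_{\uu}$.
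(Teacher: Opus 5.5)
Your proposal is correct and follows the paper's proof. You glue $u_2$ inside to $U_1$ outside for smooth sources $F\in C^\infty_c(\diam\setminus\overline\uu)$, and the matching Cauchy data mean that no source appears on $\partial\uu$. You then get $\widetilde U=U_2$ by uniqueness for the Cauchy problem, using a Cauchy surface in the past of $\supp F\cup\overline\uu$. Finally, you pass to general $F\in\mathcal E'(\diam)$ by approximation and continuity of the solution operator.

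Drop your alternative route for $F=\chi\,\delta'(\varphi_\tau)$, however. $V_\tau$ is singular along $\mathcal H_\tau\cap\uu$, where $\alpha\ge\delta_\alpha>0$ on $K$. Also, $v$ does not solve the equation in $\{f<\tau\}\cap\uu$. So neither function gives an element of $\mathcal C_{q_1}(\uu)$, and this is exactly why the density argument is needed.
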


\begin{proof}
Assume first that $F\in C^\infty_c(\diam\setminus\overline\uu)$. Then $U_1\in C^\infty(\diam)$ and $(\Box_g+q_1)U_1=0$ in $\uu$. By assumption, there is $u_2\in C^\infty(\overline\uu)$ with $(\Box_g+q_2)u_2=0$ in $\uu$ and the same Cauchy data as $U_1$, that is, $u_2=U_1$ and $du_2=dU_1$ on $\partial\uu$. Let
\[
\tilde U=\begin{cases}u_2&\text{in }\overline\uu,\\ U_1&\text{in }\diam\setminus\overline\uu.\end{cases}
\]
Then $\tilde U\in C^1(\diam)$, and since $q_1=q_2$ outside $K$, Green's formula on both sides of $\partial\uu$ gives
\[
(\Box_g+q_2)\tilde U=F\qquad\text{in }\diam .
\]
So $\tilde U-U_2$ solves the homogeneous equation and is supported in $\overline\uu\cup J^+(\supp F)$. Since a Cauchy temporal function $t_\diam$ of $\diam$ increases along future-directed causal curves, $\tilde U-U_2$ vanishes in $\{t_\diam<\min_{\overline\uu\cup\supp F}t_\diam\}$. Hence $\tilde U=U_2$ by uniqueness of the Cauchy problem and $U_1=U_2$ in $\diam\setminus\overline\uu$.

The case $F \in \mathcal E'(\diam)$ follows by approximation by functions in $C^\infty_c(\diam\setminus\overline\uu)$ and by using the continuity of the solution operator.
\end{proof}

\begin{figure}[!ht]
\centering
\begin{tikzpicture}[scale=1.15]
\fill[green!10] (0,0) -- (3.6,3.6) -- (-2.4,3.6) -- (-2.4,2.4) -- cycle;
\path[name path=ellA] (2.25,1.55) ellipse[x radius=1.3, y radius=0.85];
\fill[gray!20] (2.25,1.55) ellipse[x radius=1.3, y radius=0.85];
\draw[black!60] (2.25,1.55) ellipse[x radius=1.3, y radius=0.85];
\begin{scope}
  \clip (0,0) -- (4,4) -- (-3,4) -- (-3,3) -- cycle;
  \draw[blue!70!black, line width=1.8pt] (2.25,1.55) ellipse[x radius=1.3, y radius=0.85];
\end{scope}
\path[name path=rayA] (0,0) -- (3.6,3.6);
\draw[red!75!black, thick] (-2.4,2.4) -- (0,0) -- (3.6,3.6);
\fill[name intersections={of=ellA and rayA, name=iA}]
  (iA-1) circle (2pt) (iA-2) circle (2pt);
\fill[blue!70!black] (0,0) circle (1.8pt);
\node[blue!70!black, below] at (0,0) {$\gamma(\tau)$};
\node[red!75!black, right] at (3.5,3.35) {$\mathcal H_\tau$};
\node at (-1.0,3.25) {$Q_\tau$};
\node[green!40!black] at (-0.15,1.75) {$w_\tau=0$};
\node at (2.75,1.15) {$\uu$};
\node[blue!70!black] at (1.3,2.6) {$\Gamma_\tau$};
\end{tikzpicture}
\caption{End of the proof of Theorem~\ref{thm_calderon}, for $\tau\in\mathcal R$. The waves agree outside $\overline\uu$, so $w_\tau$ and $dw_\tau$ vanish on the shaded set $Q_\tau\setminus\overline\uu$, hence on $\Gamma_\tau$.}
\label{fig:gamma_density}
\end{figure}

We can now conclude the proof of Theorem~\ref{thm_calderon}.

\begin{proof}[Proof of Theorem~\ref{thm_calderon}]
Let $K=\supp q_1\cup\supp q_2$, a compact subset of $\uu$, and extend $q_1,q_2$ by zero to $\M$. Choose $\chi$ as in Lemma~\ref{lem_exterior_source}, and let $\tau\in\mathcal R$. Lemma~\ref{lem_gluing} with $F=\chi\,\delta'(\varphi_\tau)$ gives $V_{q_1,\tau}=V_{q_2,\tau}$ in $\diam\setminus\overline\uu$. Since $V_{q_j,\tau}=v_{j,\tau}$ on $Q_\tau$ by \eqref{eq:main},
\[
w_\tau=0\qquad\text{on }Q_\tau\setminus\overline\uu,
\]
and, by continuity, $w_\tau$ and $dw_\tau$ vanish on $\Gamma_\tau$ (Figure~\ref{fig:gamma_density}), which means $E_\tau=0$. This holds for every $\tau\in\mathcal R$, so $\mathcal E=0$, and Theorem~\ref{thm_main} gives $q_1=q_2$.
\end{proof}

\appendix

\section{Facts from Lorentzian Geometry}\label{ap_lorentz}

For an introduction to Lorentzian geometry we refer to~\cite{oneill1983, ringstrom2009}. We consider $(\M, g)$ a connected Lorentzian manifold of dimension $1+n$ with signature $(-,+,...,+)$. A vector $v \in T_p \M \backslash \{0\}, \: p \in \M$, is timelike, spacelike or null if
\begin{equation}
    g(v,v)<0,\quad g(v,v)>0, \quad g(v,v)=0,
\end{equation}
respectively. We say that $v \in T_p \M \backslash \{0\} $ is a causal vector if it is timelike or null, that is $g(v,v) \leq 0$. The manifold $(\M, g)$ will be assumed to be time oriented. That means that there exists a vector field $T$ such that $T(p)$ is timelike for all $p \in \M$. This implies that for a causal vector $v$ one has $g(v,T)<0$ or $g(v,T)>0$. Then $v$ is said to be future-directed if $g(v,T)<0$ and past-directed if the opposite inequality holds.

 A submanifold $S$ of $\M$ is called spacelike if all the non-zero tangent vectors of $S$ are spacelike. A smooth curve $\gamma$ is timelike if its tangent vector $\dot \gamma(s)$ is timelike for all $s \in [a,b]$ with analogous definitions for spacelike, null, causal and future/past directed.

Given $p,q \in \M$ we can define the~\textit{causality relations} on $\M$ as follows.
\begin{enumerate}
    \item  We write $p \leq q$ if $p=q$ or there is a future-directed \textit{causal} curve from $p$ to $q$.

    \item We write $p \ll q$ if there is a future-directed \textit{timelike} curve from $p$ to $q$.
\end{enumerate}

This allows to define the~\textit{causal future} of a subset $A \subset \M$ as the set
\begin{equation}
    J^+(A)=\{q \in \M \textnormal{: there is $p$} \in A \textnormal{ such that } p\leq q \},
\end{equation}
as well as the~\textit{chronological future} of $A \subset \M$ as
\begin{equation}
    I^+(A)=\{q \in \M \textnormal{: there is $p$} \in A \textnormal{ such that } p\ll q \}.
\end{equation}

 We have analogous definitions for the~\textit{causal past} and~\textit{chronological past} $J^-(A)$ and $I^{-}(A)$ of $A$ by reversing the sign of the inequalities above. We write $J^{\pm}(p)= J^{\pm}(\{p\})$ and $I^{\pm}(p)= I^{\pm}(\{p\})$ for points $p \in \M$. The sets $I^{\pm}(A)$ are always open in $\M$. However, $J^{\pm}(A)$ need not be closed.

 We now define global hyperbolicity.
\begin{definition}
    The manifold $(\M, g)$ is~\textit{globally hyperbolic} if
    \begin{enumerate}
        \item \label{point_one} there is no closed causal curve,

        \item \label{point_two} the causal diamonds $J^+(p) \cap J^-(q)$ are compact sets for all $p, q \in \M$.
    \end{enumerate}
\end{definition}

We refer to~\cite{hounnonkpe2019} for this relatively recent definition. If $n \geq 2$ and $\M$ is non-compact then~\eqref{point_one} can be omitted from the definition above~\cite{hounnonkpe2019}. On the other hand, globally hyperbolic manifolds are non-compact. Global hyperbolicity has far reaching consequences, which are summarized in the next theorem. A hypersurface $\Sigma \subset \M$ is called a~\textit{Cauchy surface} if every inextendible timelike curve intersects $\Sigma$ exactly once.

\begin{theorem}
\label{prop_of_globhyper}
    The following are equivalent for $(\M, g)$.

\begin{enumerate}
    \item $(\M, g)$ is globally hyperbolic.

    \item There exists a smooth spacelike Cauchy surface in $\M$.

 \item There exists a Cauchy temporal function, that is, a smooth function $t\colon\M\to\R$ whose gradient $\nabla t$ is timelike and past-directed, and whose level sets $\Sigma_s=t^{-1}(s)$ are smooth spacelike Cauchy surfaces. The function $t$ can be taken surjective, and its restriction to any inextendible future-directed causal curve is strictly increasing and surjective onto $\R$. Moreover, $\M$ is diffeomorphic to $\R\times\Sigma_0$ via $F\colon\R\times\Sigma_0\to\M$ with
\begin{equation}
    (F^*g)(s,x)=c(s,x)\bigl(-ds^2+h_s(x)\bigr),\qquad t\bigl(F(s,x)\bigr)=s,
\end{equation}
where $c>0$ is smooth and $h_s$ is a Riemannian metric on $\Sigma_0$ depending smoothly on $s$.
\end{enumerate}
\end{theorem}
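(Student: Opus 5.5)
This theorem is classical, and my plan is to assemble it from the standard literature rather than re-derive it, following the cycle $(3)\Rightarrow(2)\Rightarrow(1)\Rightarrow(3)$. For $(3)\Rightarrow(2)$ I take $\Sigma_0=t^{-1}(0)$. Each level set is a smooth spacelike hypersurface, since $\nabla t$ is timelike and normal to it. An inextendible timelike curve is in particular causal, so $t$ restricted to it is strictly increasing and surjective onto $\R$. Hence the curve meets $\Sigma_0$ exactly once, and $\Sigma_0$ is a smooth spacelike Cauchy surface.

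For $(2)\Rightarrow(1)$ I follow the classical argument of \cite{oneill1983}.
\begin{enumerate}
\item Show that $\M=D(\Sigma)$, the domain of dependence of the Cauchy surface $\Sigma$.
\item Deduce strong causality, and in particular the absence of closed causal curves. A closed causal curve could be perturbed and traversed repeatedly to give an inextendible timelike curve meeting $\Sigma$ infinitely often or not at all.
\item Prove compactness of $J^+(p)\cap J^-(q)$ by a limit curve argument. Given a sequence of points in the diamond, pick causal curves from $p$ to $q$ through them and extract a limit causal curve. The limit cannot be inextendible, because it would then have to cross $\Sigma$ while being trapped between $p$ and $q$. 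Hence the points accumulate in the diamond.
\end{enumerate}
Together with the definition recalled from \cite{hounnonkpe2019}, this gives global hyperbolicity.

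For $(1)\Rightarrow(3)$ I would proceed in two stages.
\begin{enumerate}
\item \emph{Geroch's time function.} Fix a finite Borel measure $m$ with smooth positive density and set $T(p)=\log\bigl(m(J^-(p))/m(J^+(p))\bigr)$. Global hyperbolicity (compact diamonds, causal simplicity) gives continuity of $T$. Strict monotonicity along causal curves follows from the push-up property \eqref{eq:pushup}. Along any inextendible causal curve one has $m(J^\mp)\to0$ at the respective ends, so $T$ is onto $\R$ and its level sets are topological Cauchy surfaces.
\item \emph{Smoothing.} Replace $T$ by a smooth Cauchy temporal function whose gradient is timelike and past-directed. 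This is the Bernal--Sánchez theorem, and I would cite it. The idea is to build, locally between two Cauchy slices, smooth functions with timelike gradient, and glue them with a partition of unity subordinate to the slabs $\{a<T<b\}$. One keeps the past-directed timelike cone property, which is preserved under positive combinations, and then reparametrizes so that the new function is surjective.
\end{enumerate}

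Finally, for the splitting I take the vector field $X=-\nabla t/|g(\nabla t,\nabla t)|$, which is future-directed timelike and satisfies $dt(X)=1$. I show that $X$ is complete: an integral curve is an inextendible causal curve, so $t$ along it covers $\R$, and $t$ increases at unit rate. The flow map $F(s,x)=\Phi_s(x)$, $x\in\Sigma_0$, then satisfies $t(F(s,x))=s$. It is a diffeomorphism because every point lies on exactly one integral curve, which meets $\Sigma_0$ once. Since $X$ is $g$-orthogonal to the level sets, the pulled-back metric is $-|g(\nabla t,\nabla t)|^{-1}ds^2+g_s$. Factoring out $c=|g(\nabla t,\nabla t)|^{-1}$ gives the stated form with $h_s=c^{-1}g_s$.

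The main obstacle is the smoothing step in $(1)\Rightarrow(3)$. Geroch's function is only continuous, and producing a smooth function with everywhere timelike gradient while keeping the Cauchy property is genuinely delicate. Here I would rely on Bernal--Sánchez rather than reprove it.
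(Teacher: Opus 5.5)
Your proposal is correct and matches what the paper does: the paper gives no argument of its own and simply cites Bernal--S\'anchez, in the formulation of Oksanen et al. Like the paper, you delegate the one delicate step, smoothing to a Cauchy temporal function, to that theorem, and your derivation of the splitting from the complete flow of $X=-\nabla t/|g(\nabla t,\nabla t)|$ is a correct self-contained addition.

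Two of your side heuristics are not literally right and should not be presented as the mechanism. First, a closed null geodesic need not deform into a closed timelike curve; the classical argument obtains causality from $D(\Sigma)=\M$ rather than by such a perturbation. Second, gluing local temporal functions with a partition of unity does not preserve timelike gradients, because of the terms $t_i\nabla\rho_i$; Bernal--S\'anchez instead add up functions whose gradients are past-directed causal. Since you rely on the cited results for exactly these steps, this does not affect correctness.
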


This equivalence has been established in~\cite{bernal2003, bernal2005}. See also the references in~\cite[Proposition 2.3]{oksanen2024rigiditylorentziancalderonproblem} from which comes the formulation in Theorem~\ref{prop_of_globhyper}.

\medskip

Globally hyperbolic manifolds provide with a general geometric setting in which the Cauchy problem for the wave equation is well posed. This is the content of the following proposition.
\begin{proposition}
\label{prop_wave_lorentz}
  Let $(\M, g)$ be globally hyperbolic and $P$ a normally hyperbolic operator. Then for any $F \in \mathcal{E}^\prime(\M)$ there exists a unique $u \in \mathcal{D}^\prime (\M)$ such that
 \begin{equation}
\begin{cases}
    P u=F  \quad \textnormal{in } \M,\\
    \supp (u) \subset J^+(\supp (F)).
    \end{cases}
\end{equation}
\end{proposition}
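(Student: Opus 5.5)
We prove uniqueness by duality and existence by transposing a Green operator of the formal adjoint; all analytic input comes from the well-posed smooth Cauchy problem along the Cauchy temporal function $t$ of Theorem~\ref{prop_of_globhyper}. We use throughout that, by global hyperbolicity, $J^\pm(A)$ is closed for compact $A$, and $J^+(A)\cap J^-(B)$ is compact for compact $A,B$.

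\textbf{Step 1: the smooth Cauchy problem.} Use $F\colon\R\times\Sigma_0\to\M$ from Theorem~\ref{prop_of_globhyper}, so that the principal part of $P$ is conformal to $\partial_s^2-\Delta_{h_s}$. For $f\in C^\infty_c(\M)$ and smooth data on a level set $\Sigma_{s_0}$ we solve $Pu=f$ on slabs $\{s_0\le t\le s_1\}$. The tool is the standard energy estimate on regions $J^-(p)\cap\{t\ge s_0\}$, obtained by integrating the divergence of the energy current contracted with $\nabla t$. It gives finite speed of propagation: $u$ vanishes outside $J^+(\supp f)\cup J^\pm(\supp(\text{data}))$. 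Existence on compact domains of dependence follows from Galerkin or local Sobolev theory together with these estimates, and higher regularity comes from commuting with derivatives. Patching by uniqueness yields a global smooth solution. Applying this to the formal adjoint $P^*$, which is again normally hyperbolic, with zero data far in the future, gives an operator
\[
G\colon C^\infty_c(\M)\to C^\infty(\M),\qquad P^*G\phi=\phi,\qquad \supp G\phi\subset J^-(\supp\phi).
\]
The energy estimates show that $G$ is continuous: for every compact $K_0$ and every $m$, the $C^m(K_0)$ norm of $G\phi$ is bounded by a $C^{m'}$ norm of $\phi$, uniformly for $\phi$ supported in a fixed compact set.

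\textbf{Step 2: existence.} For $F\in\mathcal E'(\M)$, fix a cut-off $\rho\in C^\infty_c(\M)$ equal to $1$ near $\supp F$, and set
\[
u(\phi):=F(\rho\,G\phi).
\]
By the continuity of $G$ from Step 1, $u\in\mathcal D'(\M)$. Since $P^*G\phi=\phi$, we get $(Pu)(\phi)=u(P^*\phi)=F(\rho\,GP^*\phi)$. The function $GP^*\phi-\phi$ solves the homogeneous adjoint equation and is supported in $J^-(\supp\phi)$, so it vanishes by Step 1. Hence $(Pu)(\phi)=F(\phi)$.

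For the support property, suppose $\supp\phi\cap J^+(\supp F)=\emptyset$. Then $J^-(\supp\phi)\cap\supp F=\emptyset$. Since $J^-(\supp\phi)$ is closed, it also misses a neighbourhood of the compact set $\supp F$, so $G\phi$ vanishes near $\supp F$ and $u(\phi)=0$. Thus $\supp u\subset J^+(\supp F)$.

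\textbf{Step 3: uniqueness.} Let $u\in\mathcal D'$ with $Pu=0$ and $\supp u\subset J^+(K)$ for some compact $K$. For a test function $\phi$, the set $J^+(K)\cap J^-(\supp\phi)$ is compact, and it contains $\supp u\cap\supp G\phi$. Choose $\rho\in C^\infty_c$ equal to $1$ near this set. Then
\[
u(\phi)=u(P^*(\rho G\phi))=(Pu)(\rho G\phi)=0,
\]
where the first equality holds because $P^*(\rho G\phi)-\phi$ vanishes near $\supp u$. Hence $u=0$.

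The main obstacle is Step 1: the global construction and continuity of $G$ with the sharp support property $J^-(\supp\phi)$. One must verify that the energy estimates are uniform on compact sets and that the domain of dependence is exactly the causal past. Both rest on the normal hyperbolicity of $P$ and on the compactness of causal diamonds. If a reference may be invoked instead, this is exactly the statement that normally hyperbolic operators on globally hyperbolic manifolds admit advanced and retarded Green operators (B\"ar--Ginoux--Pf\"affle), and Steps 2--3 then follow from it by duality as above.
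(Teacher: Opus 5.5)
Your proposal is correct. The paper does not prove the proposition itself: it quotes \cite[Theorem~3.3.1]{bar2007} for existence and \cite[Theorem~3.1.1]{bar2007} for uniqueness.

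Your operator $G$ is exactly $\phi\mapsto\bigl(x\mapsto F_+(x)[\phi]\bigr)$, built from the global fundamental solutions of Theorem~3.3.1 there. That map is smooth in $x$, satisfies $P^*G\phi=\phi$, and has support in $J^-(\supp\phi)$. So your Step~2 is the duality argument left implicit when that theorem is invoked for a source $F\in\mathcal E'$ rather than a point source.

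Where you genuinely differ is uniqueness. The cited Theorem~3.1.1 covers arbitrary distributional solutions with past compact support. It is proved from the local theory alone, using fundamental solutions on small causal domains, before global solvability is available. Your Step~3 instead gets uniqueness in one line from the global Green operator of $P^*$. That is more economical, and your key claim checks out. On the open neighbourhood of $J^+(K)\cap J^-(\supp\phi)$ on which $\rho=1$, you have $P^*(\rho G\phi)=\phi$. On the open set $\mathcal M\setminus J^-(\supp\phi)$, both $\rho G\phi$ and $\phi$ vanish. These two open sets cover $J^+(K)\supset\supp u$.

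The price is that both halves now rest on Step~1. You build it from energy estimates instead of the local fundamental solutions (Riesz distributions, Hadamard coefficients) used in the reference. That is a standard and legitimate alternative, but as written it is only a sketch, and it is precisely the part the paper outsources to the literature.
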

For the existence part of the above proposition we refer to~\cite[Theorem 3.3.1]{bar2007} and the uniqueness follows from~\cite[Theorem 3.1.1]{bar2007}. See as well~\cite[Theorem 6.3.1]{friedlander1975book}. An operator $P: C^\infty(\M) \to C^\infty(\M) $ with $\M$ a \textit{Lorentzian manifold} is called normally hyperbolic if its principal symbol is given by the metric
\begin{equation}
    \sigma_P(\xi)=g^{-1}(\xi,\xi).
\end{equation}
In particular $P=\bg+q$ with $q \in C^\infty(\M)$ is normally hyperbolic.

\subsection{Auxiliary Riemannian metric}

\begin{lemma}
\label{lem_aux_riem_metric}
Let $(\M,g)$ be a Lorentzian manifold and let $t : \M \to \mathbb{R}$ be a smooth time function such that $\nabla t$ is everywhere timelike.
Define the symmetric $2$-tensor
\[
h(u,v) = g(u,v) + 2\, g(u, T) g(v, T),
\quad \text{where } T = \frac{\nabla t}{\sqrt{-g(\nabla t,\nabla t)}}.
\]
Then $h$ is a Riemannian metric on $\M$. Moreover, the density function associated to $h$ is the same as the density of $g$, that is $d V_h=d V_g=\sqrt{|\det(g_{ij})|}\, dx^0 \wedge \cdots \wedge dx^n$.
\end{lemma}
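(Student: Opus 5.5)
We must show that $h$ is positive definite and that $dV_h=dV_g$. Both are pointwise statements, so we fix $p\in\M$ and work in a $g$-orthonormal basis of $T_p\M$ adapted to $T$.

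\textbf{Normalization of $T$.} Since $\nabla t$ is timelike, $-g(\nabla t,\nabla t)>0$. Hence $T$ is a smooth timelike vector field with $g(T,T)=-1$, and $h$ is a smooth symmetric $2$-tensor.

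\textbf{Adapted basis.} The orthogonal complement $T^{\perp_g}$ is spacelike, because a vector orthogonal to a timelike vector is spacelike or zero. So there is a $g$-orthonormal basis $e_0=T,e_1,\dots,e_n$ of $T_p\M$ with $g(e_i,e_i)=1$ for $i\ge1$. Write $u=u^0T+u'$ with $u'\in T^{\perp_g}$. Then $g(u,T)=-u^0$, and therefore
\[
h(u,u)=-(u^0)^2+g(u',u')+2(u^0)^2=(u^0)^2+g(u',u').
\]
This is strictly positive for $u\ne0$, so $h$ is Riemannian. Equivalently, in this basis the matrix of $h$ is the identity: $h(T,T)=-1+2=1$, $h(T,e_i)=0$, and $h(e_i,e_j)=\delta_{ij}$ for $i,j\ge1$.

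\textbf{Densities.} In the same basis, $g=\mathrm{diag}(-1,1,\dots,1)$ and $h=\mathrm{id}$, so $|\det g|=1=\det h$. For an arbitrary coordinate basis $\partial_{x^i}$, let $A$ be the change-of-basis matrix. Then $g_{ij}=(A^{\mathsf T}\eta A)_{ij}$ and $h_{ij}=(A^{\mathsf T}A)_{ij}$. It follows that
\[
|\det(g_{ij})|=(\det A)^2=\det(h_{ij}),
\]
so $\sqrt{|\det g|}=\sqrt{\det h}$ in every chart, which gives $dV_h=dV_g$.

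There is no real obstacle in this argument. The only point needing care is that $T^{\perp_g}$ is spacelike, which is the standard fact that the $g$-orthogonal complement of a timelike vector is spacelike.
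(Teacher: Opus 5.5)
Your proof is correct and follows the paper's argument: both use a $g$-orthonormal frame $\{T,e_1,\dots,e_n\}$ at each point, in which $g=\mathrm{diag}(-1,1,\dots,1)$ and the matrix of $h$ is the identity. Your change-of-basis computation $|\det(g_{ij})|=(\det A)^2=\det(h_{ij})$ spells out the equality of densities, which the paper only asserts by saying that both properties are independent of the choice of frame.
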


\begin{proof}
Let $p \in \M$. Since $T$ is timelike we have a direct sum
\[
T_p \M = \mathrm{span}\{T\} \oplus  T^\perp.
\]
We can now choose a $g$-orthonormal frame $\{T, e_1, \dots, e_n\}$ at $p$, so that the matrix of $g$ in this frame is
\[
(g_{ij}) = \mathrm{diag}(-1,1,\dots,1).
\]
We have $g(e_i,T)=0$ for $i \ge 1$ and
\[
h(T,T) = g(T,T) + 2 g(T,T)^2 = -1 + 2 = 1,
\]
\[
h(T,e_i) = 0,
\quad
h(e_i,e_j) = g(e_i,e_j) = \delta_{ij}.
\]
Thus the matrix of $h$ in this frame is
\[
(h_{ij}) = \mathrm{diag}(1,1,\dots,1).
\]
so $h$ is positive definite and therefore a Riemannian metric.
This proves the lemma, since both properties are independent of the choice of frame.
\end{proof}

\subsection{Static and null-cone foliation}

\begin{lemma}[Null-cone foliation in the ultrastatic case]
\label{lem:ultrastatic-foliation}
Let $(N,h)$ be a complete Riemannian manifold and $x_0\in N$ a point
at which $\exp_{x_0}\colon T_{x_0}N\to N$ is a diffeomorphism.
Consider the ultrastatic spacetime $(\mathcal{M},g)=(\mathbb{R}\times N,-dt^2+h)$
and let $\uu\subset\mathcal{M}$ be open with compact closure, such that $x_0\notin\pi_N(\overline\uu)$,
where $\pi_N\colon\mathbb{R}\times N\to N$ is the spatial projection.
Then Assumption~\ref{assumption_null_cone_foliation} is satisfied with
$\gamma(\tau)=(\tau,x_0)$ and $f(t,x)=t-d_h(x_0,x)$.
\end{lemma}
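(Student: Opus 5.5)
The plan is to construct the diamond explicitly and read everything off from the distance function $r(x)=d_h(x_0,x)$. Since $\exp_{x_0}$ is a diffeomorphism, $r$ is smooth on $N\setminus\{x_0\}$ with $|\nabla_h r|_h=1$, and every point $x\neq x_0$ is joined to $x_0$ by a unique minimizing geodesic, namely the radial one. First, by compactness of $\overline\uu$ and since $x_0\notin\pi_N(\overline\uu)$, I will choose $T>0$ with $\overline\uu\subset(-T/2,T/2)\times B_h(x_0,T/2)$. Then I set $p_\pm=(\pm T,x_0)$, $\diam=I^+(p_-)\cap I^-(p_+)$, $\gamma(\tau)=(\tau,x_0)$ and $I=(-T,T)$.

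In an ultrastatic spacetime the causal relations are explicit: $(t,x)\le(t',x')$ iff $t'-t\ge d_h(x,x')$, and $\ll$ iff the inequality is strict. This holds because $h$ is complete: a causal curve projects to a curve of $h$-length at most $t'-t$, and conversely minimizing geodesics lift to null curves. Hence $\diam=\{|t|+r(x)<T\}$, which contains $\overline\uu$ by the choice of $T$. Moreover $\gamma(I)=\diam\cap(\R\times\{x_0\})$ is disjoint from $\overline\uu$.

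Next I identify the cones. We have $J^+(\gamma(\tau))=\{t-\tau\ge r(x)\}$, which is closed, and $I^+(\gamma(\tau))=\{t-\tau>r(x)\}$, so $\partial J^+(\gamma(\tau))=\{t-\tau=r(x)\}$. Removing the vertex gives $\mathcal H_\tau=\{f=\tau\}\cap\diam\setminus\gamma(I)$ with $f(t,x)=t-r(x)$. On $\dg$ one has $x\neq x_0$, so $f$ is smooth there and $df=dt-dr\neq0$. Also $g(\nabla f,\nabla f)=-1+|\nabla_h r|^2=0$, so the level sets are smooth null hypersurfaces.

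It remains to show that $\dg$ is the disjoint union of the $\mathcal H_\tau$ with $\tau\in I$. Disjointness is automatic since they are level sets of one function. For surjectivity, take $(t,x)\in\dg$ and set $\tau=t-r(x)$. Then $\tau<t+r(x)<T$ and $\tau\ge t-r(x)\ge -|t|-r(x)>-T$, so $\tau\in I$. The point $(t,x)$ lies in $\diam$ but not on $\gamma$, so it belongs to $\mathcal H_\tau$. Thus $f:\dg\to I$ is the required function, and $\gamma$ is smooth, future-directed and timelike.

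The only step needing care is the description of the causal relation and of $\partial J^+$. Concretely, one must justify $J^+(\gamma(\tau))=\{t-\tau\ge r\}$, including closedness and the fact that its boundary is exactly the equality set. This uses completeness of $h$ through the Hopf--Rinow theorem, which provides minimizing geodesics. It also uses that $r$ is smooth away from $x_0$, which is where the global diffeomorphism property of $\exp_{x_0}$ enters; without it, cut points would make $\mathcal H_\tau$ nonsmooth. Everything else is direct verification.
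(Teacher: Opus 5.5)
Your proposal is correct and follows essentially the same route as the paper. Both arguments describe $J^+(\gamma(\tau))=\{t-\tau\ge d_h(x_0,x)\}$ by projecting causal curves to $N$ and lifting minimizing geodesics (Hopf--Rinow), and both use that $d_h(x_0,\cdot)$ is smooth off $x_0$ with unit gradient to see that the level sets of $f$ are null; you are slightly more explicit than the paper, which treats the identification of $\partial J^+(\gamma(\tau))$ and the fact that $f$ maps $\dg$ onto $I$ as clear.
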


\begin{proof}
Since $\overline\uu$ is compact, we may choose
$t_-<t_+$ such that $\overline\uu\subset\diam:=I^+(p_-)\cap I^-(p_+)$ with
$p_\pm=(t_\pm,x_0)$.
The diamond can be expressed as
\[
  \diam=\bigl\{(t,x):d_h(x_0,x)<\min(t-t_-,\,t_+-t)\bigr\}.
\]
Define $I=(t_-,t_+)$. Then $\uu\Subset\diam$, and $\overline{\uu}\cap\gamma(I)=\emptyset$ since $x_0\notin\pi_N(\overline\uu)$. We now verify the requirements of Assumption~\ref{assumption_null_cone_foliation} for $f(t,x):=t-d_h(x_0,x)$. Since $\exp_{x_0}$ is a global diffeomorphism $d_h(x_0,\cdot)\in C^\infty(N\setminus\{x_0\})$ and $f\in C^\infty(\dg)$. We also have
\[
  g^{-1}(df,df)
  =-(\partial_t f)^2+|\nabla_h d_h(x_0,\cdot)|_h^2,
  =-1+1=0,
\]
since $|\nabla_h d_h(x_0,\cdot)|_h=1$ on $N\setminus\{x_0\}$. Hence, the level sets of $f$ are null hypersurfaces and they clearly foliate $\dg$.

The description of the null cones emanating from $\gamma$ is a consequence of the expression
\[
J^+(\gamma(\tau))
=
\{(t,x):d_h(x_0,x)\leq t-\tau\}.
\]
For the inclusion ``$\subset$'', let $(t,x)\in J^+(\gamma(\tau))$ and let
$[0,1] \ni s \to \Gamma(s)=(t(s),x(s))$ be a future-directed causal curve from
$(\tau,x_0)$ to $(t,x)$. Then
\[
0\geq g(\dot\Gamma,\dot\Gamma)
=-\dot t^2+|\dot x|_h^2,
\]
and $\dot t>0$, so $|\dot x|_h\leq\dot t$. Hence
\[
d_h(x_0,x) \leq
\int_0^1 |\dot x|_h\,ds
\leq\int_0^1 \dot t\,ds
=t-\tau.
\]
Conversely, suppose that
\[
d_h(x_0,x)\leq t-\tau.
\]
By completeness of $(N,h)$, there exists a unit-speed minimizing
geodesic $\sigma:[0,L]\to N$ from $x_0$ to $x$, where
$L=d_h(x_0,x)$. Let
\[
\Gamma(s)=
\left(\tau+\frac{t-\tau}{L}s,\sigma(s)\right),
\qquad 0\leq s\leq L.
\]
Then $\Gamma$ joins $(\tau,x_0)$ to $(t,x)$ and
\[
g(\dot\Gamma,\dot\Gamma)
=-\left(\frac{t-\tau}{L}\right)^2+1\leq0,
\]
while $\dot t>0$. Thus $\Gamma$ is future-directed causal, and
$(t,x)\in J^+(\gamma(\tau))$.
\end{proof}

\begin{remark}\label{rem_local_ultrastatic}
Lemma~\ref{lem:ultrastatic-foliation} still holds if, for some $R>0$, $\exp_{x_0}$ is only a diffeomorphism from $\{v\in T_{x_0}N:|v|_h<R\}$ onto $B_h(x_0,R)$, provided that
\[
|t-t_c|+d_h(x_0,x)<R\qquad\text{for all }(t,x)\in\overline\uu,
\]
for some $t_c\in\R$. In the proof, one takes $t_\pm=t_c\pm R$, so that $\overline\uu\subset\diam\subset\R\times B_h(x_0,R)$, and one uses that $d_h(x_0,\cdot)\in C^\infty(B_h(x_0,R)\setminus\{x_0\})$, with $|\nabla_h d_h(x_0,\cdot)|_h=1$ there. The rest of the proof is unchanged. For instance, on the round sphere $N=S^n$ one can take $R=\pi$, and the condition reads $|t-t_c|<d_h(x,x_S)$, where $x_S$ is the antipode of $x_0$: the diamond $\diam$ must close before the null cones emanating from $\gamma$ refocus at $x_S$.
\end{remark}

\section{Distributions on manifolds}
\label{ap_distrib_manifold}

We collect here some definitions and facts about distributions. We closely follow~\cite[Chapter 6]{hormander1990}. We start by recalling the notion of pullback of distributions, given by~\cite[Theorem 6.1.2]{hormander1990}. This extends the composition operation in the following sense.

Let $U_j \subset \R^{n_j}, j \in \{1,2\}$ be open sets and $f: U_1 \to U_2$ be a smooth map such that $df(x)$ is surjective for every $x \in U_1$. Then there is a unique linear map $f^*: \mathcal{D}^\prime(U_2) \to \mathcal{D}^\prime(U_1)$ such that $f^*u=u \circ f$ when $u$ is smooth, called the pullback of $u$ by $f$. When defined, the pullback satisfies the usual rules of computation. In particular, the chain rule holds
\begin{equation}
\label{eq_chain_rule}
    \partial_j f^*u=\sum_{k=1}^{n_2} (\partial_j f_k) f^* \partial_k u.
\end{equation}
In the case where $f$ is a diffeomorphism, the pullback is given by
\begin{equation}
\label{eq_pull_diffeo}
    (f^*u, \phi)_{\mathcal{D}^\prime(U_1), C^\infty_0(U_1)}=  (u, |\det d(f^{-1})|(\phi \circ f^{-1}))_{\mathcal{D}^\prime (U_2), C^\infty_0(U_2)}.
\end{equation}

\bigskip

In an arbitrary manifold $M$, there is not a canonic way of integrating. As a consequence, there is no natural embedding of a smooth function $u$ in the dual of $C^\infty_0(M)$ via $\phi \to \int u\phi$,  since $\int u \phi$ does not have an invariant meaning. The space $\mathcal{D}^\prime (M)$ of distributions on $M$ is instead defined by gluing together distributions $u_\kappa \in \mathcal{D}^\prime (\kappa(U_\kappa))$, given in local coordinates $(U_\kappa, \kappa )$, such that
\begin{equation}
\label{eq_trans_law_distrib}
    u_{\kappa^\prime}=(\kappa \circ \kappa^{\prime -1})^* u_\kappa \textnormal{ in } \kappa^\prime(U_\kappa \cap U_{\kappa^\prime}),
\end{equation}
holds on an overlapping chart.

As for the dual of $C^\infty_0(M)$, it can be identified with the space of~\textit{distribution densities} on $M$. Those are obtained by gluing together distributions $u_\kappa \in \mathcal{D}^\prime (\kappa(U_\kappa))$, given in local coordinates $(U_\kappa, \kappa )$, but satisfying a different transformation law
\begin{equation}
\label{eq_trans_law_density}
    u_{\kappa^\prime}=|\det d \psi| \psi ^* u_\kappa \textnormal{ in } \kappa^\prime(U_\kappa \cap U_{\kappa^\prime}), \quad \psi=\kappa \circ \kappa^{\prime -1},
\end{equation}
on an overlapping chart. Now given $u$ a distribution density on $M$ and $\phi \in C^\infty_0 (U_\kappa)$ supported on a coordinate neighborhood, we define the pairing
\begin{equation}
    (u, \phi)=(u_\kappa, \phi \circ \kappa^{-1})_{\mathcal{D}^\prime(\kappa(U_\kappa)), C^\infty_0 (\kappa(U_\kappa))},
\end{equation}
and this can be extended to any $\phi \in C^\infty_0(M)$ by using a partition of unity subordinate to a system of coordinates.  The transformation law~\eqref{eq_trans_law_density} ensures that $(u, \phi)$ is well defined. Indeed, we consider an overlapping chart $U_\kappa \cap U_{\kappa^\prime}$ and we write $U_1=\kappa^\prime (U_\kappa \cap U_{\kappa^\prime})$ and $U_2=\kappa (U_\kappa \cap U_{\kappa^\prime})$. Using~\eqref{eq_trans_law_density} and the pullback expression~\eqref{eq_pull_diffeo} we find
\begin{align}
    &(u_{\kappa^\prime}, \phi \circ \kappa^{\prime -1})_{\mathcal{D}^\prime(U_1), C^\infty_0 (U_1)}=(|\det d \psi| \psi ^* u_\kappa,\phi \circ \kappa^{\prime -1} )_{\mathcal{D}^\prime(U_1), C^\infty_0 (U_1)} \\
    &=( \psi ^* u_\kappa,|\det d \psi| (\phi \circ \kappa^{\prime -1} ))_{\mathcal{D}^\prime(U_1), C^\infty_0 (U_1)}\\&
    =( u_\kappa,|\det d \psi^{-1}| ( |\det d \psi| \circ \psi^{-1} )(\phi \circ \kappa^{ -1} ))_{\mathcal{D}^\prime(U_2), C^\infty_0 (U_2)} =(u_{\kappa}, \phi \circ \kappa^{-1})_{\mathcal{D}^\prime(U_2), C^\infty_0 (U_2)}.
\end{align}

We notice finally that if $\phi$ is a smooth compactly supported density and $u$ a distribution then $\phi u$ is a distribution density.  This identifies $\mathcal{D}^\prime (M)$ with the dual of smooth compactly supported densities by associating to a distribution $u$ the functional $\phi \to (u \phi, \chi )$ with $\chi=1$ on the support of $\phi$.

\bigskip

Now suppose $\mu$ is a fixed given density on $M$. The map
\begin{equation}
    u \to \mu u,
\end{equation}
allows to identify distributions with distributions densities and functions with densities. In particular, in a semi-Riemannian manifold $(M, g)$ endowed with its canonical density $\mu= |\mathrm{vol}_g| = \sqrt{|\det(g_{ij})|}\, |dx^1 \wedge \cdots \wedge dx^n|$ we have the identification $\mathcal{D}^\prime (M) = (C^\infty_0(M))^\prime$ by identifying a distribution $T \in  \mathcal{D}^\prime (M)$ with the functional $\phi \to (T, \phi \mu)$. In other words, once a density has been fixed on $M$ we have for $T \in  \mathcal{D}^\prime (M)$ the following dictionary
\begin{equation}
    (T, \phi)_{\mathcal{D}^\prime (M), C^\infty_0(M)}=(T , \phi \mu)=(\mu T, \phi).
\end{equation}
In the second equality $T$ is interpreted  as an element of the dual space of smooth compactly supported densities. In the last one we have a pairing between a distribution density and a smooth compactly supported function.

Now consider $(M,g)$ a semi-Riemannian manifold with its canonical density $\mu$ and $P$ a differential operator acting on functions $P: C^\infty(M) \to C^\infty(M)$. The action of $P$ extends to densities via conjugation by $\mu$. That is, for a density $\omega $ we define the density $P \omega$ as
\begin{equation}
    P \omega= \mu P(\mu^{-1}\omega),
\end{equation}
where $\mu^{-1} \omega$ is a smooth function on $M$.

Unwinding the definitions and identifications above we see that given a distribution $T \in \mathcal{D}^\prime(M)$ the distribution $PT \in \mathcal{D}^\prime (M) $ is well defined and acts on functions $\phi \in C^\infty_0(M)$ by
\begin{equation}
    (PT, \phi)=(T, P^*_\mu \phi),
\end{equation}
where the operator $P^*_\mu$ is the formal adjoint of $P$ for the density $\mu$, given by
$$
\int (Pf) g \mu = \int f (P^*_\mu g) \mu, \textnormal{ for } f,g \in C^\infty_0(M).
$$
In particular, one has the expected identity for the Laplace--Beltrami operator on $M$ and a distribution $T \in \mathcal{D}^\prime(M)$
\begin{equation}
    (\bg T,\phi)=(T, \bg \phi).
\end{equation}

\medskip

Now assume that $M$ is a Lorentzian manifold, let $\mathcal O\subset M$ be open and let $\phi\in C^\infty(\mathcal O)$ be real with $d\phi \neq 0 $ on $\n=\{\phi=0\}$. We define the pullback distribution $\delta(\phi)=\phi^*(\delta_0) \in \mathcal{D}^\prime(\mathcal O) $ where $\delta_0$ is the Dirac distribution on the real line. This is a well defined distribution on $\mathcal O$, and as such acts on compactly supported densities. By using the natural density of $M$, we can see $\delta(\phi)$ as a functional on $C^\infty_0(\mathcal O)$.

This object is well defined, but it is interesting to understand it in a more geometrically transparent way. More precisely, one would like to correlate this distribution with a notion of surface measure on $\n$. However, there is not a canonic way of integrating on a null hypersurface on a Lorentzian manifold, since the induced metric on $\n$ is degenerate. We can nevertheless relate $\delta(\phi)$ to a~\textit{Riemannian} surface measure. Indeed, we introduce an auxiliary Riemannian metric $h$ having the same density as $g$. Such a metric $h$ is given in Lemma~\ref{lem_aux_riem_metric}. Then, one can show as in~\cite[Theorem 6.1.5]{hormander1990} that in fact
\begin{equation}
    (\delta(\phi), w)=\int_\n \frac{w}{|\nabla_h \phi|_h} d \sigma_h,
\end{equation}
 where $d\sigma_h$ is the induced surface measure on $\n$ coming from \textit{the Riemannian metric } $h$. In other terms, we have the relationship
 \begin{equation}
     \delta(\phi)=\frac{d \sigma_h}{|\nabla_h \phi|_h}.
 \end{equation}

 We define as well the Heaviside function $H(\phi)$ associated to $\phi$ as $H(\phi)=\phi^*{H}$ where $H$ is the Heaviside function on $\R$. Using the chain rule~\eqref{eq_chain_rule} we get the expected relationship in coordinates between $\delta$ and $H$
 \begin{equation}
     \partial_jH(\phi)=\partial_j \phi\, \delta(\phi).
 \end{equation}

\bibliographystyle{alpha}
\bibliography{master,lorentzian_extra}

\end{document}